\title{Model category structures on simplicial objects}
\date{May 2021}
\author{Fritz H\"ormann\\ Mathematisches Institut, Albert-Ludwigs-Universit\"at Freiburg}
\newcommand{\comment}[1]{}
\newtheorem{SATZ}{Theorem}[section]
\newtheorem{LEMMA}[SATZ]{Lemma}
\newtheorem{DEF}[SATZ]{Definition}
\newtheorem{PROP}[SATZ]{Proposition}
\newtheorem{BEISPIEL}[SATZ]{Example}
\newtheorem{KOR}[SATZ]{Corollary}
\newtheorem{BEM}[SATZ]{Remark}
\newtheoremstyle{bare}        % name
  {}            % Space above, empty = `usual value'
  {}            % Space below
  {\normalfont}                 % Body font (\normalfont)
  {}                            % Indent amount (empty = no indent, \parindent = para indent)
  {\bfseries}                   % Thm head font
  {}                            % Punctuation after thm head
  {.0em}                           % Space after thm head: " " = normal interword space;
\theoremstyle{bare}
\newtheorem{PAR}[SATZ]{}
\newcommand{\N}{ \mathbb{N} }
\newcommand{\DD}{ \mathbb{D} }
\DeclareMathOperator{\hocolim}{hocolim}
\DeclareMathOperator{\hocolimshort}{c}
\DeclareMathOperator{\holim}{holim}
\DeclareMathOperator{\proj}{proj}
\DeclareMathOperator{\inj}{inj}
\DeclareMathOperator{\eff}{eff}
\DeclareMathOperator{\spl}{split}
\DeclareMathOperator{\Fun}{Fun}
\DeclareMathOperator{\Fib}{Fib}
\DeclareMathOperator{\Cof}{Cof}
\DeclareMathOperator{\Tfib}{Tfib}
\DeclareMathOperator{\Tcof}{Tcof}
\DeclareMathOperator{\Ex}{Ex}
\DeclareMathOperator{\Mor}{Mor}
\DeclareMathOperator{\colim}{colim}
\DeclareMathOperator{\id}{id}
\DeclareMathOperator{\coker}{coker}
\DeclareMathOperator{\Hom}{Hom}
\DeclareMathOperator{\Dia}{Dia}
\DeclareMathOperator{\op}{op}
\DeclareMathOperator{\pr}{pr}
\DeclareMathOperator{\Cat}{Cat}
\newcommand*\ccircled[1]{\tikz[baseline=(char.base)]{
            \node[shape=circle,draw,inner sep=2pt] (char) {#1};}}
\newcommand{\tw}[1]{ {{}^{\downarrow \uparrow} #1 }}
\begin{document}

\maketitle

{\footnotesize  {\em 2020 Mathematics Subject Classification:} 18N40, 18N50, 55U35 }

{\footnotesize  {\em Keywords:} model categories, simplicial objects, higher stacks }

\section*{Abstract}
A general method for lifting weak factorization systems in a category $\mathcal{S}$ to model category structures on simplicial objects in $\mathcal{S}$ is described, analogously to the
lifting of cotorsion pairs in Abelian categories to model category structures on chain complexes. 
This generalizes Quillen's original treatment of (projective) model category structures on simplicial objects. As a new application we show that there is
always a model category structure on simplicial objects in the coproduct completion of any (even large) category $\mathcal{S}$ with finite limits, which --- if $\mathcal{S}$ is small --- defines the same
homotopy theory as any global model category of simplicial pre-sheaves on $\mathcal{S}$. If $\mathcal{S}$ is extensive, a similar model category structure exists on simplicial objects in $\mathcal{S}$ itself. In any case the associated left derivator exists on all diagrams despite the lack of colimits in the original category.

\tableofcontents

\section{Introduction}

It is well known that there exist several (global\footnote{i.e.\@ in which weak equivalences are the section-wise weak equivalences}) simplicial model category structures on the category $\mathcal{SET}^{\Delta^{\op} \times \mathcal{S}^{\op}}$ of simplicial pre-sheaves on a small category $\mathcal{S}$. Sometimes it is desirable to
have similar structures also on simplicial objects in $\mathcal{S}$ or at least in $\mathcal{S}^\amalg$ (free coproduct completion). In this article we show that a 
corresponding simplicial model category structure on $\mathcal{S}^{\amalg, \Delta^{\op}}$ exists regardless of whether $\mathcal{S}$ is small. 
For this it is sufficient that $\mathcal{S}$ has finite limits. 
The corresponding homotopy theories (as $\infty$-categories, categories with weak equivalences, or derivators) are
equivalent to those of simplicial pre-sheaves if $\mathcal{S}$ is small. 
There exists also a similar structure on $\mathcal{S}^{\Delta^{\op}}$ itself (without involving the coproduct completion) if $\mathcal{S}$ is (infinitary) extensive and thus has all coproducts and every object is a coproduct of $\N$-small objects. It is equivalent to the left Bousfield localization at the coproduct covers of the model category $\mathcal{S}^{\amalg, \Delta^{\op}}$. 

The author thanks Nicola Gambino for pointing out  that a similar result has been obtained independently in \cite{GHSS21}.

The article starts with a general discussion of model category structures on simplicial objects in a category, reinterpreting and slightly generalizing  Quillen's original treatment \cite{Qui67}. The method is a general machinery to ``lift'' a given weak factorization system on $\mathcal{S}$ to a simplicial model category structure on $\mathcal{S}^{\Delta^{\op}}$, similarly to the ``lifting'' of cotorsion pairs in an Abelian category to model category structures on chain complexes \cite{Hov07}. We concentrate mostly on the easier situation of ``projective cases'', i.e.\@ roughly those cases in which the left class $\mathcal{L}$ of the underlying weak factorization system is generated by morphisms of the form $\emptyset \rightarrow P$, where the objects $P$ are (in a subclass of) the projectives for the weak factorization system.
The aforementioned model category structures on $\mathcal{S}^{\amalg, \Delta^{\op}}$, and $\mathcal{S}^{\Delta^{\op}}$, respectively, arise from the weak factorization systems
$(\mathcal{L}_{\proj,\spl}, \mathcal{R}_{\proj,\spl})$ in which $\mathcal{L}_{\proj,\spl}$ is the class of coproduct injections and $\mathcal{R}_{\proj,\spl}$ is the class of split surjections. 
 
In the end we show that, despite the lack of all colimits, all homotopy colimits, and all homotopically finite homotopy limits exist.
More generally, we get an associated left derivator on {\em all} small categories, and a right derivator on homotopically finite diagrams.

\section{Extensive categories}
Extensive categories are categories in which coproducts and pull-backs interact nicely. 

\begin{DEF}\label{DEFEXTENSIVE}
A category is {\bf (infinitary) extensive} if it has all coproducts and pull-backs of coproduct injections, and coproducts are disjoint and stable under pull-back. 
\end{DEF}
We will usually drop the adjective infinitary as we will not consider finitely extensive categories. 

\begin{DEF}\label{DEFFREECOPRODUCTCOMPLETION}
Let $\mathcal{S}$ be a category.
The {\bf free coproduct completion} $\mathcal{S}^\amalg$ is defined as the category of pairs $(X, (S_x)_{x \in X})$ where $X$ is a set and the $S_x$ are objects of $\mathcal{S}$ with
obvious morphisms. Denote by $U$ the full embedding $S \mapsto (\cdot, (S))$ from $\mathcal{S}$ into its coproduct completion. 
\end{DEF}

We have the following:
\begin{PROP}\label{PROPEXTENSIVE}
\begin{enumerate}
\item If  limits of some shape $I \in \Cat$ exist in $\mathcal{S}$ then they do also exist in $\mathcal{S}^\amalg$.
\item The free coproduct completion is extensive.
\item If $\mathcal{S}$ has all coproducts then $U$ has a left adjoint $\amalg$ given by 
\[ (X, (S_x)_{x \in X}) \mapsto \coprod_{x \in X} S_x. \]
\item If $\mathcal{S}$ is extensive we have
\[ \Hom_{\mathcal{S}}(S, \amalg T) = \colim_{\substack{S \cong \amalg S' 
}} \Hom_{\mathcal{S}^\amalg}(S', T) \]
\item $\mathcal{S}$ is extensive if and only if $\mathcal{S}$ has all coproducts, pull-backs of coproduct injections, and $\amalg$ commutes with finite limits. 
\end{enumerate}
\end{PROP}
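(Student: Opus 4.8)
The plan is to note first that two of the three conditions on the right --- existence of all coproducts and of pullbacks of coproduct injections --- appear verbatim in Definition \ref{DEFEXTENSIVE}, so the only content is the equivalence, under these standing hypotheses, between the remaining clauses of extensivity (coproducts disjoint and stable under pullback) and the preservation of finite limits by $\amalg$. Since $\amalg$ exists as a left adjoint to $U$ by part (3), and a functor preserves finite limits as soon as it preserves terminal objects and pullbacks, I would work throughout with the explicit description of limits in $\mathcal{S}^\amalg$ from part (1): the pullback of $(Z,(R_z)) \to (X,(S_x)) \leftarrow (Y,(T_y))$ has index set $Z \times_X Y$ and object $R_z \times_{S_x} T_y$ at $(z,y)$, and exists precisely when the corresponding pullbacks exist in $\mathcal{S}$. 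Applying $\amalg$, preservation of such a pullback is exactly the distributive identity $(\coprod_z R_z) \times_{\coprod_x S_x} (\coprod_y T_y) \cong \coprod_{(z,y) \in Z\times_X Y} R_z \times_{S_x} T_y$. I read ``$\amalg$ commutes with finite limits'' as preserving those finite limits that exist in $\mathcal{S}^\amalg$, equivalently those whose pointwise limits exist in $\mathcal{S}$, so that no finite completeness of $\mathcal{S}$ is presupposed.

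For the direction $(\Leftarrow)$ I would extract disjointness from the injections $\iota_x \colon U(S_x) \to (X,(S_x))$, which are the genuine coproduct injections in $\mathcal{S}^\amalg$. The pullback of $\iota_x$ with itself has one-point index set and object $S_x$, while the pullback of two distinct injections has empty index set and is therefore the initial object $(\emptyset,())$; applying $\amalg$ and invoking the hypothesis yields that each coproduct injection is a monomorphism and that distinct injections have initial intersection, i.e. disjointness.

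The subtle clause is stability under pullback, and this is where I expect the main obstacle, because an arbitrary morphism $f \colon T \to \coprod_x S_x$ is \emph{not} the image under $\amalg$ of any morphism of $\mathcal{S}^\amalg$ (such a morphism out of $U(T)$ would be forced to factor through a single injection). The idea that resolves this is to route $f$ through the unit of the adjunction. As $U$ is a full embedding the counit is invertible, so by the triangle identity $\amalg\eta$ is an isomorphism, and the unit $\eta \colon (X,(S_x)) \to U(\coprod_x S_x) = (\cdot,(\coprod_x S_x))$ has the coproduct injections as components. I would then form in $\mathcal{S}^\amalg$ the pullback of $(X,(S_x)) \xrightarrow{\eta} U(\coprod_x S_x) \xleftarrow{U(f)} U(T)$: by the formula above its index set is $X$ and its object at $x$ is $S_x \times_{\coprod_x S_x} T = T_x$, the pullback of $f$ along $\iota_x$, which exists by hypothesis, so the pullback is $(X,(T_x))$. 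Applying $\amalg$, preservation gives $\coprod_x T_x \cong (\coprod_x S_x) \times_{\coprod_x S_x} T$; since $\amalg\eta$ is an isomorphism the right-hand factor drops out and $\coprod_x T_x \cong T$, which is exactly stability under pullback.

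For the converse $(\Rightarrow)$, existence of coproducts and of pullbacks of injections is again part of Definition \ref{DEFEXTENSIVE}, and $\amalg$ preserves the initial object (being a left adjoint) and sends the terminal object $(\cdot,(1))$ to $1$ whenever one exists; it remains only to verify the displayed distributive identity from disjointness and stability. Here I would decompose each of $\coprod_z R_z$ and $\coprod_y T_y$ over $\coprod_x S_x$ using stability, identify the fibre over $S_x$ with $\coprod_{z \colon \bar p(z)=x} R_z$ by disjointness, and reassemble, distributing the pullback over the coproduct once more. This half is routine and the easier of the two; the genuine work, and the one nonformal insight, is the unit trick in the universality step above.
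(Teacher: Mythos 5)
Your argument for part 5 is correct in substance but takes a genuinely different route from the paper's. The paper deduces the forward direction from part 4: for extensive $\mathcal{S}$ the category of decompositions $(\mathcal{S}^{\amalg}\times_{/^\sim \mathcal{S}}\{S\})$ is cofiltered, so $\Hom(S,\amalg\lim_j F(j))$ is a filtered colimit of the sets $\lim_j\Hom(S',F(j))$, and one concludes from the commutation of filtered colimits with finite limits in $\mathcal{SET}$ together with Yoneda; the converse direction is only asserted to ``follow exploiting the commutation with suitable pull-backs'', which is essentially the computation you carry out explicitly. Your direct verification --- in particular the unit trick, pulling $U(f)$ back along $\eta\colon(X,(S_x))\to U(\coprod_x S_x)$ and applying $\amalg$ to manufacture the decomposition $T\cong\coprod_x T_x$ --- is a clean, self-contained replacement, and your derivation of disjointness from the preserved pullbacks of the injections $U(S_x)\to(X,(S_x))$ is fine. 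What the paper's route buys is uniformity over arbitrary finite shapes $J$: your reduction of ``commutes with finite limits'' to ``preserves terminal objects and pullbacks'' is automatic only in a finitely complete category, whereas here a finite limit can exist in $\mathcal{S}^\amalg$ (equivalently, pointwise in $\mathcal{S}$) without the auxiliary products and pullbacks needed for that reduction existing in $\mathcal{S}$; the filtered-colimit argument sidesteps this entirely.

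The one genuine omission is that your proposal proves only part 5. Parts 1--3 are dismissed as easy by the paper as well, but part 4 --- the formula $\Hom_{\mathcal{S}}(S,\amalg T)=\colim_{S\cong\amalg S'}\Hom_{\mathcal{S}^\amalg}(S',T)$ --- is the clause the paper actually proves in detail (via the canonical decomposition of a morphism $S\to\amalg T$ that extensivity induces, giving $f=\amalg f'$ for a unique $f'$) and the one that is used later, in Lemma~\ref{LEMMASCOPROD}. You use essentially that same decomposition implicitly in your stability step, so the missing proof is within reach, but as written it is absent and should be supplied.
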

\begin{proof}
1.\@--3.\@ are easy. 

4.\@
The colimit goes over the category $(\mathcal{S}^{\amalg} \times_{/^\sim \mathcal{S}} \{ S \})$ of pairs $(S', \mu: \amalg S' \cong S)$.  
There is a map from the right hand side to the left hand side given by the maps $\Hom(S', T) \rightarrow \Hom(\amalg S', \amalg T)$ and composition with $\mu$.
Since $\mathcal{S}$ is extensive, for any morphism $f: S \rightarrow \amalg T$, we get an induced decomposition $S \cong \amalg S'$ 
 such that $f = \amalg f'$
for a unique $f': S' \rightarrow T$. 
One checks that these maps are inverse to each other. 

5.\@ For an extensive category the category $(\mathcal{S}^{\amalg} \times_{/^\sim \mathcal{S}} \{ S \})$ is cofiltered. Hence for a diagram $F: J \rightarrow \mathcal{S}^{\amalg}$ we have
\[ \Hom(S, \lim_j \amalg F(j)) = \lim_j \Hom(S, \amalg F(j)) = \lim_j \colim_{S \cong \amalg S'} \Hom(S', F(j))  \]
\[  =\colim_{S \cong \amalg S'} \lim_j \Hom(S', F(j)) =  \colim_{S \cong \amalg S'}  \Hom(S', \lim_j F(j))  = \Hom(S, \amalg \lim_j F(j))  \] 
because filtered colimits commute with finite limits in $\mathcal{SET}$. We conclude by Yoneda's Lemma. 
The converse follows exploiting the commutation with suitable pull-backs. 
\end{proof}

\section{Weak factorization systems}

\begin{PAR}
Let $\mathcal{A},  \mathcal{B},  \mathcal{C}$ be categories and 
$F: \mathcal{A} \times \mathcal{B} \rightarrow \mathcal{C}$ a functor. 
For morphisms $f: X \rightarrow Y$ in $\mathcal{A}$ and $g: Z \rightarrow W$ in $\mathcal{B}$ we write
\[ \xymatrix{  F(X,W) \amalg_{F(X,Z)} F(Y,W) \ar[rr]^-{\boxplus F(f, g)}& &  F(W, Z)}  \]
and
\[ \xymatrix{ F(X, Y) \ar[rr]^-{\boxdot F(f, g)} & &  F(Y, Z) \times_{F(Y, W)} F(X, Z) }   \]
and do so similarly for functors of more variables, if the corresponding push-out, resp.\@ pull-back exists. 
\end{PAR}

\begin{DEF}
Let $\mathcal{C}$ be a category. We say that a morphism $\lambda$ has the left lifting property w.r.t.\@ a morphism $\rho$, or equivalently, 
that $\rho$ has the right lifting property w.r.t.\@ $\lambda$, denoted
\[ \lambda \Box \rho \]
if for any commutative diagram
\[ \xymatrix{ A \ar[r] \ar[d]_{\lambda}  & B \ar[d]^\rho \\ C \ar@{.>}[ru]^\sigma \ar[r] & D } \]
there is a lift $\sigma$ making the triangles commute or, in other words, if
$\boxdot \Hom(\lambda^{\op}, \rho)$
is surjective. 
For a subclass $\mathcal{M}$ of morphisms we denote
\[ {}^\Box \mathcal{M} := \{ f \in \Mor(\mathcal{C})\  |\ f \Box g \ \forall g \in \mathcal{M}  \}, \]
\[  \mathcal{M}^\Box := \{ g \in \Mor(\mathcal{C})\  |\ f \Box g \ \forall f \in \mathcal{M}  \}. \]
\end{DEF}

\begin{LEMMA}Let $\mathcal{C}_1, \dots, \mathcal{C}_n, \mathcal{D}_1, \dots, \mathcal{D}_m$ and $\mathcal{E}$ be categories, let 
 $F$ be a functor $\mathcal{C}_1 \times  \cdots \times \mathcal{C}_n \rightarrow \mathcal{D}_i$, and let $G$ be a functor $\mathcal{D}_1 \times  \cdots \times   \mathcal{D}_m \rightarrow \mathcal{E}$. Then
\[ \boxplus G(g_1, \dots, \boxplus F(f_1, \dots, f_n), \dots, g_m) = \boxplus (G \circ^i F) (g_1, \dots, f_1, \dots, f_n, \dots, g_m).  \]
\end{LEMMA}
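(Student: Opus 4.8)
The plan is to reduce the identity to the associativity of colimits by giving each iterated $\boxplus$ a uniform description as a single map out of a punctured cube. Write $f_j\colon X_j \to Y_j$ and $g_k \colon A_k \to B_k$. For a functor $H\colon \mathcal{A}_1 \times \cdots \times \mathcal{A}_k \to \mathcal{B}$ and morphisms $h_j \colon X_j \to Y_j$, consider the $k$-cube diagram indexed by the poset $\{0,1\}^k$ sending $\varepsilon \mapsto H(Z^\varepsilon_1, \dots, Z^\varepsilon_k)$, where $Z^\varepsilon_j = X_j$ for $\varepsilon_j = 0$ and $Z^\varepsilon_j = Y_j$ for $\varepsilon_j = 1$. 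First I would show, by induction on $k$, that
\[ \boxplus H(h_1, \dots, h_k) \colon \colim_{\varepsilon \neq (1,\dots,1)} H(Z^\varepsilon) \longrightarrow H(Y_1, \dots, Y_k) \]
is the canonical map from the colimit of the cube with its terminal vertex removed to that terminal vertex. For $k=1$ both sides are $H(h_1)$, for $k=2$ this is the defining pushout of $\boxplus$, and the inductive step peels off $h_k$ and applies the pasting law for pushouts to rewrite the nested pushout as one colimit over the larger punctured cube. Note that this identification is purely formal: it only combines colimits of the values of $H$ itself.

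With this in hand, the right-hand side of the claim is, by definition, the canonical map attached to the $(m-1+n)$-cube of $G \circ^i F$ in the variables $g_1, \dots, f_1, \dots, f_n, \dots, g_m$, with terminal vertex $(G \circ^i F)(B_1, \dots, Y_1, \dots, Y_n, \dots, B_m)$. For the left-hand side I would expand the $m$-cube of $\boxplus G$ and insert into its $i$-th slot the punctured-$n$-cube description of $\boxplus F(f_1, \dots, f_n)$ from the first step. The Fubini theorem for colimits --- an iterated colimit over a product of posets agrees with the colimit over the product --- then reorganises the resulting nested colimit into a single colimit indexed by the full $(m-1+n)$-cube with its terminal vertex deleted. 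The two terminal vertices visibly coincide, so it remains only to identify the two sources.

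The main obstacle, and the one genuinely new point, is this last identification. The source of $\boxplus F(f_1, \dots, f_n)$ is the nontrivial colimit $P := \colim_{\tau \neq (1,\dots,1)} F(Z^\tau)$, and the left-hand side applies $G$ to $P$ in its $i$-th argument, producing vertices of the form $G(\dots, P, \dots)$; the reorganised big cube instead carries the vertices $G(\dots, F(Z^\tau), \dots)$. Matching the two therefore requires that $G$ send $P$, and the analogous latching colimits at the neighbouring corners, to the corresponding colimits in $\mathcal{E}$ --- equivalently, that the relevant pushouts be preserved by $G$ in its $i$-th variable. This holds whenever the functors are cocontinuous in each variable separately, which is the case in all intended applications, where the inner functors are of tensor type. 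Granting this, the comparison maps between the nested and the total colimit are isomorphisms compatible with the structure maps, and the two sides agree; what remains is the routine combinatorial check that the edge maps of the two cube diagrams coincide.
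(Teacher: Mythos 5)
The paper states this lemma without proof, treating it as a purely formal identity, so there is no argument of the author's to compare yours against; judged on its own, your punctured-cube formalization of $\boxplus$ is the standard and correct way to make the statement precise. The one substantive point --- and you have correctly isolated it --- is that the identity as literally stated is not true for an arbitrary functor $G$: already for $m=1$, $n=2$ it reduces to the assertion that $G$ carries the pushout $F(X_1,Y_2)\amalg_{F(X_1,X_2)}F(Y_1,X_2)$ to the corresponding pushout in $\mathcal{E}$, which fails in general. What is needed is exactly what you identify: $G$ must preserve, in its $i$-th variable, the punctured-cube colimit that is the source of $\boxplus F(f_1,\dots,f_n)$ (note that this single object is in fact the only nontrivial colimit that has to be preserved, appearing once for each choice of sources and targets of the remaining $g_k$; the ``neighbouring corners'' you mention all carry either this object or a plain value of $F$). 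In the paper this hypothesis is left implicit because the lemma is only ever invoked, in \ref{PARADJUNCTION} and \ref{PARWFSFUNCTORIALITY}, for adjunctions in several variables and for $\Hom$ in the dual $\boxdot$ form, where the required (co)continuity in each separate variable is automatic. Granting it, your pasting-law/Fubini reorganization of the iterated colimit into the single colimit over the punctured $(m-1+n)$-cube is correct, and the remaining verification that the comparison isomorphisms commute with the edge maps is indeed routine. So: the proof is sound, and it has the additional merit of making explicit a (co)continuity hypothesis that the paper elides.
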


\begin{PAR}\label{PARADJUNCTION}
If $F$ is a left adjoint of two variables with right adjoints $G_1, G_2$ then
applying the above to the compositions $\Hom(F(-,-),-) = \Hom(-,G_1(-,-)) = \Hom(-,G_2(-,-))$, we get
\[ \boxdot \Hom((\boxplus F(f, g))^{\op}, h) = \boxdot \Hom(f^{\op}, \boxdot G_1(g^{\op}, h)) = \boxdot \Hom(g^{\op}, \boxdot G_2(f^{\op}, h))  \]
In particular 
\[ \boxplus F(f, g) \Box h  \ \Leftrightarrow \  f \Box  \boxdot G_1(g^{\op}, h) \  \Leftrightarrow \ g \Box \boxdot G_2(f^{\op}, h). \] 
\end{PAR}

\begin{DEF}
A {\bf weak factorization system} on a category $\mathcal{C}$ consists of two classes $\mathcal{L}$ and $\mathcal{R}$ of morphisms of $\mathcal{C}$ such that
\begin{enumerate}
\item $\lambda \Box \rho$ for all $\lambda \in \mathcal{L}$ and $\rho \in \mathcal{R}$.
 \item 
Every morphism $f$ in $\mathcal{C}$ has a  factorization 
\[ f = \rho \circ \lambda \]
with $\lambda \in \mathcal{L}$ and $\rho \in \mathcal{R}$.
\item The following equivalent conditions (assuming 1.\@ and 2.) hold:
\begin{enumerate}
\item  $\mathcal{L}$ and $\mathcal{R}$ are closed under retracts;
\item $\mathcal{L} = {}^\Box \mathcal{R}$ and $\mathcal{R} = \mathcal{L}^\Box$.  
\end{enumerate}
\end{enumerate}
\end{DEF}

\begin{PAR}
An object $P$ which has the left lifting property w.r.t.\@ $\mathcal{R}$, i.e.\@ such that every diagram
\[ \xymatrix{& X \ar[d]^{\in \mathcal{R}} \\
P \ar[r] \ar@{.>}[ru] & Y  }\]
has a lifting as indicated, is called a {\bf projective} w.r.t.\@ $\mathcal{R}$, or also a projective w.r.t.\@ the weak factorization system. 
If $\mathcal{C}$ has an initial object $\emptyset$ the projectives are the objects $P$ such that $\emptyset \rightarrow P$ is in $\mathcal{L}$. 
Similarly {\bf injectives} are defined. 
\end{PAR}

\begin{PAR}We say that a class $L \subset \mathcal{L}$ {\bf generates} $\mathcal{L}$ if $\mathcal{R} = L^\Box$. 
A weak factorization system is called {\bf left generated} if $\mathcal{C}$ is cocomplete and there is a {\em{}set} $L \subset \mathcal{L}$ consisting of morphisms with $\kappa$-small codomain which generates $\mathcal{L}$.
By the small object argument any such set of morphisms with $\kappa$-small codomain generates a weak factorization system with $\mathcal{L} = {}^\Box(L^\Box)$ and $\mathcal{R} = L^\Box$. 
Note that the class ${}^\Box(L^\Box)$ is then equivalently the class of retracts of transfinite compositions of pushouts of elements in $L$. 
\end{PAR}

\begin{PAR}\label{PARWFSFUNCTORIALITY}
Weak factorization systems $(\mathcal{C}, \mathcal{L}, \mathcal{R})$ form a 2-multicategory in which multimorphisms 
\[ \Hom((\mathcal{C}_1, \mathcal{L}_1, \mathcal{R}_1), \dots,  (\mathcal{C}_n, \mathcal{L}_n, \mathcal{R}_n); (\mathcal{C}_0, \mathcal{L}_0, \mathcal{R}_0))   \]
are adjunctions in several variables $F \dashv G_1, \dots, G_n$ such that the equivalent conditions (by \ref{PARADJUNCTION})
\begin{enumerate}
\item[(a)] $\boxplus F(\mathcal{L}_1, \dots, \mathcal{L}_n) \subseteq \mathcal{L}_0$
\item[(b$_i$)] $\boxdot  G_i(\mathcal{L}_1, \overset{\widehat{i}}{\dots}, \mathcal{L}_n; \mathcal{R}_0) \subseteq \mathcal{R}_i$
\end{enumerate}
hold and such that 0-ary morphisms $\Hom(;(\mathcal{C}, \mathcal{L}, \mathcal{R}))$ are just objects in $\mathcal{L}$.

Axiom 1.\@ of weak factorization system implies that $\Hom: \mathcal{C}^{\op} \times \mathcal{C} \rightarrow \mathcal{SET}$ is such a functor (right variant), where
in $\mathcal{SET}$ is equipped with the weak factorization system (injections, surjections). It is part of a multimorphism
$\mathcal{SET}, \mathcal{C} \rightarrow \mathcal{C}$
 of weak factorization systems in the sense above if and only if $\mathcal{C}$ has all products and coproducts. 
\end{PAR}

\begin{PAR}[Transport of structure]\label{PARWFSTRANSPORT}
Let $(\mathcal{C}_i, \mathcal{L}_i, \mathcal{R}_i)$ for $i=1, \dots, n$ be weak factorization systems and
\[ F: \mathcal{C}_1 \times \dots \times \mathcal{C}_n \rightarrow \mathcal{C}_0    \]
be a functor with right adjoints $G_1, \dots, G_n$. We can transport the structures to $\mathcal{C}_0$ setting: 
\[ \mathcal{L}_0 := {}^\Box( \boxplus F(\mathcal{L}_1, \dots, \mathcal{L}_n)^\Box). \]
 Equivalently, we have 
 \[ \mathcal{R}_0 := \{ \rho \ | \  \boxdot G_i(\mathcal{L}_1, \overset{\widehat{i}}{\dots}, \mathcal{L}_n; \rho) \subset \mathcal{R}_i \} \] 
  for any $i$. 
 It is not clear whether the structure has factorizations and thus is a weak factorization system. 
 For $n=1$ there is a dual operation in which the structure is transported along the right adjoint. 
 \end{PAR}
  
\begin{PAR}If $(\mathcal{L},\mathcal{R})$ is a weak factorization system then  
it follows that $\mathcal{L}$ is closed under (transfinite) compositions and pushouts (as soon as they exist) and dually for $\mathcal{R}$. 
\end{PAR}

In a locally presentable category any {\em set} of morphisms generates a weak factorization system.

\begin{PAR}[Basic examples]
In the category of sets we have the weak factorization system 
\begin{center}
\begin{tabular}{rllcrll}
$\mathcal{L}$ &=& injective maps, & & $\mathcal{R}$ &=& surjective maps.
\end{tabular}
\end{center}

This generalizes in (at least) 6 different ways to a general category:

\begin{center}
\begin{tabular}{rllrll}
$\mathcal{L}_{\inj}$ &=& monomorphisms & $\mathcal{R}_{\inj}$ &=& $(\mathcal{L}_{\inj})^{\Box}$ \\
$\mathcal{L}_{\inj,\eff}$ &=& effective monomorphisms &  $\mathcal{R}_{\inj, \eff}$ &=& retracts of $A \times B \rightarrow A$ w.\@ $B$ inj.  \\
$\mathcal{L}_{\inj,\spl}$ &=& split monomorphisms & $\mathcal{R}_{\inj, \spl}$ &=& retracts of $A \times B \rightarrow A$ \\
\\
\hline
\\
$\mathcal{L}_{\proj}$ &=& ${}^\Box (\mathcal{R}_{\proj})$ & $\mathcal{R}_{\proj}$ &=& epimorphisms \\
$\mathcal{L}_{\proj,\eff}$ &=& retracts of $A  \rightarrow A \amalg B$ w.\@ $B$ proj. & $\mathcal{R}_{\proj, \eff}$ &=& effective epimorphisms \\
$\mathcal{L}_{\proj,\spl}$ &=& retracts of $A \rightarrow A \amalg B$  & $\mathcal{R}_{\proj, \spl}$ &=& split epimorphisms 
\end{tabular}
\end{center}
These structures are in general not necessarily weak factorization systems because it is not clear that a factorization exists. 
We now collect some sufficient conditions for this: 
\end{PAR}

\begin{DEF}
We call a class of morphisms $\mathcal{R}$ {\bf right-saturated} if in a commutative diagram
\[ \xymatrix{ X \ar[r]^-{can} \ar[rd]_{f} & X \amalg Y \ar[d]^{f \amalg g} \\ 
& Z}\]
$f \in \mathcal{R}$ implies $f \amalg g \in \mathcal{R}$. Similarly, $\mathcal{L}$ is called {\bf left-saturated} if $f$ in $\mathcal{L}$ implies $f \times g \in \mathcal{L}$:
\[ \xymatrix{ X \ar@{<-}[r]^-{can} \ar@{<-}[rd]_{f} & X \times Y \ar@{<-}[d]^{f \times g} \\ 
& Z}\]
\end{DEF}

\begin{LEMMA}\label{LEMMAGENWFS}
Let $\mathcal{R}$ be a class of morphisms which is right-saturated and closed under retracts.
Assume that $\mathcal{C}$ has finite coproducts and enough projectives (i.e.\@ for any object $X$ there is a projective $P$ w.r.t.\@ $\mathcal{R}$ and a morphism $P \rightarrow X$ in $\mathcal{R}$). Then setting 
\[ \mathcal{L}:=\{\text{ retracts of } X \rightarrow X \amalg P \text{ with $P$ projective } \}\] 
the pair $(\mathcal{L}, \mathcal{R})$ is a weak factorization system
and $\mathcal{L}$ is generated by the class $\{\emptyset \rightarrow P\}$ with $P$ projective  w.r.t.\@ $\mathcal{R}$. We call such weak factorization systems {\bf of projective type}. 
 
There is an obvious dual statement whose formulation we leave to the reader. 
\end{LEMMA}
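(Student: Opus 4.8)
The plan is to verify the three axioms of a weak factorization system for $(\mathcal{L}, \mathcal{R})$ and then the generation statement, the whole argument resting on one structural observation: for any object $X$ and any projective $P$, the coproduct injection $X \to X \amalg P$ is the pushout of $\emptyset \to P$ along the unique map $\emptyset \to X$, since $X \amalg P$ is the pushout of $X \leftarrow \emptyset \to P$.

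First I would dispatch the lifting axiom. An object $P$ is projective w.r.t.\@ $\mathcal{R}$ precisely when $\emptyset \to P$ lies in ${}^\Box \mathcal{R}$. Since classes of the form ${}^\Box\mathcal{M}$ are always closed under pushouts and retracts, the observation above shows that every injection $X \to X \amalg P$, and hence every retract of such, lies in ${}^\Box\mathcal{R}$. Thus $\mathcal{L} \subseteq {}^\Box\mathcal{R}$, which is exactly $\lambda \Box \rho$ for all $\lambda \in \mathcal{L}$ and $\rho \in \mathcal{R}$. (Alternatively one checks this by a direct diagram chase: a lift out of $X \amalg P$ is forced on the $X$-summand by the given top map and produced on the $P$-summand by projectivity of $P$.)

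The heart of the proof is the factorization axiom, and this is where I expect the only genuine work. Given $f: X \to Y$, enough projectives furnishes a projective $P$ together with a map $p: P \to Y$ in $\mathcal{R}$. I factor $f$ as
\[ X \longrightarrow P \amalg X \xrightarrow{\ p \amalg f\ } Y, \]
where the left map is the coproduct injection of the second summand and $p \amalg f$ is the induced map; the relation $(p \amalg f)\circ \iota = f$ is immediate. The left map is, up to the symmetry of the coproduct, a generator $X \to X \amalg P$, hence lies in $\mathcal{L}$. The point is that $p \amalg f \in \mathcal{R}$, and this is exactly what right-saturation of $\mathcal{R}$ delivers, applied to $p \in \mathcal{R}$ on the first summand with $f$ on the second. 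Isolating this is the main obstacle, in the sense that right-saturation is visibly the hypothesis tailored to make this single step work; everything else is formal.

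Finally, for the retract axiom, $\mathcal{R}$ is closed under retracts by hypothesis, and $\mathcal{L}$ is closed under retracts because a retract of a retract of $X \to X \amalg P$ is again such a retract. This gives condition 3(a), so $(\mathcal{L},\mathcal{R})$ is a weak factorization system and in particular $\mathcal{R} = \mathcal{L}^\Box$. For the generation claim I must show $\mathcal{R} = \{\emptyset \to P\}^\Box$. The inclusion $\mathcal{R} \subseteq \{\emptyset \to P\}^\Box$ follows from $\{\emptyset \to P\} \subseteq \mathcal{L}$ together with $\mathcal{R} = \mathcal{L}^\Box$. Conversely, if $\rho \in \{\emptyset \to P\}^\Box$ then $\emptyset \to P \in {}^\Box\{\rho\}$ for every projective $P$; invoking again the pushout observation and closure of ${}^\Box\{\rho\}$ under pushouts and retracts, I obtain $\mathcal{L} \subseteq {}^\Box\{\rho\}$, i.e.\@ $\rho \in \mathcal{L}^\Box = \mathcal{R}$. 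The dual statement follows by reversing all arrows and replacing coproducts by products, projectives by injectives, and right-saturation by left-saturation.
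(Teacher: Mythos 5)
Your proof is correct and follows essentially the same route as the paper: lifting via projectivity of $P$, factorization as $X \to X \amalg P \to Y$ using enough projectives and right-saturation, and the generation claim via closure of ${}^\Box\{\rho\}$ under pushouts and retracts. The paper's own proof is just a three-sentence compression of exactly these steps, so your version simply supplies the details (notably the observation that $X \to X \amalg P$ is a pushout of $\emptyset \to P$) that the paper leaves implicit.
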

\begin{proof}
By definition of projective we have $\mathcal{L} \Box \mathcal{R}$. Furthermore, for a morphism $X \rightarrow Y$ choose a projective $P$ with morphism $P \rightarrow Y$ in $\mathcal{R}$. Then $X \rightarrow X \amalg P \rightarrow Y$ is a valid factorization by assumption. 
The additional statement is clear. 
\end{proof}

\begin{PROP}
Let $\mathcal{S}$ be a category with finite coproducts.  Then 
$(\mathcal{L}_{\proj,\spl}, \mathcal{R}_{\proj,\spl})$ is a weak factorization system. 

If $\mathcal{S}$ is extensive, then $\mathcal{L}_{\proj,\spl}$ is just the class of morphisms {\em isomorphic} to $X \rightarrow X \amalg Y$. 

Dually, let $\mathcal{S}$ be a category with finite products. Then $(\mathcal{L}_{\inj,\spl}, \mathcal{R}_{\inj,\spl})$ is a weak factorization system.
\end{PROP}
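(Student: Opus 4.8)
The plan is to derive the first statement from Lemma~\ref{LEMMAGENWFS}. The essential observation is that the projectives with respect to $\mathcal{R}_{\proj,\spl}$, the class of split epimorphisms, are \emph{all} objects of $\mathcal{S}$: if $\rho\colon X\to Y$ admits a section $s$, then for any $f\colon P\to Y$ the composite $s\circ f$ lifts $f$ through $\rho$. In particular $\mathcal{S}$ has enough projectives, since the identity $\id_X$ is a split epimorphism for every object $X$, exhibiting $X$ itself as a projective mapping onto $X$ by a morphism in $\mathcal{R}_{\proj,\spl}$.

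It then suffices to verify the remaining hypotheses of Lemma~\ref{LEMMAGENWFS} for $\mathcal{R}=\mathcal{R}_{\proj,\spl}$. Split epimorphisms are right-saturated: given $f\colon X\to Z$ with section $s$ and an arbitrary $g\colon Y\to Z$, the map $\mathrm{can}\circ s$ is a section of $f\amalg g$, because $(f\amalg g)\circ\mathrm{can}\circ s=f\circ s=\id_Z$. They are also closed under retracts, by conjugating a section of the ambient split epimorphism with the retract data. Lemma~\ref{LEMMAGENWFS} then produces a weak factorization system $(\mathcal{L},\mathcal{R}_{\proj,\spl})$ whose left class consists of the retracts of maps $X\to X\amalg P$ with $P$ projective; as every object is projective, this left class is precisely $\mathcal{L}_{\proj,\spl}$.

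For the extensive case I would show that every retract $\lambda\colon A\to B$ of a coproduct injection is itself isomorphic to one. Encode the retract by morphisms $j\colon B\to X\amalg Y$, $q\colon X\amalg Y\to B$ with $qj=\id_B$, and $i\colon A\to X$, $p\colon X\to A$ with $pi=\id_A$, compatible via $\mathrm{can}\,i=j\lambda$ and $q\,\mathrm{can}=\lambda p$, where $\mathrm{can}\colon X\to X\amalg Y$ is the injection. Using stability of coproducts under pull-back, decompose $B\cong B_X\amalg B_Y$ by pulling the two summands of $X\amalg Y$ back along $j$, with injections $\iota_X,\iota_Y$ and projections $\pi_X\colon B_X\to X$, $\pi_Y\colon B_Y\to Y$. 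Since $j\lambda=\mathrm{can}\,i$ factors through the summand $X$, the universal property of the pullback yields a unique $\lambda'\colon A\to B_X$ with $\iota_X\lambda'=\lambda$ and $\pi_X\lambda'=i$. Setting $\mu:=p\,\pi_X$ gives $\mu\lambda'=pi=\id_A$. On the other hand $j\iota_X=\mathrm{can}\,\pi_X$ together with $qj=\id_B$ gives $q\,\mathrm{can}\,\pi_X=\iota_X$; combined with $q\,\mathrm{can}=\lambda p=\iota_X\lambda' p$ this reads $\iota_X\lambda' p\,\pi_X=\iota_X$, and since coproduct injections are monomorphisms in an extensive category, $\lambda'\mu=\lambda' p\,\pi_X=\id_{B_X}$. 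Hence $\lambda'$ is an isomorphism and $\lambda\cong\iota_X$, a coproduct injection.

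The dual statement follows by applying the first claim to $\mathcal{S}^{\op}$: finite products in $\mathcal{S}$ are finite coproducts there, split monomorphisms are split epimorphisms, and projections $A\times B\to A$ are injections $A\to A\amalg B$, so the weak factorization system obtained on $\mathcal{S}^{\op}$ is exactly $(\mathcal{L}_{\inj,\spl},\mathcal{R}_{\inj,\spl})$ read back in $\mathcal{S}$. I expect the extensive case to be the only real obstacle; the delicate identity is $\lambda'\mu=\id_{B_X}$, which is precisely where both disjointness of coproducts (so that $\iota_X$ is monic) and their pull-back stability (providing the decomposition $B\cong B_X\amalg B_Y$) are needed.
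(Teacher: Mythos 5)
Your proof is correct and follows essentially the same route as the paper: the first and third parts via Lemma~\ref{LEMMAGENWFS} after observing that every object is projective for split epimorphisms and that these are right-saturated and retract-closed, and the extensive case by pulling back the coproduct decomposition along the retract data. Your treatment of the extensive case is in fact slightly more complete than the paper's, which kills the complementary summand by disjointness and then asserts the conclusion, whereas you explicitly exhibit the inverse $\mu=p\,\pi_X$ and verify $\lambda'\mu=\id_{B_X}$ using monicity of coproduct injections.
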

\begin{proof}
Every object is projective w.r.t.\@ $\mathcal{R}_{\proj,\spl}$ and obviously $\mathcal{R}_{\proj,\spl}$ is saturated and closed under retract. Hence $(\mathcal{L}_{\proj,\spl}, \mathcal{R}_{\proj,\spl})$ is a weak factorization system by Lemma~\ref{LEMMAGENWFS}. 

For the additional statement consider a retract diagram 
\[ \xymatrix{ A \ar[r] \ar[d]^\alpha & B \ar@{^{(}->}[d] \ar[r] & A \ar[d]^{\alpha}  \\  C \ar[r]_-{\sigma} & B \coprod E \ar[r]_-{\pi} & C } \]
If $\mathcal{C}$ is extensive, we get an isomorphic left square of the form
\[ \xymatrix{ \alpha^{-1}\sigma^{-1}B \coprod \alpha^{-1}\sigma^{-1}E \ar[r] \ar[r] \ar[d]^\alpha & B \ar@{^{(}->}[d]  \\  \sigma^{-1}B \amalg \sigma^{-1}E \ar[r] & B \amalg E  } \]
and the map $\alpha^{-1}\sigma^{-1}E \rightarrow E$ has to factor through $\emptyset$ and hence $\alpha^{-1}\sigma^{-1}E \cong \emptyset$.
Hence $\alpha$ is in $\mathcal{L}_{\proj,\spl}$.
\end{proof}

\begin{PROP}
Let $\mathcal{S}$ be a category with finite coproducts, pullbacks, and enough projectives w.r.t.\@ $\mathcal{R}_{\proj,\eff}$.
Then $(\mathcal{L}_{\proj,\eff}, \mathcal{R}_{\proj,\eff})$ is a weak factorization system on $\mathcal{S}$.

Dually, let  $\mathcal{S}$ be a category with finite products, pushouts, and enough injectives. Then $(\mathcal{L}_{\inj,\eff}, \mathcal{R}_{\inj,\eff})$ is a weak factorization system on $\mathcal{S}$. 
\end{PROP}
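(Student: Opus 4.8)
The plan is to deduce this from Lemma~\ref{LEMMAGENWFS} applied to $\mathcal{R}=\mathcal{R}_{\proj,\eff}$, the class of effective epimorphisms. The hypotheses on $\mathcal{S}$ supply finite coproducts and enough projectives directly, and the class $\mathcal{L}$ manufactured by that lemma is by definition exactly $\mathcal{L}_{\proj,\eff}$; so the whole proof reduces to verifying the two standing hypotheses of the lemma for $\mathcal{R}_{\proj,\eff}$, namely that it is closed under retracts and right-saturated. Throughout I would use that, since $\mathcal{S}$ has pullbacks, every morphism has a kernel pair, so an effective epimorphism is precisely a morphism $f\colon X\to Z$ that is the coequalizer of its own kernel pair $X\times_Z X\rightrightarrows X$; this is the characterization I would test against.

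For closure under retracts I expect only routine work. Given a retract of an effective epimorphism $f\colon X\to Z$ in the arrow category, with retraction data $i,r$ on sources and $j,s$ on targets (so $ri=\mathrm{id}$, $sj=\mathrm{id}$, $fi=jf'$, $f'r=sf$), I would take a test map $t$ coequalizing the kernel pair of $f'$, form $t\circ r$, and use the induced comparison $X\times_Z X\to X'\times_{Z'}X'$ of kernel pairs to check that $t\circ r$ coequalizes the kernel pair of $f$. Effectiveness of $f$ then factors $t\circ r$ through $f$, and composing with $j$ factors $t$ through $f'$; uniqueness holds because a retract of an epimorphism is an epimorphism.

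The main content, and the step I expect to be the obstacle, is right-saturation: given an effective epimorphism $f\colon X\to Z$ and an arbitrary $g\colon Y\to Z$, I must show $f\amalg g\colon X\amalg Y\to Z$ is again the coequalizer of its kernel pair $K\rightrightarrows X\amalg Y$. Let $t=(t_X,t_Y)\colon X\amalg Y\to V$ coequalize $K$. Restricting $t$ along the maps $X\times_Z X\to K$ and $Y\times_Z X\to K$ (which exist because the relevant squares commute over $Z$) shows that $t_X$ coequalizes the kernel pair of $f$, so $t_X=u\circ f$ for a unique $u\colon Z\to V$, and that $t_Y\circ\pi_Y=t_X\circ\pi_X=u\circ g\circ\pi_Y$, where $\pi_X,\pi_Y$ are the projections of $Y\times_Z X$. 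The difficulty is to pass from $t_Y\circ\pi_Y=u\circ g\circ\pi_Y$ to $t_Y=u\circ g$: this would be immediate if $\pi_Y$, the pullback of $f$ along $g$, were an epimorphism, but effective epimorphisms are not pullback-stable in the absence of a regularity hypothesis, which we do not have.

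This is exactly where enough projectives enters. I would choose an effective epimorphism $\beta\colon Q\to Y$ with $Q$ projective, lift $g\circ\beta\colon Q\to Z$ through the effective epimorphism $f$ to a map $\gamma\colon Q\to X$ with $f\circ\gamma=g\circ\beta$, and assemble $(\beta,\gamma)$ into $\delta\colon Q\to Y\times_Z X$ with $\pi_Y\circ\delta=\beta$. Then $t_Y\circ\beta=t_Y\circ\pi_Y\circ\delta=u\circ g\circ\pi_Y\circ\delta=u\circ g\circ\beta$, and cancelling the epimorphism $\beta$ yields $t_Y=u\circ g$; uniqueness of $u$ holds because $f\amalg g$ is an epimorphism, being the left factor in $f=(f\amalg g)\circ(\text{coproduct inclusion})$ of the epimorphism $f$. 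Hence $f\amalg g\in\mathcal{R}_{\proj,\eff}$, right-saturation is established, and Lemma~\ref{LEMMAGENWFS} produces the weak factorization system $(\mathcal{L}_{\proj,\eff},\mathcal{R}_{\proj,\eff})$. The dual statement follows by applying this result verbatim in $\mathcal{S}^{\op}$, where finite products, pushouts, and enough injectives become finite coproducts, pullbacks, and enough projectives.
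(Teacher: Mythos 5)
Your proposal is correct and follows essentially the same route as the paper: reduce to Lemma~\ref{LEMMAGENWFS}, check closure under retracts by transporting a coequalizing test map across the retraction, and establish right-saturation by using enough projectives to produce an epimorphism $Q\to Y$ factoring through the pullback $Y\times_Z X$, exactly the device the paper isolates as its preliminary claim that pullbacks of effective epimorphisms are epimorphisms. The only difference is organizational --- you inline that pullback argument rather than stating it separately --- so no further comment is needed.
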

\begin{proof}
By Lemma~\ref{LEMMAGENWFS}, we have to show that effective epimorphisms are closed under retracts and
that $X \amalg P \rightarrow Y$ is an effective epimorphism if $P \rightarrow Y$ is such. 

First we show that any pullback of an effective epimorphism is a (possibly not yet effective) epimorphism. 
Consider a diagram 
\[ \xymatrix{ & X \times_Y Z \ar[r] \ar[d] & X \ar[d]^{\alpha} \\ P \ar[r] \ar@{.>}[ru] & Z \ar[r] & Y  }\]
in which $\alpha$ is an effective epimorphism and $P \rightarrow Z$ is a projective cover.
Then a factorization indicated by the dotted arrow exists. Since $P \rightarrow Z$ is an epimorphism, also $X \times_Y Z \rightarrow Z$ has to be an epimorphism. 
Let $X \amalg P \rightarrow Y$ be a morphism whose component $P \rightarrow Y$ effective epimorphism. We have to see that it is again an effective epimorphism.
Consider a diagram
\[ \xymatrix{  
(X \amalg P) \times_Y (X \amalg P) \ar@<5pt>[d] \ar@<-5pt>[d] \\ X \amalg P \ar[r]^{\beta = (\beta_X, \beta_P)} \ar[d]  & C \\ Y \ar@{.>}[ru]^\alpha
} \]
in which the compositions of $\beta$ with the two projections are equal. 
We have to show that $\beta$ factorizes through a morphism $\alpha$ as indicated by the dotted arrow. 
Restriction to $P \times_Y P$ shows that there is a morphism $\alpha: Y \rightarrow C$ such that in
\[ \xymatrix{  
P \times_Y X \ar[r]^{\pr_2} \ar[d]^{\pr_1} & X \ar[r]^f \ar[d]_{\beta_X} & Y \ar[ld]^\alpha \\
P \ar[r]^{\beta_P} \ar[d] &C \\
Y \ar[ru]_\alpha 
} \]
the {\em lower} triangle commutes. Since the two compositions $P \times_Y X \rightarrow C$ are also the same and $P \times_Y X \rightarrow X$ is again an epimorphism, also the upper triangle commutes and therefore $\beta$ factors through $\alpha$. 

Next we show that effective epimorphisms are closed under retracts. Consider a retract and a morphism $\alpha: A \rightarrow C$ with coequalizer condition. 
\[ \xymatrix{ 
A \times_{A'} A \ar@<2pt>[d] \ar@<-2pt>[d] \ar[r] & 
B \times_{B'} B \ar@<2pt>[d] \ar@<-2pt>[d] \ar[r] & 
A \times_{A'} A \ar@<2pt>[d] \ar@<-2pt>[d] \ar[dr] \\ 
A \ar[r]^\sigma \ar[d]_f & B \ar[d]^{\pi} \ar[r]^\tau & A \ar[d]^{f} \ar[r]^\alpha & C  \\  A' \ar[r]_{\sigma'} & B' \ar[r]_{\tau'} \ar@{.>}[rru]^(.4)\beta & A' } \]
Using that $\pi$ is an effective epimorphism, contemplating the diagram one sees that a lift $\beta$ exists satisfying $\beta \pi = \alpha \tau$. 
But then $\beta \sigma' f = \beta \pi \sigma = \alpha \tau \sigma = \alpha$. Hence $\beta \sigma'$ is the required lift  from $A'$.  
\end{proof}

The proposition shows in particular that under the given hypotheses effective epimorphisms enjoy all the properties (e.g.\@ closure under composition, retract, pull-back) of the class $\mathcal{R}$ of a weak factorization system. 

\begin{KOR}
Let $I$ be a small category. In the category of presheaves $\mathcal{SET}^{I^{\op}}$ the class $\mathcal{L}_{\proj,\eff}$ consists of morphisms of the form $F \rightarrow F \amalg \coprod X_i$ where the $X_i$ are retracts of representables,
and  $\mathcal{R}_{\proj,\eff}$ consists of the element-wise surjections. 

If in $I$ idempotents split (e.g.\@ if $I$ has finite limits {\em or} colimits) then the projective objects are the coproducts of representables and $\mathcal{L}_{\proj,\eff}$ consists of morphisms of the form $F \rightarrow F \amalg \coprod h_{X_i}$.

In this case the structure is also the same as $(\mathcal{L}_{\proj}, \mathcal{R}_{\proj})$ but not the same as $(\mathcal{L}_{\proj,\spl}, \mathcal{R}_{\inj,\spl})$.
\end{KOR}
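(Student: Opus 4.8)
The plan is to read off the two classes from general topos theory and then, in the idempotent-splitting case, to upgrade the description by means of a connected-components decomposition of presheaves, which is where the substance lies. I would first identify the right class: since $\mathcal{SET}^{I^{\op}}$ is a topos, its effective epimorphisms coincide with its epimorphisms, and epimorphisms in a presheaf category are detected object-wise, so $\mathcal{R}_{\proj,\eff}$ is exactly the class of element-wise surjections. I would then identify the projectives with respect to this class as the retracts of coproducts of representables: by Yoneda a morphism $h_X \rightarrow F$ is an element of $F(X)$, so each representable lifts against an element-wise surjection and is projective, and projectivity passes to coproducts and retracts; conversely the canonical element-wise surjection $\coprod_{(X,x)} h_X \rightarrow F$ (over all $x \in F(X)$) both exhibits enough projectives and, applied to a projective $P$ and split by the lifting property, realizes $P$ as a retract of a coproduct of representables. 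As $\mathcal{SET}^{I^{\op}}$ has finite coproducts, pullbacks and enough projectives, the earlier Proposition makes $(\mathcal{L}_{\proj,\eff}, \mathcal{R}_{\proj,\eff})$ a weak factorization system, and Lemma~\ref{LEMMAGENWFS} describes $\mathcal{L}_{\proj,\eff}$ as the retracts of $F \rightarrow F \amalg P$ with $P$ projective; writing $P$ as a retract of $\coprod h_{X_i}$ gives the stated description.

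For the idempotent-splitting refinement the key point is that every projective is then an honest coproduct of representables, not merely a retract of one. I would argue in three steps. (i) Every presheaf $F$ is canonically the coproduct $\coprod_c F_c$ over the connected components $c$ of its category of elements, and each $F_c$ is connected in the sense that $\Hom(F_c, -)$ preserves coproducts, because any morphism out of $F_c$ assigns each element to a single coproduct summand and this assignment is constant along the connected component. (ii) A connected projective is a retract of a single representable: realizing it as a retract of $\coprod h_{X_i}$, connectedness forces the section to land in one summand $h_{X_{i_0}}$, and the retraction restricts to exhibit the retract. (iii) A retract of $h_X$ corresponds by Yoneda to an idempotent endomorphism of $X$ in $I$, which splits as a representable $h_Y$ once idempotents split in $I$. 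Combining these, given a projective $P$ I would decompose $P = \coprod_c P_c$; each summand is a retract of $P$, hence projective, and connected, hence by (ii)--(iii) representable, so $P = \coprod_c h_{Y_c}$.

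To remove the word ``retracts'' from the description of $\mathcal{L}_{\proj,\eff}$, I would use that $\mathcal{SET}^{I^{\op}}$ is extensive: by the argument of the earlier Proposition on $\mathcal{L}_{\proj,\spl}$ in extensive categories, any retract of a coproduct injection $F \rightarrow F \amalg \coprod h_{X_i}$ is again a coproduct injection $G \rightarrow G \amalg E$ with $E$ a retract of $\coprod h_{X_i}$, and by the preceding decomposition such an $E$ is itself a coproduct of representables. For the two comparisons: $\mathcal{R}_{\proj}$ is the class of epimorphisms, which in a topos equals the class of effective epimorphisms, so the right classes of the two systems agree and hence so do the systems; whereas $\mathcal{R}_{\proj,\spl}$ consists of split epimorphisms, a proper subclass of the element-wise surjections in general --- for instance the quotient $\Delta^1 \rightarrow \Delta^1/\partial \Delta^1$ of simplicial sets is an element-wise surjection admitting no section --- so that system is genuinely different.

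The main obstacle is Steps (i)--(ii): a retract of a coproduct need not respect the coproduct decomposition of the ambient object, so one cannot split it summand by summand, and it is precisely the connected-components decomposition that converts a single global retract into a family of retracts of individual representables, after which idempotent-splitting concludes. I would take particular care to verify that the connected-components decomposition is compatible with coproducts and that connectedness is inherited by retracts, since these are the load-bearing facts.
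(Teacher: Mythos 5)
Your proposal follows essentially the same route as the paper: epimorphisms in a topos are effective and detected element-wise, the canonical surjection $\coprod_{(X,x)} h_X \rightarrow F$ supplies enough projectives, projectives are therefore retracts of coproducts of representables, idempotent-splitting in $I$ upgrades these to genuine coproducts of representables, and extensivity removes the word ``retract'' from the description of $\mathcal{L}_{\proj,\eff}$. The paper merely asserts the idempotent-splitting step, whereas you supply the standard connected-components argument, which is correct in substance. One justification, however, is wrong as stated: a coproduct summand $P_c$ of $P=\coprod_c P_c$ need \emph{not} be a retract of $P$ --- for $I$ the discrete category on two objects $a,b$, the summand $h_a$ is not a retract of $h_a\amalg h_b$ because $\Hom(h_b,h_a)=\Hom_I(b,a)=\emptyset$ --- so you cannot conclude projectivity of $P_c$ that way. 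The fix uses only what you already have: consider the composite $P_c\hookrightarrow P\xrightarrow{\ s\ }\coprod_i h_{X_i}\xrightarrow{\ r\ }P$, which equals the inclusion of $P_c$; connectedness of $P_c$ forces $s|_{P_c}$ into a single summand $h_{X_{i_0}}$, and since $h_{X_{i_0}}$ is itself connected its image under $r$ lies in a single component of $P$, necessarily $P_c$ because it contains $r(s(P_c))=P_c$. This exhibits $P_c$ directly as a retract of $h_{X_{i_0}}$, without ever needing $P_c$ to be projective or a retract of $P$, and steps (iii) and the rest of your argument then go through unchanged.
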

\begin{proof}
This structure is a special case of the previous because the effective epimorphisms are the element-wise surjections in this case. (Every epimorphism in a topos is effective).
Obviously coproducts of representables are projective and 
enough projectives exist because  
\[ \coprod_{X, x \in G(X)} h_X \rightarrow G \] is an epimorphism.  
Projective objects are thus the coproducts of retracts of representables. That representables are closed under retract themselves, is precisely the condition that idempotents split.  
Note that in any case a retract of a morphism of the form $F \rightarrow F \amalg \coprod X_i$ is of the same form with retracts of the $X_i$ because $\mathcal{SET}^{I^{\op}}$ is extensive. 
\end{proof}

This structure is actually left generated. $\mathcal{L}_{\proj,\eff}$ is generated by the set $\emptyset \rightarrow h_S$ for $S \in I$ and $\mathcal{SET}^{I^{\op}}$ is cocomplete. 

\begin{PAR}[extension to diagrams] 
If $(\mathcal{L}, \mathcal{R})$ is a weak factorization system in $\mathcal{S}$ and $I$ a small category then there are two natural candidates for a weak factorization system in $\mathcal{S}^I$ namely
$(\mathcal{L}^i, \mathcal{R}^i)$ in which a morphism is in $\mathcal{L}^i$ precisely if it is point-wise in $\mathcal{L}$, and $(\mathcal{L}^p, \mathcal{R}^p)$ in which a morphism is in $\mathcal{R}^p$ precisely if it is point-wise in $\mathcal{R}$. The other class is in each case defined by the lifting property. 
We have the following general result:
\end{PAR}
\begin{PROP}\label{PROPWFSDIAGRAM}
Let $\mathcal{S}$ be a category with weak factorization system and let $I$ be a small category,
If $(\mathcal{L}, \mathcal{R})$ is of projective type and either $I$ is finite, or $\mathcal{S}$ has all coproducts, then $(\mathcal{L}^p, \mathcal{R}^p)$ is a weak factorization system.
If $(\mathcal{L}, \mathcal{R})$ is of injective type and either $I$ is finite, or $\mathcal{S}$ has all products, then $(\mathcal{L}^i, \mathcal{R}^i)$ is a weak factorization system.
\end{PROP}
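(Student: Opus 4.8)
The plan is to prove the projective-type assertion (the injective-type case is formally dual). By Lemma~\ref{LEMMAGENWFS} the class $\mathcal{R}$ is right-saturated and closed under retracts, $\mathcal{S}$ has finite coproducts and enough projectives, and $\mathcal{L}$ is generated by the morphisms $\emptyset \rightarrow P$ with $P$ projective w.r.t.\@ $\mathcal{R}$. I want to produce, for the class $\mathcal{R}^p$ of point-wise-$\mathcal{R}$ morphisms in $\mathcal{S}^I$, a matching left class together with factorizations, so that I may again invoke Lemma~\ref{LEMMAGENWFS}. The natural strategy is to identify the projectives for $\mathcal{R}^p$ explicitly and to show there are enough of them, since then the lemma produces the weak factorization system with $\mathcal{L}^p = \{\text{retracts of } F \rightarrow F \amalg P \text{ with } P \text{ projective w.r.t.\@ } \mathcal{R}^p\}$ and $\mathcal{R}^p$ right-saturated.

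First I would describe the projectives. For each object $i \in I$ there is the evaluation functor $\mathrm{ev}_i \colon \mathcal{S}^I \rightarrow \mathcal{S}$, and when $\mathcal{S}$ has coproducts (or when $I$ is finite, so the relevant coproducts are finite) this has a left adjoint, the left Kan extension $(\mathrm{Lan}_i P)(j) = \coprod_{\Hom_I(i,j)} P$ along $i \colon \ast \rightarrow I$. I expect the projectives for $\mathcal{R}^p$ to be exactly the retracts of coproducts $\coprod_k \mathrm{Lan}_{i_k} P_k$ with each $P_k$ projective in $\mathcal{S}$. The lifting computation is the adjunction identity: a map $\mathrm{Lan}_i P \rightarrow X$ lifts against $\rho \in \mathcal{R}^p$ iff $P \rightarrow X(i)$ lifts against $\rho_i \in \mathcal{R}$, which holds because $P$ is projective and $\rho_i \in \mathcal{R}$; coproducts and retracts of projectives are projective by the usual formal argument. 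This is where the two hypotheses enter: the left adjoint to $\mathrm{ev}_i$ exists precisely because the needed copowers of $P$ exist, which is guaranteed either by finiteness of $I$ (finite coproducts suffice) or by $\mathcal{S}$ having all coproducts.

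Next I would verify enough projectives: given $F \in \mathcal{S}^I$, choose for each $i$ a projective $P_i$ with $P_i \rightarrow F(i)$ in $\mathcal{R}$, and form $Q := \coprod_i \mathrm{Lan}_i P_i$ with the adjoint-transpose map $Q \rightarrow F$. I must check this map is point-wise in $\mathcal{R}$. Evaluating at $j$ gives a coproduct over $i$ and over $\Hom_I(i,j)$ of copies of $P_i \rightarrow F(i) \rightarrow F(j)$; the component indexed by $(j,\mathrm{id}_j)$ already supplies the chosen $\mathcal{R}$-cover $P_j \rightarrow F(j)$, so the whole map $Q(j) \rightarrow F(j)$ factors as $P_j \rightarrow F(j)$ precomposed with a coproduct injection, and right-saturation of $\mathcal{R}$ forces it into $\mathcal{R}$. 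Once enough projectives and the explicit projective class are in hand, Lemma~\ref{LEMMAGENWFS} applies verbatim.

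I expect the main obstacle to be bookkeeping rather than conceptual: confirming that $\mathcal{R}^p$ is right-saturated and closed under retracts at the level of diagrams (both are immediate point-wise from the same properties of $\mathcal{R}$, since coproducts and retracts in $\mathcal{S}^I$ are computed object-wise), and, more delicately, making the enough-projectives argument clean when $I$ is only finite, so that only finite coproducts are invoked and the left Kan extension genuinely exists. The subtle point worth double-checking is that the copowers $\coprod_{\Hom_I(i,j)} P$ appearing in $\mathrm{Lan}_i P$ stay within the finite coproducts available when $I$ is finite; since $\Hom_I(i,j)$ is then a finite set this is fine, and it is exactly the dichotomy in the hypothesis ($I$ finite \emph{or} $\mathcal{S}$ cocomplete) that keeps all constructions legal.
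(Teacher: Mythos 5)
Your proposal is correct and follows essentially the same route as the paper: the paper likewise applies Lemma~\ref{LEMMAGENWFS} by exhibiting the objects $i_!P$ (your $\mathrm{Lan}_i P$) as projectives for $\mathcal{R}^p$ and showing that the canonical map $\coprod_{i} i_! P_i \rightarrow X$ is point-wise in $\mathcal{R}$ by right-saturation. Your additional care about the existence of the copowers $\coprod_{\Hom_I(i,j)} P$ under the finiteness dichotomy is exactly the point the hypothesis is designed to handle.
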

\begin{proof}
Let $(\mathcal{L}, \mathcal{R})$ be of projective type. By assumption $\mathcal{R}$ is saturated and has enough projectives. Then obviously also $\mathcal{R}^p$ is saturated and 
the class of objects of the form $i_! P$ for $P$ projective w.r.t.\@ $\mathcal{R}$ and $i \in I$  constitute enough projectives for $\mathcal{R}^p$.
In fact, for every object $X \in \mathcal{S}^{I}$ we have a canonical morphism
\[ \coprod_{i \in I} i_! P_i \rightarrow X \]
where $P_i \rightarrow i^*X$ is a projective cover. 
Since $\mathcal{R}$ is right-saturated, it is clear that this morphism is point-wise  in $\mathcal{R}$ and thus in $\mathcal{R}^p$. The other assertion is dual. 
\end{proof}
\begin{BEISPIEL}
The canonical weak factorization system $(\mathcal{L}, \mathcal{R})$ on $\mathcal{SET}$ with \[\mathcal{L}=\text{injections} \quad \mathcal{R}=\text{surjections} \] is left- and right-saturated and 
has enough injectives and projectives. In fact every set is injective and projective. 
We therefore get that $(\mathcal{L}^i, \mathcal{R}^i) = (\mathcal{L}_{inj}, \mathcal{R}_{inj})$ and $(\mathcal{L}^p, \mathcal{R}^p) = (\mathcal{L}_{proj}, \mathcal{R}_{proj})$
are weak factorization systems on $\mathcal{SET}^{I}$. 
\end{BEISPIEL}

\begin{BEM}
It is well-known that if $I$ is directed and $\mathcal{S}$ has colimits then $(\mathcal{L}^p, \mathcal{R}^p)$ is always a weak factorization system and if $I$ is inverse and $\mathcal{S}$ has limits $(\mathcal{L}^i, \mathcal{R}^i)$ is always a weak factorization system. In these cases the factorizations can be constructed explicitly by induction.
\end{BEM}

$(\mathcal{L}_{\inj}, \mathcal{R}_{\inj})$ is a weak factorization system more generally for any topos: 
\begin{PROP}
Let $\mathcal{S}$ be a topos. Then $(\mathcal{L}_{\inj}, \mathcal{R}_{\inj}) = (\mathcal{L}_{\inj, \eff}, \mathcal{R}_{\inj, \eff})$ % nlab strong+monomorphism
is a left generated weak factorization system. 
\end{PROP}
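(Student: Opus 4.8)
The plan is to separate the two assertions: first that the two weak factorization systems coincide, and second that the common structure is a left generated weak factorization system. Throughout I read ``topos'' as ``Grothendieck topos'', so that $\mathcal{S}$ is cocomplete, well-powered and locally presentable (cocompleteness being forced by the very notion of ``left generated'').

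First I would show that in $\mathcal{S}$ every monomorphism is effective, so that $\mathcal{L}_{\inj} = \mathcal{L}_{\inj,\eff}$ and, taking right orthogonals, $\mathcal{R}_{\inj} = (\mathcal{L}_{\inj})^\Box = (\mathcal{L}_{\inj,\eff})^\Box = \mathcal{R}_{\inj,\eff}$. For a monomorphism $m\colon A \hookrightarrow B$ let $\chi\colon B \to \Omega$ be its classifying map and $\top\colon 1 \to \Omega$ the universal subobject; then $m$ is the equalizer of $\chi$ and $B \xrightarrow{} 1 \xrightarrow{\top} \Omega$, hence an effective (regular) monomorphism. This reduces the proposition to exhibiting $(\mathcal{L}_{\inj},\mathcal{R}_{\inj})$ as a left generated weak factorization system.

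Next I would produce the generating set. Choose a regular cardinal $\kappa$ for which $\mathcal{S}$ is locally $\kappa$-presentable and admits an essentially small set of $\kappa$-presentable generators. Since $\mathcal{S}$ is well-powered, the class
\[ L := \{\, S \hookrightarrow G \ \mid\ G\ \kappa\text{-presentable},\ S \subseteq G \,\} \]
of subobject inclusions of $\kappa$-presentable objects is, up to isomorphism, a set, and each member has $\kappa$-small codomain. By the small object argument $L$ generates a weak factorization system $({}^\Box(L^\Box), L^\Box)$, so it only remains to identify ${}^\Box(L^\Box)$ with the monomorphisms. The inclusion ${}^\Box(L^\Box) \subseteq \mathcal{L}_{\inj}$ is the easy half: a topos is an adhesive category, so pushouts of monomorphisms are monomorphisms, monomorphisms are closed under transfinite composition (filtered colimits preserve monos), and they are trivially closed under retracts; since every element of ${}^\Box(L^\Box)$ is a retract of a transfinite composite of pushouts of elements of $L \subseteq \mathcal{L}_{\inj}$, it is a monomorphism.

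The reverse inclusion $\mathcal{L}_{\inj} \subseteq {}^\Box(L^\Box)$ --- equivalently $L^\Box \subseteq \mathcal{R}_{\inj}$ --- is the heart of the matter and the step I expect to be the main obstacle. Here I would fix $p \in L^\Box$ and a lifting problem against an arbitrary monomorphism $m\colon A \hookrightarrow B$, and build a diagonal by transfinite recursion: writing $B$ as a $\kappa$-filtered union of its $\kappa$-presentable subobjects, one extends a partial lift one $\kappa$-presentable piece $C$ at a time, each extension being a lifting problem for a pushout of the subobject inclusion $C\cap(A\cup\cdots) \hookrightarrow C$ of $\kappa$-presentable objects, i.e.\@ for a member of $L$, hence solvable since $p \in L^\Box$. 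At limit stages the partial lifts assemble because filtered colimits are exact in $\mathcal{S}$ and monomorphisms are closed under them, and a Zorn's lemma argument then yields a lift over all of $B$. Equivalently, this is the statement that every monomorphism is a $\kappa$-filtered colimit, in the arrow category, of monomorphisms between $\kappa$-presentable objects and that ${}^\Box(L^\Box)$ is closed under such colimits. Granting this, $L$ generates exactly the monomorphisms, the factorizations supplied by the small object argument are $(\mathcal{L}_{\inj},\mathcal{R}_{\inj})$-factorizations, and the structure is left generated as claimed. As a shortcut for the mere existence of factorizations one may instead invoke the dual of the preceding proposition, using that a Grothendieck topos has finite products, pushouts and enough injectives.
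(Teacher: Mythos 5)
The paper does not actually prove this proposition: its ``proof'' is a citation of Cisinski (Corollaire~1.30 of \emph{Th\'eories homotopiques dans les topos}, and 2.1.11 of his book). What you have written is essentially a reconstruction of the content of that citation --- the existence of a \emph{cellular model} for the monomorphisms of a Grothendieck topos --- and the architecture is sound: monomorphisms are effective via the subobject classifier, giving $\mathcal{L}_{\inj}=\mathcal{L}_{\inj,\eff}$, and then $\mathcal{R}_{\inj}=\mathcal{R}_{\inj,\eff}$ because both pairs are weak factorization systems with the same left class (the effective pair being one by the paper's preceding proposition, since a Grothendieck topos has finite products, pushouts and enough injectives --- the shortcut you mention at the end). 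The generation statement then reduces, as you say, to $\mathcal{L}_{\inj}\subseteq{}^\Box(L^\Box)$ for your set $L$ of subobject inclusions into $\kappa$-presentable objects, the easy inclusion following from adhesivity and closure under filtered colimits and retracts.

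Two points in the hard direction should be nailed down rather than gestured at. First, you need that every object is the $\kappa$-filtered union of its $\kappa$-presentable subobjects and that the successive stages $A\cup C_1\cup\dots$ stay within reach of $L$; this works once $\kappa$ is chosen so that $\kappa$-presentable objects are closed under subobjects and images (possible for arbitrarily large regular $\kappa$ in a Grothendieck topos), or one can sidestep the issue, as Cisinski does, by attaching cells indexed by morphisms from a fixed small generating family rather than by arbitrary presentable subobjects. Second, the pivot that turns each extension step into a pushout of a member of $L$ is the topos-theoretic fact that a union of subobjects is the pushout over their intersection, $A'\cup C\cong A'\amalg_{A'\cap C}C$ with all four maps monic; you invoke adhesivity only for the easy inclusion, but this is where it really earns its keep. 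With those two facts made explicit, your transfinite recursion (and the Zorn-type argument at the end) goes through, and your write-up is, unlike the paper's, self-contained.
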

\begin{proof}
Cf.\@ \cite[Corollaire 1.30]{Cis02} and also \cite[2.1.11]{Cis19}.
\end{proof}

\begin{PAR}[Abelian weak factorization systems (cotorsion pairs)]
If $\mathcal{A}$ is an Abelian category, call a weak factorization system as above {\bf Abelian}, if the morphism $f: A \rightarrow B$ is in $\mathcal{L}$ precisely if $f$ is a monomorphism and $0 \rightarrow \coker(f)$ is in $\mathcal{L}$  (i.e.\@ $\coker(f)$ is projective for the w.f.s.\@) and dually for $\mathcal{R}$. Then it is easy to see that an Abelian weak factorization system is the same as a complete cotorsion pair in the sense of Hovey, Gillespie, etc. (cf.\@ \cite{Hov07}) and morphisms of weak factorization systems as defined in (\ref{PARWFSFUNCTORIALITY}) correspond to compatible adjunctions in several variables as defined by Recktenwald (cf.\@ \cite[Proposition 2.3.21]{Rec19}).

In the extreme case, if $\mathcal{A}$ has enough injectives, we have thus the w.f.s.\@ ($\mathcal{L}_{\inj}$, $\mathcal{R}_{\inj}$) in which $\mathcal{L}_{\inj}$ consists of monomorphisms and  $\mathcal{R}_{\inj}$ consists of epimorphisms with injective kernel, or dually if $\mathcal{A}$ has enough projectives, we have the weak factorization system ($\mathcal{L}_{\proj}$, $\mathcal{R}_{\proj}$)  in which $\mathcal{R}_{\proj}$ consists of epimorphisms and  $\mathcal{L}_{\proj}$ consists of monomorphsims with projective cokernel.  
\end{PAR}

\section{Transport of weak factorization systems to simplicial objects}

\begin{PAR}
Recall the standard model structure on simplicial sets $\mathcal{SET}^{\Delta^{\op}}$ in which
\[
\begin{array}{rcl}
\Cof & =& \text{monomorphisms } \\
\Fib & = & \text{Kan fibrations } \\
\mathcal{W} & = & \text{weak equivalences of simplicial sets } 
\end{array}
\]
As in any model category this gives rise to two weak factorization systems ($\Cof, \Tfib$) and ($\Tcof, \Fib$). Those are left generated by the sets 
 $\{\partial \Delta_{n} \rightarrow \Delta_n\}$, and $\{\Lambda_{n,k} \rightarrow \Delta_n\}$, respectively. 
\end{PAR}
 
 \begin{PAR}
 Alternatively $\Tcof$ can be generated by the set $\{ \Delta_n \times \{e\} \cup  \partial \Delta_n \times \Delta_1 \rightarrow \Delta_n \times \Delta_1 \}$ defined by the Cartesian squares of simplicial sets:
\[
\xymatrix{ \partial \Delta_n \times \{e\} \ar[r]^{\id \times \delta_{1}^{1-e}} \ar[d] & \partial \Delta_n \times \Delta_1 \ar[d] \\ 
 \Delta_n \times \{e\}  \ar[r]^{\id \times \delta_{1}^{1-e}} &  \Delta_n \times \Delta_1
  } \]
 cf.\@ \cite[Chapter IV]{GZ67}.
\end{PAR}

\begin{PAR}\label{PARSIMPLICIALSTRUCT}
Consider a category $\mathcal{S}$ with weak factorization system $(\mathcal{L}, \mathcal{R})$ such that $\mathcal{S}$ has coproducts  (mutatis mutandis one may restrict everywhere to countable coproducts) and finite limits and assume that 
\begin{enumerate}
\item pushouts along morphisms in $\mathcal{L}$ exist;
\item transfinite compositions of morphisms in $\mathcal{L}$ exist. 
\end{enumerate}
Note that the resulting morphisms are automatically in  $\mathcal{L}$ again. 
Then there is a functor in two variables
\[ \otimes: \mathcal{S} \times \mathcal{SET}^{\Delta^{\op}} \rightarrow  \mathcal{S}^{\Delta^{\op}} \]
which is just the tensoring over $\mathcal{SET}$ (which $\mathcal{S}$ has due to the existence of coproducts)  extended to diagrams.
Under the assumptions this functor has right adjoints in both variables (the first restricted to finite simplicial sets):
\[
\begin{array}{lclcll}
\Hom_l:& (\mathcal{SET}^{\Delta^{\op}}_{fin})^{\op} &\times& \mathcal{S}^{\Delta^{\op}} &\rightarrow& \mathcal{S}  \\
\Hom_r:& \mathcal{S}^{\op} &\times& \mathcal{S}^{\Delta^{\op}}  &\rightarrow&  \mathcal{SET}^{\Delta^{\op}} 
\end{array}
\]
Observe that $\Hom_l$ involves an end construction and thus exists because $\mathcal{S}$ has finite limits. More precisely, we have
\[ \Hom_l(X, Y) = \int_{\Delta^{\op}} \Hom(X_{-}, Y_{-}). \]
If $X$ is finite then, in particular, $X_{n'}$ is finite for all $n'$ and thus $\Hom(X_{n'}, Y_n)$ exists, being a finite product. 
Furthermore, there exists an $N \in \N$ such that $X \cong \iota_{N,!} X_{\le N}$ for $\iota_N: \Delta_{\le N}^{\op} \hookrightarrow \Delta^{\op}$. Therefore by Lemma~\ref{LEMMAEND} we have
\[ \Hom_l(X, Y) = \int_{\Delta^{\op}} \Hom((\iota_{N,!} (X_{\le N}))_{-}, Y_{-}) \cong  \int_{\Delta^{\op}_{\le N}}\Hom(X_{-},Y_{-}) \]
and thus the occurring end is a finite limit. 
\end{PAR}

\begin{PAR}\label{PARTRANSPORTWFS}
We want to investigate under which circumstances we get a model category structure on $\mathcal{S}^{\Delta^{op}}$ such that $\otimes$ is a morphism of weak factorization systems in the sense of (\ref{PARWFSFUNCTORIALITY}) with the same finiteness restriction as in (\ref{PARSIMPLICIALSTRUCT}):
\begin{gather*}
  (\mathcal{L}, \mathcal{R}), (\Cof, \Fib \cap \mathcal{W}) \rightarrow (\Cof_{\mathcal{S}}, \Fib_{\mathcal{S}} \cap \mathcal{W}_{\mathcal{S}}) \\
  (\mathcal{L}, \mathcal{R}), (\Cof \cap \mathcal{W}, \Fib) \rightarrow (\Cof_{\mathcal{S}} \cap \mathcal{W}_{\mathcal{S}}, \Fib_{\mathcal{S}}) 
\end{gather*}
\end{PAR}
One can try to transport the weak factorization systems via $\otimes$ in the sense of (\ref{PARWFSTRANSPORT}), that is, define  
\[ \Cof_{\mathcal{S}} := {}^\Box(\{(X \rightarrow Y) \boxplus (\partial \Delta_n \rightarrow \Delta_n) \}^\Box) \quad \text{and} \quad 
\Tcof_{\mathcal{S}} := {}^\Box( \{(X \rightarrow Y) \boxplus (\Lambda_{k, n} \rightarrow \Delta_n)\}^\Box)   \]
where $X \rightarrow Y$ runs through $\mathcal{L}$ (or a generating class therein). 
These are weak factorization systems under pretty general circumstances. 
The first case is automatic: 
\begin{PROP}\label{PROPCOFTFIB}
The pair
$(\Cof_{\mathcal{S}}, \Tfib_{\mathcal{S}})$ (transport of $(\mathcal{L}, \mathcal{R}), (\Cof, \Fib\cap \mathcal{W})$ along $\otimes$) is a weak factorization system. 
If $\mathcal{S}$ has finite colimits then a morphism $X \rightarrow Y$ is in $\Cof_{\mathcal{S}}$ if and only if the morphism 
\[ L_n Y \amalg_{L_n X} X_n \rightarrow Y_n \]
is in $\mathcal{L}$. If $\mathcal{S}$ has finite limits (which we always assume), a morphism $X \rightarrow Y$ is in $\Tfib_{\mathcal{S}}$ if and only if the morphism  is 
\[ X_n \rightarrow M_n X \times_{M_n Y} Y_n  \]
is in $\mathcal{R}$. Hence, if $\mathcal{S}$ has also all finite colimits, then this 
weak faktorization is equivalently the Reedy-structure. 

Furthermore, any morphism in $\Cof_{\mathcal{S}}$ is degree-wise in $\mathcal{L}$ (but this is not always sufficient). 
\end{PROP}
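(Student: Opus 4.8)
The plan is to dispatch the abstract weak factorization system axioms first and then extract the explicit matching and latching descriptions. Since $(\Cof_{\mathcal{S}},\Tfib_{\mathcal{S}})$ is a transported structure in the sense of~\ref{PARWFSTRANSPORT}, the orthogonality $\Cof_{\mathcal{S}}\Box\Tfib_{\mathcal{S}}$ and closure under retracts hold by construction, so only the existence of factorizations remains for the wfs claim. I would first establish the matching characterization of $\Tfib_{\mathcal{S}}$, as it rests on the standing finite-limit hypothesis alone and will govern the factorization. Applying the adjunction identities of~\ref{PARADJUNCTION} to $\otimes$ and its right adjoint $\Hom_l$, for $f\in\mathcal{L}$ and $g_n\colon\partial\Delta_n\to\Delta_n$ one has $\boxplus\otimes(f,g_n)\,\Box\,h\iff f\,\Box\,\boxdot\Hom_l(g_n^{\op},h)$. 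Since $\Hom_l(\Delta_n,A)=A_n$ and $\Hom_l(\partial\Delta_n,A)=M_nA$, the right-hand box-product is precisely the relative matching map $A_n\to M_nA\times_{M_nB}B_n$. Hence $h\in\Tfib_{\mathcal{S}}$ if and only if every relative matching map lies in $\mathcal{L}^\Box=\mathcal{R}$, which is the asserted description and uses only finite limits.

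For the factorizations I would build the middle object $Z$ of a factorization $X\to Z\to Y$ by induction on simplicial degree, exploiting that each generator $\boxplus\otimes(f,g_n)$ is an isomorphism in all degrees $<n$: its interior simplices correspond to the surjective maps $[m]\to[n]$, which force $m\ge n$. Thus attaching $n$-cells leaves all lower degrees unchanged, the induction advances one degree at a time, and each degree is finalized once. At stage $n$ the already-constructed degrees $<n$ determine $M_nZ$ and hence $W_n:=M_nZ\times_{M_nY}Y_n$ by finite limits; I factor the canonical map from the current degree-$n$ object to $W_n$ in $\mathcal{S}$ as an $\mathcal{L}$-map followed by an $\mathcal{R}$-map, realizing the $\mathcal{L}$-part by attaching free cells (coproducts of generators) whose degeneracies are freely adjoined. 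If $\mathcal{S}$ has finite colimits this is exactly the Reedy factorization of $L_nZ\amalg_{L_nX}X_n\to W_n$; in general one keeps the latching data free, so that only coproducts, pushouts along $\mathcal{L}$, and finite limits intervene, all available by~\ref{PARSIMPLICIALSTRUCT}. By construction $X\to Z$ is a cell complex of generators, hence in $\Cof_{\mathcal{S}}$, while $Z\to Y$ has all relative matching maps in $\mathcal{R}$ and so lies in $\Tfib_{\mathcal{S}}$ by the previous paragraph.

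When $\mathcal{S}$ has finite colimits I would identify $\Cof_{\mathcal{S}}$ with the Reedy cofibrations by the usual two-sided comparison: each generator $\boxplus\otimes(f,g_n)$ has relative latching maps in $\mathcal{L}$ (the one in degree $n$ is $f$ itself, the rest coproducts of $f$ with identities), the maps with $\mathcal{L}$-relative-latching maps are closed under pushout, transfinite composition, and retract, and conversely the skeletal filtration writes any such map as a cell complex of generators; combined with the matching description this yields the stated Reedy structure. The degreewise assertion I read directly off the generators: in degree $m$ the map $\boxplus\otimes(f,g_n)$ is the coproduct of copies of $f\in\mathcal{L}$, indexed by the surjective maps $[m]\to[n]$, with identity maps indexed by the remaining simplices, hence it lies in $\mathcal{L}$; as $\mathcal{L}$ is stable under coproducts, pushouts, transfinite compositions, and retracts, every morphism of $\Cof_{\mathcal{S}}$ is degreewise in $\mathcal{L}$. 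This last argument invokes no colimits beyond coproducts and so holds in the general setting.

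The main obstacle is the factorization when finite colimits are unavailable. The matching half is fully controlled by finite limits and the adjunction, but the cofibration half would ordinarily call for latching objects. The decisive point is that the iso-below-degree property of the generators makes the cell-attachment terminate after a single $\mathcal{S}$-factorization per degree, so that no transfinite iteration---and hence no smallness hypothesis---is required, and the only colimits used are the free attachments (coproducts) and the pushouts along $\mathcal{L}$ secured by~\ref{PARSIMPLICIALSTRUCT}. Checking that these free, degree-by-degree attachments cohere into a genuine simplicial object whose relative latching maps are the intended $\mathcal{L}$-maps is the technical heart of the argument.
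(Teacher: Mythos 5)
Your proposal is correct and follows essentially the same route as the paper's proof: you build the factorization by attaching $n$-cells degree by degree, factoring the map from the current degree-$n$ object to $Y_n \times_{M_n Y} M_n X^{(n)}$ through $(\mathcal{L},\mathcal{R})$ and pushing out the corresponding generator $\boxplus\otimes(-,\partial\Delta_n\to\Delta_n)$, with the observation that generators are isomorphisms below degree $n$ standing in for any smallness hypothesis. Your explicit adjunction argument identifying $\Tfib_{\mathcal{S}}$ with the maps whose relative matching maps lie in $\mathcal{R}$, and your verifications of the degree-wise and Reedy claims, merely spell out steps the paper states but leaves implicit.
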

\begin{proof}(Cf. \cite[II, \S 4, Proposition 3]{Qui67})
Given a morphism $X \rightarrow Y$, 
construct a sequence of objects $X:=X^{(0)} \rightarrow X^{(1)} \rightarrow X^{(2)} \rightarrow \cdots$ by means of coCartesian squares of the form 
\[ \xymatrix{ \partial \Delta_n \otimes X^{(n)}_n \ar[r] \ar[d] & \Delta_n \otimes X^{(n)}_n  \ar[d] \\
\partial \Delta_n \otimes P^{(n)} \ar[r] & \lefthalfcap  \ar[r] \ar[d] & X^{(n)} \ar[d] \\
& \Delta_n \otimes P^{(n)} \ar[r]  & X^{(n+1)} 
 }\]Here $P^{(n)}$ is obtained by factoring
\[ \xymatrix{ X^{(n)}_n \ar[rr]^-{\in \mathcal{L}} & & P^{(n)} \ar[rr]^-{\in \mathcal{R}} && Y_n \times_{\Hom(\partial \Delta_n, Y)}  \Hom(\partial \Delta_n, X^{(n)})  } \]
Note that the push-out is degree-wise along a morphism in $\mathcal{L}$ and thus exists by assumption. 
The objects $X^{(n)}$ come equipped with an evident morphism to $Y$. Note that $\colim X^{(n)}$ is degree-wise a transfinite composition of morphisms in $\mathcal{L}$ and thus exists. We claim that
\[ \xymatrix{ X \ar[r]  & \colim X^{(n)} \ar[r] & Y  } \]
is the requested factorization. By construction $X \rightarrow \colim X^{(n)}$ is in $\Cof_{\mathcal{S}}$. 
Since the morphism $X^{(n)}  \rightarrow X^{(n+1)}$  is an isomorphism in degree $< n$ 
we have to show that for a morphism $A \rightarrow B$ in $\mathcal{L}$ a lifting as indicated in the diagram
\[ \xymatrix{ \partial \Delta_n \otimes A \ar[r] \ar[d] & \Delta_n \otimes A  \ar[d] \\
\partial \Delta_n \otimes B \ar[r] & \lefthalfcap  \ar[r] \ar[d] & X^{(n+1)} \ar[d] \\
& \Delta_n \otimes B \ar[r] \ar@{.>}[ru] & Y 
 }\]
 exists --- or equivalently a lift in 
\[ \xymatrix{ A \ar[r] \ar[d] & X_n^{(n+1)} = P^{(n)}  \ar[d]^{\in \mathcal{R}} \\
B \ar[r] \ar@{.>}[ru] & \Box  \ar[rr] \ar[d] & &  Y_n \ar[d] \\
& \Hom(\partial \Delta_n, X^{(n+1)}) = \Hom(\partial \Delta_n, X^{(n)})   \ar[rr] &  & \Hom(\partial \Delta_n, Y)
 }\]
 By construction the so indicated vertical morphism in the diagram is in $\mathcal{R}$ and thus a lift exists.

If $\mathcal{S}$ has finite colimits one can alternatively construct a factorization
\[ X \rightarrow Z \rightarrow Y \]
using the Reedy structure, factorizing inductively the morphism 
\[ L_n Z \amalg_{L_n X} X_n  \rightarrow M_n Z \times_{M_n Y} Y_n \]
into $\mathcal{L}$ and $\mathcal{R}$ for all $n$. 
\end{proof}

The existence of the latching object $L_n$ and the cofibrancy conditions can sometimes be made much more concrete. 
This is due to the fact that $\Delta^{\op}$ is an {\em elegant} Reedy category in the sense of \cite{BR13}. For this to work the class $\mathcal{L}$ of the weak factorization system has to satisfy the following: 

\begin{DEF}\label{DEFADHESIVE}
A weak factorization system $(\mathcal{L}, \mathcal{R})$ is called {\bf adhesive} if the following holds: 
\begin{enumerate} \item[(A1)] All morphisms in $\mathcal{L}$ are monomorphisms.
\item[(A2)] For any {\em Cartesian} square
\[ \xymatrix{
A \cap B \ar[r] \ar[d] & B \ar[d] \\
A \ar[r] & X  }\]
in which all morphisms are in $\mathcal{L}$, the push-out exists and the morphism
\[ A \cup B := A \amalg_{A \cap B} B \rightarrow X \]
is in $\mathcal{L}$. 
\item[(A3)] Push-outs $A \cup B$ as in 2.\@ are stable under intersection. More precisely, for morphisms $A \rightarrow X$, $B \rightarrow X$, and $C \rightarrow X$
in $\mathcal{L}$, such that all (multiple) intersections exist and all ``projections'' are in $\mathcal{L}$, the diagram
\[ \xymatrix{
A \cap B \cap C  \ar[r] \ar[d] & A \cap C \ar[d] \\
B \cap C \ar[r] & (A \cup B) \cap C  }\]
is a push-out.
\end{enumerate}
\end{DEF}

We say that a subobject is an {\bf $\mathcal{L}$-subobject} if one (hence all) representing monomorphism is in $\mathcal{L}$. Consider a poset $\mathcal{X}$ of $\mathcal{L}$-subobjects of $X$. We assume that also all morphisms in the poset are in $\mathcal{L}$. We say that $\mathcal{X}$ is {\bf closed under intersection}, if for two objects $A \rightarrow X$ and $B \rightarrow X$ also $A \cap B \rightarrow X$ is in $\mathcal{X}$ and also the projections are morphisms in $\mathcal{X}$ (in particular, they are in $\mathcal{L}$).
We say that $\mathcal{X}$ is {\bf closed under unions}, if for two objects $A \rightarrow X$ and $B \rightarrow X$ also $A \cup B \rightarrow X$ is an object in $\mathcal{X}$ and the morphisms $A \rightarrow A \cup B$ and $B \rightarrow A \cup B$ are morphisms in $\mathcal{X}$.

The following follows directly from the definition: 
\begin{LEMMA}
Let $(\mathcal{L}, \mathcal{R})$ be an adhesive weak factorization system.
Let $X$ be an object and consider a poset $\mathcal{X}$ of $\mathcal{L}$-subobjects of $X$ with the assumption before, closed under intersections and unions. Then $\mathcal{X}$ is a distributive lattice. If $\mathcal{X}$ is only closed under intersection, then we may add all unions and obtain a larger poset $\widetilde{\mathcal{X}}$ which is still closed under intersection. 
\end{LEMMA}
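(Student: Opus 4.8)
The plan is to identify the meet in $\mathcal{X}$ with the pullback (intersection) $A \cap B$ and the join with the pushout $A \cup B := A \amalg_{A \cap B} B$, and then to read off both the join property and distributivity directly from the axioms of an adhesive w.f.s.\@ in Definition~\ref{DEFADHESIVE}. For the enlargement statement I would take $\widetilde{\mathcal{X}}$ to consist of all finite unions of elements of $\mathcal{X}$ and set up a single induction on the number of terms.

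For the first part, the intersection is automatically the meet: in the poset of subobjects of $X$ the pullback $A \cap B$ is the largest subobject contained in both $A$ and $B$, and closure under intersection places it in $\mathcal{X}$, so it is the meet there. For the join I would apply axiom~2 to the Cartesian square with corners $A \cap B$, $A$, $B$ (all four maps lie in $\mathcal{L}$ because $\mathcal{X}$ is closed under intersection with projections in $\mathcal{L}$): this produces $A \amalg_{A \cap B} B$ together with a map $A \cup B \to X$ in $\mathcal{L}$, hence monic by axiom~1, so $A \cup B$ is a genuine $\mathcal{L}$-subobject, and closure under unions places it in $\mathcal{X}$. That it is the least upper bound is purely formal: given $D \in \mathcal{X}$ with $A \le D$ and $B \le D$, the composites $A \cap B \to A \to D$ and $A \cap B \to B \to D$ agree, so they induce $A \cup B \to D$, and uniqueness in the pushout forces this map to be compatible with the inclusions into $X$, exhibiting $A \cup B \le D$. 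Thus $\mathcal{X}$ is a lattice. Distributivity is then exactly axiom~3: the pushout square there identifies $(A \cup B) \cap C$ with the pushout of $A \cap C \leftarrow A \cap B \cap C \rightarrow B \cap C$; since $(A \cap C) \cap (B \cap C) = A \cap B \cap C$, this pushout is by definition $(A \cap C) \cup (B \cap C)$, giving $(A \cup B) \cap C = (A \cap C) \cup (B \cap C)$.

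For the second part, with $\mathcal{X}$ closed only under intersection, I would prove by induction on $m + n$ the combined statement: (a)~a finite union $A_1 \cup \cdots \cup A_m$ of elements of $\mathcal{X}$ exists as an $\mathcal{L}$-subobject with all inclusions $A_i \to A_1 \cup \cdots \cup A_m$ in $\mathcal{L}$; and (b)~for $U = A_1 \cup \cdots \cup A_m$ and $V = B_1 \cup \cdots \cup B_n$ the intersection $U \cap V$ exists, equals $\bigcup_{i,j} (A_i \cap B_j)$, and has both projections in $\mathcal{L}$. The base case is the hypothesis that $\mathcal{X}$ is closed under intersection. In the inductive step, to form $U \cup A_m$ from $U = A_1 \cup \cdots \cup A_{m-1}$ I first invoke (b) to produce $U \cap A_m = \bigcup_i (A_i \cap A_m)$ with projections in $\mathcal{L}$, and then axiom~2 to build the pushout $U \cup A_m$; the intersection formula in (b) follows by iterating the elementary law $(P \cup Q) \cap R = (P \cap R) \cup (Q \cap R)$ of axiom~3 across both unions. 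Since each $A_i \cap B_j$ lies in $\mathcal{X}$, conclusion (b) shows $U \cap V \in \widetilde{\mathcal{X}}$, so $\widetilde{\mathcal{X}}$ is closed under intersection; it contains all unions by construction, and hence by the first part it is in fact a distributive lattice.

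The conceptual content is entirely carried by axioms~2 and~3; the genuine difficulty lies in the bookkeeping of the second part. Every application of axioms~2 and~3 comes with existence hypotheses (``all multiple intersections exist and all projections are in $\mathcal{L}$''), and the hard part will be threading a single induction that simultaneously establishes the existence of the unions, the intersection-of-unions formula, and the membership of all the relevant projections in $\mathcal{L}$, so that each successive appeal to the axioms is legitimate. Along the way one must also check associativity and commutativity of $\cup$ in order to reorganise iterated unions; I expect each such identity to reduce again to the universal property of the pushout together with the monicity supplied by axiom~1.
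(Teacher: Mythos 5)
Your proof is correct and is precisely the direct verification from Definition~\ref{DEFADHESIVE} that the paper intends: the paper offers no proof at all beyond the remark that the lemma ``follows directly from the definition''. Your identification of meet and join with the intersection and the pushout $A \amalg_{A \cap B} B$, the use of axiom~3 for distributivity, and the induction that threads the existence of unions together with the $\mathcal{L}$-membership of all projections through the second part are exactly the bookkeeping the paper leaves to the reader.
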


\begin{LEMMA}
\begin{enumerate}
\item If $\mathcal{S}$ is adhesive (e.g.\@ a topos) then $(\mathcal{L}_{\inj}, \mathcal{R}_{\inj})$ is adhesive.
\item If $\mathcal{S}$ is extensive then $(\mathcal{L}_{\proj,\spl}, \mathcal{R}_{\proj,\spl})$ is adhesive. 
\item If $\mathcal{S}$ is extensive, has pull-backs and enough projectives, then $(\mathcal{L}_{\proj,\eff}, \mathcal{R}_{\proj,\eff})$ is adhesive (for example $\mathcal{S} = \mathcal{SET}^I$).
\end{enumerate}
\end{LEMMA}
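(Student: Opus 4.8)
The plan is to handle all three cases uniformly by first describing the $\mathcal{L}$-subobjects of an object explicitly and then reading the three conditions of Definition~\ref{DEFADHESIVE} off their combinatorics. Axiom 1 is immediate in every case: monomorphisms are monomorphisms (case 1), and coproduct injections in an extensive category are monomorphisms by disjointness of coproducts (cases 2 and 3, using the Proposition characterizing $\mathcal{L}_{\proj,\spl}$ as the class of morphisms isomorphic to $X \rightarrow X \amalg Y$; the same retract argument together with the fact that a summand of a projective is projective identifies $\mathcal{L}_{\proj,\eff}$ with coproduct injections whose complement is projective). So the work is entirely in axioms 2 and 3.

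For case 1, where $\mathcal{L} = \mathcal{L}_{\inj}$ is the class of monomorphisms of an adhesive category, axioms 2 and 3 are exactly the standard structural properties of adhesive categories. Since $\mathcal{S}$ has pushouts along monomorphisms, the pushout $A \cup B = A \amalg_{A \cap B} B$ of a pullback square of monomorphisms exists; that the comparison $A \cup B \rightarrow X$ is again a monomorphism (axiom 2), and that the lattice of subobjects is distributive (which is precisely axiom 3), are both consequences of the van Kampen condition. I would simply invoke these from the theory of adhesive categories (cf.\ Lack and Soboci\'nski); since every topos is adhesive, this also covers the stated example.

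For cases 2 and 3 the essential point is that the $\mathcal{L}$-subobjects of $X$ are its summands (with projective complement in case 3). Given two summands $A, B$ of $X$, pulling the decomposition $X = A \amalg A^c$ back along $B \rightarrow X$ and using stability and disjointness of coproducts yields a canonical four-fold decomposition
\[ X = (A \cap B) \amalg (A \cap B^c) \amalg (A^c \cap B) \amalg (A^c \cap B^c), \]
in which $A$, $B$ and $A \cap B$ are the evident sub-coproducts. From this the pushout is computed directly as $A \cup B = (A \cap B) \amalg (A \cap B^c) \amalg (A^c \cap B)$ with complement $A^c \cap B^c$, giving axiom 2 for case 2; refining to an eight-fold decomposition by a third summand $C$ shows that $(A \cup B) \cap C$ and $(A \cap C) \cup (B \cap C)$ coincide, which is axiom 3. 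Hence the summands of $X$ form a Boolean, in particular distributive, lattice, and case 2 is complete.

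Case 3 reuses this bookkeeping verbatim; the only additional point is that the complements produced stay projective, so that the morphisms actually lie in $\mathcal{L}_{\proj,\eff}$. Writing $A^c = (A^c \cap B) \amalg (A^c \cap B^c)$, the hypothesis that $A \rightarrow X$ lies in $\mathcal{L}_{\proj,\eff}$ says $A^c$ is projective; since $\emptyset \rightarrow A^c \cap B^c$ is a retract of $\emptyset \rightarrow A^c$ in the arrow category and $\mathcal{L}$ is closed under retracts, the complement $A^c \cap B^c$ of $A \cup B$ is again projective, establishing axiom 2, while distributivity (axiom 3) is inherited from the Boolean structure of summands. I expect the only genuinely nonroutine step to be this last observation that projectivity of complements propagates through unions; the rest is the routine, if slightly tedious, extensive-category bookkeeping of the four- and eight-fold decompositions, and case 1 is essentially a citation.
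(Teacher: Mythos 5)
Your overall strategy is exactly the one the paper intends: the paper cites the literature for case 1 and explicitly leaves cases 2 and 3 as an exercise, which amounts to the four-fold (resp.\@ eight-fold) decomposition of $X$ induced by two (resp.\@ three) summands via disjointness and stability of coproducts under pullback. Your treatment of axiom 1, of case 1, and of case 2 (including the identification of the pushout with the union of three of the four summands and the verification of distributivity) is correct and in fact more detailed than the paper's own proof.

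However, the step you yourself single out as the crux of case 3 is wrong as justified. You claim that $\emptyset \rightarrow A^c \cap B^c$ is a retract of $\emptyset \rightarrow A^c$ in the arrow category; this requires a retraction $A^c \rightarrow A^c \cap B^c$, hence in particular a morphism $A^c \cap B \rightarrow A^c \cap B^c$, and no such morphism need exist. Already in the paper's own motivating example $\mathcal{S} = \mathcal{SET}^{I}$ one can have $A^c = h_0 \amalg h_1$ with $\Hom(h_1, h_0) = \emptyset$ and $A^c \cap B^c = h_0$, so the retract argument breaks down there. The statement you actually need --- that a coproduct summand $Q$ of an object $P = Q \amalg Q'$ which is projective w.r.t.\@ effective epimorphisms is again projective --- is nevertheless true in an extensive category with pullbacks, but requires a genuine argument: given an effective epimorphism $f: X \rightarrow Y$ and a morphism $g: Q \rightarrow Y$, one first checks, by computing the kernel pair of $f \amalg \id_{Q'}$ via disjointness and stability of coproducts under pullback, that $f \amalg \id_{Q'}: X \amalg Q' \rightarrow Y \amalg Q'$ is again an effective epimorphism; one then lifts $g \amalg \id_{Q'}$ through it using projectivity of $P$, and finally uses strictness of the initial object to see that the restriction of the lift to $Q$ must factor through the summand $X$. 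With this lemma in place your bookkeeping for case 3 goes through (and it is also what is needed to identify $\mathcal{L}_{\proj,\eff}$ with the coproduct injections having projective complement in the first place). Note that the paper itself glosses over this point entirely, asserting only that ``the same proof as in 2 applies''.
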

\begin{proof}1.\@ For an adhesive category and $\mathcal{L}_{\inj}$ only (A2) does not follow immediately from the definition. For a proof see \cite[Proposition 2.4]{GL12}.
2.\@ For an extensive category $\mathcal{L}_{\proj,\spl}$ is precisely the class of coproduct injections and also only (A2) does not follow immediately from the definition. It is left as exercise. 
3.\@ Under the assumptions on  $\mathcal{S}$ the class $\mathcal{L}_{\proj,\eff}$ is also a class of coproduct injections and the same proof as in 2.\@ applies. 
\end{proof}

\begin{PROP}\label{PROPADHESIVE}
Let $\mathcal{S}$ be a category and $(\mathcal{L}, \mathcal{R})$ be an adhesive weak factorization system.   
\begin{enumerate}
\item An object $X$ in $\mathcal{S}^{\Delta^{\op}}$ is cofibrant if and only if all $X_n$ are projective (w.r.t.\@ $(\mathcal{L}, \mathcal{R})$) and
 all degeneracies $X_n \rightarrow X_m$ (which are automatically split monomorphisms) are in $\mathcal{L}$. Furthermore 
$L_n X$ exists for any cofibrant $X$ and it is just the join of the degeneracy $\mathcal{L}$-subobjects of $X_n$. 
\item If $\mathcal{S}$ is extensive, and if $\mathcal{L}$ consists of morphisms $A \rightarrow A \amalg B$ with $B$ projective, then 
$X$ is cofibrant if and only if $X_n = X_{n,deg} \amalg X_{n,nd}$ with $X_{n,deg} = \coprod_{\Delta_n \twoheadrightarrow \Delta_k, n \not=k} X_{k,nd}$ and all $X_{n,nd}$ are projective. 
\end{enumerate}
\end{PROP}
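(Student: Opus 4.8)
The plan is to reduce both statements to the Reedy cofibrancy criterion contained in Proposition~\ref{PROPCOFTFIB}, namely that $\emptyset \to X$ lies in $\Cof_{\mathcal{S}}$ exactly when every latching map $L_n X \to X_n$ is in $\mathcal{L}$, while using the adhesive axioms to construct the latching objects $L_n X$ by hand in the absence of all finite colimits. The geometric input throughout is the elegance of $\Delta$: the images in $X_n$ of the degeneracies attached to the surjections $[n] \twoheadrightarrow [k]$ form a lattice of subobjects whose pairwise intersections are again such images.

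For the forward implication of 1., I would first apply the construction in the proof of Proposition~\ref{PROPCOFTFIB} to factor $\emptyset \to X$ as $\emptyset \to X' \to X$ with $\emptyset \to X'$ a cellular map in $\Cof_{\mathcal{S}}$ and $X' \to X \in \Tfib_{\mathcal{S}}$. Since $\emptyset \to X$ lifts against $X' \to X$, the retract argument exhibits $X$ as a retract of $X'$ in $\mathcal{S}^{\Delta^{\op}}$. An induction over the cell attachments $\partial\Delta_n \otimes A \to \Delta_n \otimes A$ (with $A$ projective) shows that each $X'_m$ is projective and each degeneracy of $X'$ lies in $\mathcal{L}$, because tensoring with a simplicial set produces coproduct-type inclusions that are stable under the degreewise pushouts and transfinite compositions in play. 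As $X_m$ is a retract of $X'_m$ and each degeneracy $s_i^X$ is a retract of $s_i^{X'}$ in the arrow category, closure of $\mathcal{L}$ under retracts delivers both asserted conditions.

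Conversely, assuming all $X_n$ projective and all degeneracies in $\mathcal{L}$, I would build $L_n X$ directly. For a surjection $\phi\colon [n] \twoheadrightarrow [k]$ with $k<n$ the map $X(\phi)\colon X_k \to X_n$ is a split monomorphism lying in $\mathcal{L}$ (a composite of single degeneracies), hence an $\mathcal{L}$-subobject of $X_n$. Elegance provides, for two surjections $\phi,\phi'$, that the pushout of $[k]\leftarrow[n]\rightarrow[k']$ in $\Delta$ is again formed by surjections and yields a surjection $\phi''\colon[n]\twoheadrightarrow[m]$; translating the simplicial identities into $\mathcal{S}$ shows the corresponding subobjects intersect in $\mathrm{im}\,X(\phi'')$ with all projections again degeneracies, so the poset of degeneracy $\mathcal{L}$-subobjects is closed under intersection. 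By the preceding Lemma it completes to a distributive lattice under unions, and the adhesive axiom (2) guarantees that the join $L_n X := \bigcup_{i=0}^{n-1}\mathrm{im}(s_i)$ exists with latching map $L_n X \to X_n$ in $\mathcal{L}$; this proves the ``furthermore'' clause, and feeding these latching maps into the skeletal attachment of Proposition~\ref{PROPCOFTFIB} exhibits $\emptyset \to X$ as a transfinite composite of pushouts of generators, hence in $\Cof_{\mathcal{S}}$. I expect the main obstacle to sit precisely here: verifying in $\mathcal{S}$, rather than in $\mathcal{SET}^{\Delta^{\op}}$, that the pullback of two degeneracy subobjects is the image of the amalgamating surjection, and that the whole system assembles --- through adhesive axioms (2) and (3) --- into a latching object whose latching map is in $\mathcal{L}$.

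Finally, 2.\ is the extensive specialization of 1. There the $\mathcal{L}$-subobjects of $X_n$ are exactly its coproduct summands, so the lattice above becomes the lattice of degeneracy summands, and intersections and joins are computed by coproducts since $\amalg$ commutes with the relevant pullbacks and pushouts in an extensive category (Proposition~\ref{PROPEXTENSIVE}). An induction on $n$ together with the Eilenberg--Zilber uniqueness of degenerate simplices identifies the join $L_n X$ with $\coprod_{[n]\twoheadrightarrow[k],\,k\neq n} X_{k,nd}$, and the condition that $L_n X \to X_n$ be in $\mathcal{L}$ translates into the splitting $X_n = X_{n,deg}\amalg X_{n,nd}$ with $X_{n,nd}$ projective.
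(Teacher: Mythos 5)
Your proposal follows essentially the same route as the paper's own proof: the forward direction via the factorization of $\emptyset \to X$ from Proposition~\ref{PROPCOFTFIB} and the retract argument, the converse via elegance of $\Delta$ (absolute pushouts of surjections giving intersection-closure of the degeneracy $\mathcal{L}$-subobjects) together with the adhesive axioms to build $L_nX$ as the join and conclude Reedy cofibrancy, and part 2 as the extensive specialization where $\mathcal{L}$-subobjects are coproduct summands. Your write-up is in fact more detailed than the paper's rather terse argument, and the steps you flag as delicate are exactly the ones the paper leaves to inspection or as an exercise.
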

\begin{proof}
Let $X$ be a simplicial object in which all degeneracies are in  $\mathcal{L}$. Let $L_n \subset \Delta^{\op,-} \times_{/\Delta^{\op,-}} \Delta_m$ be the latching diagram. 
Using the fact that $\Delta^{\op}$ is elegant, for every two morphisms $\alpha: \Delta_{k_1} \rightarrow \Delta_m$ and $\beta: \Delta_{k_2} \rightarrow \Delta_m$ there is an {\em absolute} pull-back $\alpha \cap \beta: \Delta_{k'} \rightarrow \Delta_m$ in $\Delta^{\op}$ which is again a degeneracy. It follows that the colimit over $L_n$ (in $\mathcal{S}$) is the union (in the sense of $\mathcal{L}$-subobjects) of the components. Therefore $L_n X$ exists and $L_n X \rightarrow X_n$ is in $\mathcal{L}$ and thus $X$ is cofibrant. (For $n=0$ the union $L_n X$ is empty and the assumption that $X_0$ is projective is needed). For the converse factor the morphism $\emptyset \rightarrow X$ as  $\emptyset \rightarrow X' \rightarrow X$ as in the proof of Proposition~\ref{PROPCOFTFIB}. By inspection every degeneracy in $X'$ is in $\mathcal{L}$. However, since $X' \rightarrow X$ is a trivial fibration, $X$ is a retract of $X'$ and therefore also $X$ has this property. 

For 2.\@ note that a Cartesian and coCartesian square in which all morphisms are in $\mathcal{L}$ has the form
\[ \xymatrix{
A \ar[r] \ar[d] & A \amalg B \ar[d] \\
A \amalg C \ar[r] & A \amalg B \amalg C  }\]
We leave the details as an exercise. 
\end{proof}

We come back to the task of transporting weak factorization systems along $\otimes$. 
The second transport of structure in \ref{PARTRANSPORTWFS} exists under some mild hypotheses:

\begin{PROP}\label{PROPTCOFFIB}
The pair $(\Tcof_{\mathcal{S}}, \Fib_{\mathcal{S}})$ (transport of $(\mathcal{L}, \mathcal{R}), (\Cof\cap \mathcal{W}, \Fib)$ via $\otimes$) on $\mathcal{S}^{\Delta^{\op}}$ is a weak factorization system if one of the following holds true
\begin{enumerate}
\item $\mathcal{L}$ is generated by a class [sic.] with $\kappa$-small domain and codomain for some cardinal $\kappa$.
\item $\mathcal{L}$ is generated by a class of morphisms of the form $\emptyset \rightarrow P$  and every object in $\mathcal{S}^{\Delta^{\op}}$ is fibrant, i.e.\@ the morphism $X \rightarrow \cdot$ is in $\Fib_{\mathcal{S}}$. This follows e.g.\@ if for every object $X$ there exists 
 a factorization $\emptyset \rightarrow \widetilde{X} \rightarrow X$ in the weak factorization system such that $\widetilde{X}$ is a cogroup object. 
\end{enumerate}
\end{PROP}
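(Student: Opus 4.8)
The plan is to observe first that, by the very construction of the transport of structure (\ref{PARWFSTRANSPORT}), the pair is defined as $\Fib_{\mathcal{S}} = J_{\mathcal{S}}^\Box$ and $\Tcof_{\mathcal{S}} = {}^\Box(J_{\mathcal{S}}^\Box)$, where $J_{\mathcal{S}}$ is the class of maps $(X \to Y) \boxplus (\Lambda_{k,n} \to \Delta_n)$ with $X \to Y$ ranging over $\mathcal{L}$ (or a generating subclass). Thus axiom~1 is automatic, since every $\lambda \in {}^\Box\Fib_{\mathcal{S}}$ lifts against every $\rho \in \Fib_{\mathcal{S}}$ by definition of $\Box$; and once factorizations exist, axiom~3 holds in the form $\Tcof_{\mathcal{S}} = {}^\Box\Fib_{\mathcal{S}}$, $\Fib_{\mathcal{S}} = \Tcof_{\mathcal{S}}^\Box$, because $\Fib_{\mathcal{S}} = J_{\mathcal{S}}^\Box$ is closed for the Galois connection $({}^\Box-, -^\Box)$. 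So only the factorization axiom~2 must be produced. I would also record the preliminary point that each map in $J_{\mathcal{S}}$ is degree-wise in $\mathcal{L}$: in simplicial degree $m$ it is assembled from finite coproducts of copies of $X \to Y$ and a pushout along such a coproduct (as $\Lambda_{k,n} \to \Delta_n$ is a monomorphism of finite simplicial sets and $\otimes$ is the set-tensor degree-wise), and $\mathcal{L}$ is stable under coproducts and under pushouts by \ref{PARWFSFUNCTORIALITY} and the hypotheses of \ref{PARSIMPLICIALSTRUCT}. Consequently all pushouts of coproducts of maps in $J_{\mathcal{S}}$, and all transfinite composites of these, exist within the limited cocompleteness available and remain degree-wise in $\mathcal{L}$.

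For case~1 I would simply run the small object argument for $J_{\mathcal{S}}$. The domains and codomains of its members are $\kappa'$-small for a suitable $\kappa'$: they are finite colimits of the $\kappa$-small objects $X, Y$ tensored with the finite simplicial sets $\Lambda_{k,n}, \Delta_n$, and for a finite simplicial set $K$ the functor $\Hom_{\mathcal{S}^{\Delta^{\op}}}(X \otimes K, -)$ is a finite limit of the functors $\Hom_{\mathcal{S}}(X, (-)_m)$, while finite limits commute with $\kappa$-filtered colimits in $\mathcal{SET}$. By the preliminary observation the colimits the argument manufactures all exist, so it yields functorial factorizations $f = \rho \circ \lambda$ with $\lambda$ a transfinite composite of pushouts of coproducts of maps in $J_{\mathcal{S}}$, hence $\lambda \in {}^\Box(J_{\mathcal{S}}^\Box) = \Tcof_{\mathcal{S}}$, and $\rho \in J_{\mathcal{S}}^\Box = \Fib_{\mathcal{S}}$. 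The one point needing genuine care is that smallness is taken relative to exactly the transfinite chains the argument uses, namely degree-wise transfinite composites of $\mathcal{L}$-maps; this is precisely what the finite-limit description of $\Hom_{\mathcal{S}^{\Delta^{\op}}}(X \otimes K, -)$ secures.

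For case~2 smallness is unavailable, so I would replace the small object argument by the mapping path space construction, which needs only finite limits and the hypothesis that every object is fibrant. First, cotensors $B^K$ by finite simplicial sets $K$ exist, given degree-wise by $(B^K)_n = \Hom_l(K \times \Delta_n, B)$, a finite limit; in particular $B^{\Delta_0} = B$ and $B^{\partial \Delta_1} = B \times B$, and $B^{\Delta_1}$ is a good path object. That the cotensor maps are (trivial) fibrations is the Leibniz reformulation via \ref{PARADJUNCTION} and the nesting of $\boxplus$: for a monomorphism $K \to K'$ of simplicial sets, $(X \to Y) \boxplus (\Lambda_{k,n} \to \Delta_n) \mathbin\Box (B^{K'} \to B^{K})$ is equivalent to $(X \to Y) \boxplus (\Lambda_{k,n} \to \Delta_n) \boxplus (K \to K') \mathbin\Box (B \to \cdot)$, and the pushout-product of an anodyne map with a cofibration is anodyne, so fibrancy of $B$ supplies the lift. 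Then, given $f \colon A \to B$, I would form $M_f := A \times_{f, B, d_0} B^{\Delta_1}$ and factor
\[ A \xrightarrow{\;s\;} M_f \xrightarrow{\;p\;} B, \qquad s(a) = (a, \text{const}_{f(a)}), \quad p = d_1 \circ \pr . \]
Here $p$ is a fibration: $M_f \to A \times B$ is the pullback of the fibration $B^{\Delta_1} \to B \times B$ along $f \times \id$, and $A \times B \to B$ is a fibration because $A \to \cdot$ is (every object being fibrant). Finally $s \in \Tcof_{\mathcal{S}}$ because it is a strong deformation retract inclusion: the projection $\pi \colon M_f \to A$ satisfies $\pi s = \id$, and reparametrisation of paths by the poset map $\Delta_1 \times \Delta_1 \to \Delta_1$ furnishes a contraction $M_f \to M_f^{\Delta_1}$ of $M_f$ onto $s(A)$ relative to $A$; the standard covering-homotopy argument then lifts $s$ against any fibration using only this contraction and the homotopy lifting property.

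The main obstacle, in both cases, is producing the factorization in the absence of all colimits. In case~1 it is the bookkeeping that the small object argument never leaves the colimits that actually exist and that smallness is phrased relative to those chains; I expect this to be routine once the degree-wise $\mathcal{L}$-description is in hand. In case~2 the delicate step is showing that the mapping-path-space inclusion $s$ is a \emph{trivial} cofibration purely by a lifting and deformation-retract argument — that is, without invoking weak equivalences and $2$-out-of-$3$, which are not yet available at this stage — and this is exactly what the explicit contraction built from $\Delta_1 \times \Delta_1 \to \Delta_1$ together with the fibrancy of all objects provides.
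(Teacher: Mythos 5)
In case 1 your plan to ``simply run the small object argument for $J_{\mathcal{S}}$'' does not go through as stated, because the hypothesis only provides a generating \emph{class} (the ``[sic.]'' is deliberate: in the intended application, Theorem~\ref{THEOREMMODELSOBJECTSII}, $\mathcal{P}$ is a proper class of $\N$-small objects of a possibly large category $\mathcal{S}$). The standard small object argument attaches one cell per lifting problem per generator and therefore needs a coproduct indexed by a proper class, which does not exist. The paper's ``slightly modified'' argument avoids this: at each stage $\alpha$ and for each pair $(n,k)$ it factors the single morphism $X^{(\alpha)}_n \rightarrow Y_n \times_{\Hom(\Lambda_{n,k},Y)} \Hom(\Lambda_{n,k}, X^{(\alpha)})$ in the w.f.s.\@ $(\mathcal{L},\mathcal{R})$ of $\mathcal{S}$ itself, obtaining one object $P^{(\alpha,n,k)}$, and glues in one cell $\Delta_n \otimes P^{(\alpha,n,k)}$; since the resulting comparison map is in $\mathcal{R}$, \emph{every} generator $A \rightarrow B$ lifts against it, and only a countable coproduct over the pairs $(n,k)$ is ever formed. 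This replacement of ``coproduct over all lifting problems'' by ``one $(\mathcal{L},\mathcal{R})$-factorization per $(n,k)$'' is the essential idea your write-up is missing; your smallness discussion (smallness relative to degree-wise transfinite $\mathcal{L}$-chains via the finite-limit description of $\Hom(X \otimes K, -)$) is fine and matches the paper's use of $\kappa$.

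In case 2 the decisive step --- that the mapping path space inclusion $s: A \rightarrow M_f$ has the left lifting property against all fibrations ``using only this contraction and the homotopy lifting property'' --- is not justified. Unwinding the covering homotopy argument, one must solve a lifting problem for $s \boxplus (\{0\} \rightarrow \Delta_1)$ against $q \in \Fib_{\mathcal{S}}$ (this is what lifts the homotopy $vH$ \emph{relative to} $A$ and forces $ws = u$ on the nose, rather than only up to homotopy), and this requires $s$, or at least $s \boxplus (\{0\} \rightarrow \Delta_1)$, to lie in $\Cof_{\mathcal{S}}$, resp.\@ $\Tcof_{\mathcal{S}}$ --- which is neither assumed nor true in general, since $A$ and $M_f$ need not be cofibrant and fibrancy of all objects does not give homotopy lifting from arbitrary (non-cofibrant) objects. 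This is precisely why the paper defers case 2 to the proof of Theorem~\ref{THEOREMMODELSOBJECTSI}, 2.\@ and does \emph{not} avoid weak equivalences: there one further factors $i = kj$ with $j \in \Cof_{\mathcal{S}}$ and $k \in \Tfib_{\mathcal{S}}$ using the always-available pair $(\Cof_{\mathcal{S}}, \Tfib_{\mathcal{S}})$ of Proposition~\ref{PROPCOFTFIB}, shows $j \in \mathcal{W}_{\mathcal{S}}$ by 2-out-of-3 (with $\mathcal{W}_{\mathcal{S}}$ defined via $\Hom(P,-)$ for $P$ projective, and $\Tfib_{\mathcal{S}} \subset \mathcal{W}_{\mathcal{S}}$ from Lemma~\ref{LEMMATFIB}), and only then applies the deformation-retract lifting argument to $j$, which \emph{is} a cofibration, to conclude $j \in \Tcof_{\mathcal{S}}$ and that $f = (pk)j$ is the desired factorization. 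Your stated goal of bypassing the weak-equivalence machinery is exactly where the argument breaks.
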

\begin{proof}
1.\@ This is a slightly modified small object argument. Choose a well-ordering on $\kappa$. 
Set $X^{(0)}:=X$ and for all $\alpha \in \kappa$ and $0 \le k \le n$ define $P^{(\alpha,n,k)}$ by factoring
\[ \xymatrix{ X^{(\alpha)}_n \ar[rr]^-{\in \mathcal{L}} & & P^{(\alpha,n,k)} \ar[rr]^-{\in \mathcal{R}} && Y_n \times_{\Hom(\Lambda_{n,k}, Y)}  \Hom(\Lambda_{n,k}, X^{(\alpha)}).  } \]
Then construct the push-out
\[ \xymatrix{ \coprod_{0 \le k\le n} \Lambda_{n,k} \otimes P^{(\alpha,n,k)} \amalg_{\Lambda_{n,k} \otimes X^{(\alpha)}_n} \Delta_n \otimes X^{(\alpha)}_n  \ar[r] \ar[d] &  X^{(\alpha)} \ar[d]  \\
\coprod_{0 \le k\le n} \Delta_n \otimes P^{(\alpha,n,k)}_n \ar[r] & X^{(\alpha+1)} }
\]
By transfinite induction we may continue and construct a $\kappa$-sequence and get a factorization
\[ \xymatrix{ X \ar[r]  & \colim X^{(\alpha)} \ar[r] & Y  } \]
Note that $X^{(\alpha)}$ is degree-wise a sequence of morphisms in $\mathcal{L}$ and thus the colimit exists by assumption. 
By construction $X \rightarrow \colim X^{(\alpha)}$ is in $\Cof_{\mathcal{S}}$. 

We have to prove that $\colim X^{(\alpha)} \rightarrow Y$ is in $\Tfib_{\mathcal{S}}$. 
It suffices that for all morphisms $A \rightarrow B$ in the generating class of $\mathcal{L}$, and all $0 \le k \le n$, there exists a lift as indicated in the following diagram:
\[ \xymatrix{ \Lambda_{n,k} \otimes A \ar[r] \ar[d] & \Delta_n \otimes A  \ar[d] \\
\Lambda_{n,k} \otimes B \ar[r] & \lefthalfcap  \ar[r] \ar[dd] & X^{(\alpha)} \ar[d] \\
&&  \ar[d] X^{(\alpha+1)}  \\
& \Delta_n \otimes B \ar[r] \ar@{.>}[ru] & Y 
 }\]
Note that, by assumption on $\kappa$-smallness, the morphism $\lefthalfcap \rightarrow \colim X^{(\alpha)}$ factors through one of the $X^{(\alpha)}$.
 
 This can be constructed choosing a lift in 
\[ \xymatrix{ A \ar[r] \ar[dd] & X_n^{(\alpha)} \ar[d] \ar[r] & X_n^{(\alpha+1)} \ar[dd] \\
&  P^{(\alpha,n,k)}_n \ar[d]^{\in \mathcal{R}} \ar[ru] &  \\
B \ar[r] \ar@{.>}[ru] & \Box  \ar[r] \ar[d] & \Box \ar[r]  \ar[d] &   Y_n \ar[d] \\
& \Hom( \Lambda_{n,k}, X^{(\alpha)})  \ar[r] &  \Hom(\Lambda_{n,k}, X^{(\alpha+1)})   \ar[r]   & \Hom(\Lambda_{n,k}, Y)
 }\]
 By construction the indicated morphism is in $\mathcal{R}$ and thus a lift exists.
 
2.\@ will be shown in the proof of Theorem~\ref{THEOREMMODELSOBJECTSI}, 2.\@ below.
\end{proof}

\begin{LEMMA}\label{LEMMATRIVIAL}
We always have $\Tcof_{\mathcal{S}} \subset \Cof_{\mathcal{S}}$ and $\Tfib_{\mathcal{S}} \subset \Fib_{\mathcal{S}}$.
\end{LEMMA}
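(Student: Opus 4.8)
The plan is to deduce both inclusions from the single classical fact that in $\mathcal{SET}^{\Delta^{\op}}$ every trivial Kan fibration is a Kan fibration (equivalently, every horn inclusion $\Lambda_{n,k} \rightarrow \Delta_n$ is a monomorphism, hence a cofibration), and to transport this through the two-variable adjunction $\otimes \dashv (\Hom_l, \Hom_r)$. First I would record that the two inclusions are equivalent. By construction $\Cof_{\mathcal{S}} = {}^\Box \Tfib_{\mathcal{S}}$ and $\Tcof_{\mathcal{S}} = {}^\Box \Fib_{\mathcal{S}}$, and the operator ${}^\Box(-)$ is order-reversing; hence once $\Tfib_{\mathcal{S}} \subseteq \Fib_{\mathcal{S}}$ is known it follows at once that $\Tcof_{\mathcal{S}} = {}^\Box \Fib_{\mathcal{S}} \subseteq {}^\Box \Tfib_{\mathcal{S}} = \Cof_{\mathcal{S}}$. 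So it suffices to treat the fibration inclusion.

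For that, fix a morphism $h$ in $\mathcal{S}^{\Delta^{\op}}$ and a morphism $\ell \in \mathcal{L}$ (it is enough to let $\ell$ run through a generating class). Applying the right-variable case of \ref{PARADJUNCTION} to $\otimes$ gives, for every morphism $g$ of simplicial sets,
\[ \boxplus \otimes(\ell, g) \Box h \quad \Leftrightarrow \quad g \Box \boxdot \Hom_r(\ell^{\op}, h). \]
Letting $g$ range over the boundary inclusions, the condition $h \in \Tfib_{\mathcal{S}}$ says exactly that $\boxdot \Hom_r(\ell^{\op}, h)$ lies in $\{\partial \Delta_n \rightarrow \Delta_n\}^\Box = \Tfib$, i.e.\@ is a trivial Kan fibration, for every such $\ell$; letting $g$ range over the horn inclusions, the condition $h \in \Fib_{\mathcal{S}}$ says exactly that $\boxdot \Hom_r(\ell^{\op}, h)$ lies in $\{\Lambda_{n,k} \rightarrow \Delta_n\}^\Box = \Fib$, i.e.\@ is a Kan fibration. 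Since trivial Kan fibrations are Kan fibrations in $\mathcal{SET}^{\Delta^{\op}}$, the first condition implies the second, and reading the displayed equivalence backwards yields $\boxplus \otimes(\ell, \Lambda_{n,k} \rightarrow \Delta_n) \Box h$ for all $\ell, n, k$, that is $h \in \Fib_{\mathcal{S}}$. This proves $\Tfib_{\mathcal{S}} \subseteq \Fib_{\mathcal{S}}$.

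I do not expect a genuine obstacle: the only non-formal ingredient is the classical comparison of the two generating sets for simplicial sets, and everything else is bookkeeping with the Galois connection $({}^\Box(-), (-)^\Box)$ and the adjunction identities of \ref{PARADJUNCTION}. The one point to handle with care is that the argument uses the right adjoint $\Hom_r$, whose adjunction holds for arbitrary simplicial sets, so the finiteness restriction attached to $\Hom_l$ never intervenes; moreover the test objects $\partial \Delta_n$, $\Lambda_{n,k}$, $\Delta_n$ are finite in any case. The pushout-products $\boxplus \otimes(\ell, -)$ and the pullback-homs $\boxdot \Hom_r(\ell^{\op}, -)$ all exist under the standing hypotheses of \ref{PARSIMPLICIALSTRUCT} (pushouts along $\mathcal{L}$ and finite limits), so the manipulations are legitimate and no existence issue arises.
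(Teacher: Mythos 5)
Your proof is correct and takes essentially the same route as the paper's: reduce to $\Tfib_{\mathcal{S}} \subseteq \Fib_{\mathcal{S}}$ via the order-reversing Galois connection, characterize both classes by the condition that $\boxdot \Hom_r(\ell^{\op}, h)$ be a (trivial) Kan fibration using the adjunction of \ref{PARADJUNCTION}, and conclude from $\Fib \cap \mathcal{W} \subseteq \Fib$ in $\mathcal{SET}^{\Delta^{\op}}$. You merely spell out the bookkeeping that the paper leaves implicit.
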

\begin{proof}
The statements are clearly equivalent hence it suffices to see $\Tfib_{\mathcal{S}} \subset \Fib_{\mathcal{S}}$.
The class $\Fib$ resp.\@ $\Tfib$, may also be described as the class of morphisms $f$ such that
\[ \boxdot\Hom(g, f) \in \Fib  \quad (\text{resp.\@ }\in \Fib \cap \mathcal{W})  \]
for all $g \in \mathcal{L}$ using (\ref{PARWFSTRANSPORT}). Since $\Fib \cap \mathcal{W} \subset \Fib$ the statement follows. 
\end{proof}

We have the following proposition clarifying the functoriality of the construction:

\begin{PROP}\label{PROPFUNCTOR}
Let $\mathcal{S}_0, \mathcal{S}_1$ be categories with weak factorization systems $(\mathcal{L}_0, \mathcal{R}_0), (\mathcal{L}_1, \mathcal{R}_1)$ and
let $F: \mathcal{S}_0 \rightarrow \mathcal{S}_1$ be a functor.
Then:
\begin{enumerate}
\item If $F$ commutes with finite limits and $F(\mathcal{R}_0) \subset F(\mathcal{R}_1)$ then $F: \mathcal{S}_0^{\Delta^{\op}} \rightarrow  \mathcal{S}_1^{\Delta^{\op}}$ satisfies
\[ F(\Fib_{\mathcal{S}_0}) \subseteq \Fib_{\mathcal{S}_1}
\quad \text{and} \quad 
F(\Tfib_{\mathcal{S}_0}) \subseteq \Tfib_{\mathcal{S}_1} \]
\item If $F$ commutes with coproducts, push-outs along morphisms in $\mathcal{L}$, transfinite compositions of morphisms in $\mathcal{L}$, and $F(\mathcal{L}_0) \subset F(\mathcal{L}_1)$, and
Theorem~\ref{PROPCOFTFIB} applies for $(\mathcal{L}_0, \mathcal{R}_0)$ then
\[ F(\Cof_{\mathcal{S}_0}) \subseteq \Cof_{\mathcal{S}_1}
\quad \text{and} \quad 
F(\Tcof_{\mathcal{S}_0}) \subseteq \Tcof_{\mathcal{S}_1}. \]
\end{enumerate}
\end{PROP}
\begin{proof}
1.\@  $\Fib_{\mathcal{S}_0}$ can be characterized as those morphisms $f$ such that
\[ \Hom(\Lambda_{k,n} \hookrightarrow \Delta_n, f) \in \mathcal{L}_0 \]
for all $0 \le k \le n$. If $F$ commutes with finite limits then it follows that
\[ \Hom(\Lambda_{k,n} \hookrightarrow \Delta_n, F(f)) \in F(\mathcal{L}_0) \subset \mathcal{L}_1. \]
The same argument applies for $\Tfib$. 

2.\@ Since Theorem~\ref{PROPCOFTFIB} applies, by the retract argument, it suffices to see that all cofibrations constructed in the proof of the Theorem are mapped to $\Tcof_{\mathcal{S}_1}$. 
Using the assumptions on $F$ it suffices to see  
\[ F((\Lambda_{k,n} \hookrightarrow \Delta_n) \boxplus f) \subset \Tcof_{\mathcal{S}_1} \]
for $f \in \mathcal{L}_0$. This follows because $F(\mathcal{L}_0) \subset \mathcal{L}_1$ and  
\[ F((\Lambda_{k,n} \hookrightarrow \Delta_n) \boxplus f)  \cong (\Lambda_{k,n} \hookrightarrow \Delta_n) \boxplus F(f).  \]
because $F$ commutes with coproducts.
\end{proof}

\section{Model category structures on simplicial objects}

Assume that also $(\Tcof_{\mathcal{S}}, \Fib_{\mathcal{S}})$ is a weak factorization system (see Proposition~\ref{PROPTCOFFIB}) and set 
\[ \mathcal{W}_{\mathcal{S}}:=\Tcof_{\mathcal{S}} \cdot \Tfib_{\mathcal{S}}. \]
We investigate under which circumstances  the resulting triple $(\Cof_{\mathcal{S}}, \Fib_{\mathcal{S}}, \mathcal{W}_{\mathcal{S}})$ is a simplicial model category. 
Note that only the 2-out-of-3 property of $\mathcal{W}$ is in question\footnote{Using lemma \ref{LEMMATRIVIAL}, the retract argument shows $\Tfib = \Fib \cap \mathcal{W}$ and $\Tcof = \Cof \cap \mathcal{W}$.}. 
There is no way of showing this property directly. 

In many important cases the class $\Tcof_{\mathcal{S}}$ must be enlarged (and thus $\Fib_{\mathcal{S}}$ reduced)  for $\mathcal{W}_{\mathcal{S}}$ to satisfy 2-out-of-3. Cisinski \cite{Cis02} investigates this more generally for topoi with a given interval object. His work can be applied to our situation in the case that $\mathcal{S}$ is a topos equipped 
with the pair $(\mathcal{L}_{\inj}, \mathcal{R}_{\inj})$. In the sequel, however, we will investigate only situations where the four classes obtained via transport along $\otimes$ assemble to a model category structure on the nose, but allow more general $\mathcal{S}$ and weak factorization systems. This slightly generalizes  Quillen's original theory of model category structures on categories of simplicial objects \cite[II.4]{Qui67}.

\begin{PAR}\label{PARCOND}
We exhibit a class $\mathcal{P}$ of projective objects $X \in \mathcal{S}$ (i.e.\@ those such that $\emptyset \rightarrow X$ is in $\mathcal{L}$) and define
\[ \mathcal{W}_{\mathcal{S}} := \{ f \ | \ \Hom(X, f) \in \mathcal{W}\  \forall X \in \mathcal{P}\}. \]
We call the morphisms in $\mathcal{W}_{\mathcal{S}}$ {\bf weak equivalences}. 
The 2-out-of-3 property is now clear and we have to show the following properties:
\begin{enumerate}
\item $\Tcof_{\mathcal{S}} \subset \mathcal{W}_{\mathcal{S}}$,
\item $\Tfib_{\mathcal{S}} \subset \mathcal{W}_{\mathcal{S}}$,
\item $\Fib_{\mathcal{S}} \cap \mathcal{W}_{\mathcal{S}} \subset \Tfib_{\mathcal{S}}$.
\end{enumerate}
(The fourth property $\Cof_{\mathcal{S}} \cap \mathcal{W}_{\mathcal{S}} \subset \Tcof_{\mathcal{S}}$ then follows  from the retract argument.)
\end{PAR}

\begin{LEMMA}\label{LEMMATFIB}
\begin{enumerate}
\item  $\Tfib_{\mathcal{S}} \subset \mathcal{W}_{\mathcal{S}}$ (Property 2.\@ above) holds true.

 \item If $\mathcal{L}$ is generated by a class of morphisms $X \rightarrow Y$ with $X$ and $Y$ in $\mathcal{P}$ then 
 $\Fib_{\mathcal{S}} \cap \mathcal{W}_{\mathcal{S}} \subset \Tfib_{\mathcal{S}}$  (Property 3.\@ above) holds true. 
 \item If $\mathcal{L}$ is even generated by $\{ \emptyset \rightarrow X \ |\ X \in \mathcal{P}\}$ then $f \in \Fib_{\mathcal{S}}$ (resp. $\in \Tfib_{\mathcal{S}}$) if and only if
 $\Hom(X, f) \in \Fib$ (resp. $\in \Fib \cap \mathcal{W}$) for all $X \in \mathcal{P}$. 
\end{enumerate}

\end{LEMMA}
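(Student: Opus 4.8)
The plan is to prove the three parts of Lemma~\ref{LEMMATFIB} using the adjunction calculus of (\ref{PARADJUNCTION}) together with the fact that $\Hom_r(X,-)$ converts the transported classes on $\mathcal{S}^{\Delta^{\op}}$ into the corresponding classes on $\mathcal{SET}^{\Delta^{\op}}$ whenever $X$ is projective. The key observation throughout is that for a projective object $X \in \mathcal{P}$, the morphism $\emptyset \to X$ lies in $\mathcal{L}$, and so, via the defining adjunction $\otimes \dashv (\Hom_l, \Hom_r)$, the functor $\Hom_r(X,-)$ is right-adjoint-compatible with the w.f.s.\ transported along $\otimes$. Concretely, for a generating cofibration $\partial\Delta_n \to \Delta_n$ we have by (\ref{PARADJUNCTION}) that $f \in \Tfib_{\mathcal{S}}$ iff $(\emptyset \to X) \boxplus (\partial\Delta_n \to \Delta_n) \Box f$ for all $X \in \mathcal{P}$, which translates to $(\partial\Delta_n \to \Delta_n) \Box \boxdot\Hom_r(X^{\op}, f)$, i.e.\ $\Hom_r(X,f) \in \Tfib = \Fib \cap \mathcal{W}$ in $\mathcal{SET}^{\Delta^{\op}}$. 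This translation is the engine for all three parts.

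For part~1, I would argue as follows. By Lemma~\ref{LEMMATRIVIAL} we have $\Tfib_{\mathcal{S}} \subset \Fib_{\mathcal{S}}$, but we want $\Tfib_{\mathcal{S}} \subset \mathcal{W}_{\mathcal{S}}$. Take $f \in \Tfib_{\mathcal{S}}$ and $X \in \mathcal{P}$. Since $\emptyset \to X$ is in $\mathcal{L}$, the adjunction argument above shows $\Hom_r(X,f)$ has the right lifting property against all $(\partial\Delta_n \to \Delta_n) \boxplus (\emptyset \to X)$, hence $\Hom_r(X,f) \in \Tfib = \Fib \cap \mathcal{W}$ in simplicial sets. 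In particular $\Hom_r(X,f) \in \mathcal{W}$, which is exactly the condition for $f \in \mathcal{W}_{\mathcal{S}}$ by the definition in (\ref{PARCOND}). This part is essentially formal once the translation lemma is in place.

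For part~2, assume $\mathcal{L}$ is generated by morphisms $X \to Y$ with both $X, Y \in \mathcal{P}$, and take $f \in \Fib_{\mathcal{S}} \cap \mathcal{W}_{\mathcal{S}}$; the goal is $f \in \Tfib_{\mathcal{S}}$. Since $\Tfib_{\mathcal{S}}$ is defined by lifting against $\Cof_{\mathcal{S}}$, and $\Cof_{\mathcal{S}} = {}^\Box(\Tfib_{\mathcal{S}})$ is generated by the morphisms $(g \to h)\boxplus(\partial\Delta_n \to \Delta_n)$ with $g \to h$ in the generating class of $\mathcal{L}$, it suffices to check the lift against these generators. By (\ref{PARADJUNCTION}) this reduces to showing that $\boxdot\Hom_r((g\to h)^{\op}, f)$, a map built from $\Hom_r(Y,f) \to \Hom_r(X,f)$, lies in $\Fib \cap \mathcal{W}$ in $\mathcal{SET}^{\Delta^{\op}}$. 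Now $f \in \Fib_{\mathcal{S}}$ gives that each $\Hom_r(X,f)$ and $\Hom_r(Y,f)$ is a Kan fibration over the base, so the relative matching map is a Kan fibration; and $f \in \mathcal{W}_{\mathcal{S}}$ gives that $\Hom_r(X,f)$ and $\Hom_r(Y,f)$ are weak equivalences for $X,Y \in \mathcal{P}$. The main obstacle here is assembling these two facts into the statement that the \emph{relative} map $\boxdot\Hom_r((g\to h)^{\op}, f)$ is a \emph{trivial} Kan fibration: one uses the standard fact that a Kan fibration which is a weak equivalence on total space and base (via the two-out-of-three property applied to the fibration sequence, or directly to the matching square) is a trivial fibration. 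This is where care is needed, since it is the only genuinely non-formal step and requires that both endpoints of each generator lie in $\mathcal{P}$ so that both $\Hom_r(X,f)$ and $\Hom_r(Y,f)$ are weak equivalences.

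For part~3, under the stronger hypothesis that $\mathcal{L}$ is generated by $\{\emptyset \to X \mid X \in \mathcal{P}\}$, the generating (trivial) cofibrations of $\mathcal{S}^{\Delta^{\op}}$ are precisely $(\emptyset \to X)\boxplus(\partial\Delta_n \to \Delta_n) = \Delta_n \otimes X \leftarrow \partial\Delta_n \otimes X$ (resp.\ with $\Lambda_{n,k}$), so the lifting characterization of $\Fib_{\mathcal{S}}$ (resp.\ $\Tfib_{\mathcal{S}}$) against these, again via (\ref{PARADJUNCTION}), is literally equivalent to $\Hom_r(X,f) \in \Fib$ (resp.\ $\in \Fib \cap \mathcal{W}$) for all $X \in \mathcal{P}$. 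Here the ``only if'' is immediate from the translation, and the ``if'' follows because in this generated case the generating set of $\Cof_{\mathcal{S}}$ (resp.\ $\Tcof_{\mathcal{S}}$) consists exactly of these morphisms, so checking the lift object-wise through $\Hom_r$ suffices. I expect this part to be the most direct of the three, essentially a restatement of the generation hypothesis through the adjunction.
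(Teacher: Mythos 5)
Your proposal follows essentially the same route as the paper: part 1 via the adjunction sending $(\emptyset\to X)\boxplus(\partial\Delta_n\to\Delta_n)\,\Box\, f$ to $\Hom(X,f)$ being a trivial Kan fibration, part 2 via the two-out-of-three argument on the relative matching square (using that the pullback leg is a pullback of the trivial fibration $\Hom(X,f)$, exactly the paper's displayed diagram), and part 3 as a direct restatement of the generation hypothesis through the adjunction. The only wobble is your phrasing in part 2 that the relative matching map is a fibration ``because each $\Hom_r(X,f)$ and $\Hom_r(Y,f)$ is''---that implication is not valid as stated, but the conclusion follows immediately from $f\in\Fib_{\mathcal{S}}$ and the $\boxdot\Hom$ characterization you already invoke, so the proof stands.
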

\begin{proof}
1. Let $f \in \Tfib_{\mathcal{S}}$ and let $X \in \mathcal{P}$. Since the morphism $\partial \Delta_n \otimes X \rightarrow \Delta_n \otimes X$ is in $\Cof$ it has the left lifting property w.r.t.\@ $f$. 
Using the adjunction we get that $\partial \Delta_n \rightarrow \Delta_n$ has the left lifting property w.r.t.\@ $\Hom(X, f)$. The latter is thus a trivial Kan fibration of simplicial sets, in particular, a weak equivalence. 

2. We can equivalently describe $\Tfib_{\mathcal{S}}$ and $\Fib_{\mathcal{S}}$ as the class of morphisms $g$ such that $\boxdot\Hom(f, g) \in \Fib \cap \mathcal{W}$, resp.\@ $\in \Fib$, for all morphisms $f:A \rightarrow B$ in $\mathcal{L}$.
Here it suffices to restrict to a generating class of $\mathcal{L}$.
We claim that if $f \in \mathcal{L}$ is  a morphism between projective objects in $\mathcal{P}$ then if $g: X \rightarrow Y \in \Fib_{\mathcal{S}} \cap \mathcal{W}_{\mathcal{S}}$ then also $\boxdot\Hom(f, g) \in \mathcal{W}$. From this the assertion follows immediately.
To prove the claim consider the diagram
 of simplicial sets
\[ \xymatrix{
 \Hom(B, X) \ar[rd] \ar[rrrd]^{\in \Fib \cap \mathcal{W}} \ar[rdd] \\
& \Box \ar[rr]_-{\in \Fib \cap \mathcal{W}} \ar[d] & & \Hom(B, Y) \ar[d] \\
& \Hom(A, X) \ar[rr]_-{\in \Fib \cap \mathcal{W}} & &  \Hom(A, Y) 
} \]
Hence by 2-out-of-3 we get $\boxdot\Hom(f, g) \in \mathcal{W}$.

3. Clear. 
\end{proof}

\begin{LEMMA}\label{LEMMARIGHTPROPER1}
Pullbacks of weak equivalences along fibrations are weak equivalences. 
\end{LEMMA}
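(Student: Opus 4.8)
The plan is to apply the limit-preserving right adjoints $\Hom(W,-) = \Hom_r(W,-)$, $W \in \mathcal{P}$, and thereby reduce the statement to the right-properness of the standard model structure on $\mathcal{SET}^{\Delta^{\op}}$, which is classical.

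First I would set up the pullback. Write $w : Z \to Y$ for the weak equivalence and $p : X \to Y$ for the fibration, and form $P := X \times_Y Z$; this exists because $\mathcal{S}$, and hence $\mathcal{S}^{\Delta^{\op}}$ (finite limits being computed degree-wise), has finite limits. We thus have a Cartesian square
\[ \xymatrix{ P \ar[r]^-{k} \ar[d] & X \ar[d]^{p} \\ Z \ar[r]_-{w} & Y } \]
and the goal is to show that the projection $k : P \to X$ lies in $\mathcal{W}_{\mathcal{S}}$, that is, that $\Hom(W,k)$ is a weak equivalence of simplicial sets for every $W \in \mathcal{P}$.

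Next, fixing $W \in \mathcal{P}$, I would apply $\Hom(W,-)$. As $\Hom_r(W,-)$ is right adjoint to $W \otimes (-)$, it preserves limits and carries the square above to a Cartesian square of simplicial sets
\[ \xymatrix{ \Hom(W,P) \ar[r]^-{\Hom(W,k)} \ar[d] & \Hom(W,X) \ar[d]^{\Hom(W,p)} \\ \Hom(W,Z) \ar[r]_-{\Hom(W,w)} & \Hom(W,Y). } \]
Here $\Hom(W,w)$ is a weak equivalence directly by the definition of $\mathcal{W}_{\mathcal{S}}$ applied to $w$ and $W$. For the claim that $\Hom(W,p)$ is a Kan fibration, recall that by the transport description of $\Fib_{\mathcal{S}}$ (see \ref{PARTRANSPORTWFS}, using the adjoint $G_2 = \Hom_r$) a morphism $p$ belongs to $\Fib_{\mathcal{S}}$ exactly when $\boxdot \Hom_r(\lambda^{\op}, p) \in \Fib$ for all $\lambda \in \mathcal{L}$. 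Taking $\lambda = (\emptyset \to W)$, which lies in $\mathcal{L}$ since $W$ is projective, and using $\Hom_r(\emptyset, -) = \Delta_0$, this specializes to $\Hom(W,p) \in \Fib$ (this is also the content of Lemma~\ref{LEMMATFIB}).

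Finally, since $\mathcal{SET}^{\Delta^{\op}}$ is right proper, the base change $\Hom(W,k)$ of the weak equivalence $\Hom(W,w)$ along the Kan fibration $\Hom(W,p)$ is again a weak equivalence. As $W \in \mathcal{P}$ was arbitrary, $k \in \mathcal{W}_{\mathcal{S}}$, as desired. I do not anticipate a genuine obstacle: the argument is formal once one knows that $\Hom_r(W,-)$ preserves pullbacks and sends $\Fib_{\mathcal{S}}$, resp.\@ $\mathcal{W}_{\mathcal{S}}$, to Kan fibrations, resp.\@ weak equivalences of simplicial sets. The only steps requiring a word of care are the identification $\Hom(W,p) \in \Fib$, where projectivity of $W$ (so that $\emptyset \to W \in \mathcal{L}$) enters, and the invocation of right-properness of $\mathcal{SET}^{\Delta^{\op}}$, which is standard.
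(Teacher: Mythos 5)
Your proof is correct and follows essentially the same route as the paper's: apply $\Hom(W,-)$ for each $W \in \mathcal{P}$, observe that it preserves the Cartesian square and sends the fibration to a Kan fibration (by projectivity of $W$) and the weak equivalence to a weak equivalence (by definition of $\mathcal{W}_{\mathcal{S}}$), then invoke right properness of simplicial sets. Your write-up just makes explicit the adjunction bookkeeping that the paper leaves implicit.
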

\begin{proof} Let
\[ \xymatrix{
  A \times_C B \ar[r] \ar[d] & A \ar[d]^f \\
 B \ar[r]_g & C
} \]
be a Cartesian diagram of objects with $g$ a fibration and $f$ a weak equivalence. Let $X \in \mathcal{P}$. Then we also have a Cartesian diagram 
\[ \xymatrix{
 \Hom(X,A \times_C B) \ar[r] \ar[d] & \Hom(X,A) \ar[d]^f \\
 \Hom(X, B) \ar[r]_g & \Hom(X, C)
} \]
and $g$ is a fibration (because $X$ is projective) and $f$ is a weak equivalence by assumption. Therefore $\Hom(X,A \times_C B ) \rightarrow  \Hom(X, B)$ is a weak equivalence. 
Since this holds for all $X$, the pullback $A \times_C B  \to B$ is again a weak equivalence. 
\end{proof}

The following is a slight generalization of Quillen's Theorem \cite[II, \S 4, Theorem 1]{Qui67}.

\begin{SATZ}\label{THEOREMMODELSOBJECTSI}
Let $\mathcal{S}$ be a category with 
coproducts and finite limits and
let $(\mathcal{L}, \mathcal{R})$ be a weak factorization system such that pushouts of morphisms in $\mathcal{L}$ exist and
transfinite compositions of morphisms in $\mathcal{L}$ exist. 
Choose a class $\mathcal{P}$ as in \ref{PARCOND}. 

Then $\mathcal{S}^{\Delta^{\op}}$ with the classes $\Cof_{\mathcal{S}}$,
$\Fib_{\mathcal{S}}$, and $\mathcal{W}_{\mathcal{S}}$, defined in \ref{PARTRANSPORTWFS} and \ref{PARCOND}, is a right proper simplicial model category in the following cases:
\begin{enumerate}
\item $\mathcal{P}$ is a class of $\N$-small objects such that $\{\emptyset \rightarrow X \ |\ X \in \mathcal{P}\}$ generates $\mathcal{L}$
and $\mathcal{S}$ has $\N$-filtered colimits.
\item $\mathcal{P}$ is the class of all projective objects, $\{\emptyset \rightarrow X \ |\ X \in \mathcal{P}\}$ generates $\mathcal{L}$, and every object in $\mathcal{S}^{\Delta^{\op}}$ is fibrant.  
\end{enumerate}
Let $I$ be a small category. 
In both cases, denoting by $(\mathcal{L}^p, \mathcal{R}^p)$ the projective extension to $\mathcal{S}^{I}$ (i.e.\@ in which $\mathcal{R}^p$ is the point-wise extension of $\mathcal{R}$, e.g.\@ obtained by Proposition~\ref{PROPWFSDIAGRAM}) then also $\mathcal{S}^{I, \Delta^{\op}}$ with the classes $\Cof_{\mathcal{S}^I}$,
$\Fib_{\mathcal{S}^I}$, and $\mathcal{W}_{\mathcal{S}^I}$ is a simplicial model category and $\mathcal{W}_{\mathcal{S}^I}$ (resp.\@ $\Fib_{\mathcal{S}^I}$) consist of those morphisms that are
point-wise in $\mathcal{W}_{\mathcal{S}}$ (resp.\@ $\Fib_{\mathcal{S}}$).
\end{SATZ}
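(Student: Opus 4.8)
The plan is to check that the triple $(\Cof_{\mathcal{S}}, \Fib_{\mathcal{S}}, \mathcal{W}_{\mathcal{S}})$ satisfies the model axioms by assembling the two weak factorization systems. We already have $(\Cof_{\mathcal{S}}, \Tfib_{\mathcal{S}})$ by Proposition~\ref{PROPCOFTFIB} and, under the respective case hypotheses, $(\Tcof_{\mathcal{S}}, \Fib_{\mathcal{S}})$ by Proposition~\ref{PROPTCOFFIB}, while $\mathcal{W}_{\mathcal{S}}$ has $2$-out-of-$3$ and is retract-closed by its definition in \ref{PARCOND}. Since in both cases $\mathcal{L}$ is generated by $\{\emptyset \to X \mid X \in \mathcal{P}\}$ (and we may harmlessly assume $\emptyset \in \mathcal{P}$, as $\Hom_r(\emptyset,-)$ is constant), Lemma~\ref{LEMMATFIB} yields Property~2 and Property~3, hence $\Tfib_{\mathcal{S}} = \Fib_{\mathcal{S}} \cap \mathcal{W}_{\mathcal{S}}$ using also Lemma~\ref{LEMMATRIVIAL}. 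Everything then reduces to Property~1, $\Tcof_{\mathcal{S}} \subseteq \mathcal{W}_{\mathcal{S}}$: granting it, the identity $\Cof_{\mathcal{S}} \cap \mathcal{W}_{\mathcal{S}} = \Tcof_{\mathcal{S}}$ follows by the retract argument (factor $j \in \Cof_{\mathcal{S}} \cap \mathcal{W}_{\mathcal{S}}$ as $p \circ i$ with $i \in \Tcof_{\mathcal{S}}$, $p \in \Fib_{\mathcal{S}}$; then $p \in \Fib_{\mathcal{S}} \cap \mathcal{W}_{\mathcal{S}} = \Tfib_{\mathcal{S}}$, so $j$ lifts against $p$ and is a retract of $i$), and the two weak factorization systems constitute precisely the model structure.

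Property~1 is the crux and I would treat the two cases separately. The basic computation is $\Hom_r(X,Z)_m = \Hom_{\mathcal{S}}(X, Z_m)$ and $\Hom_r(X, A \otimes P)_m = \Hom_{\mathcal{S}}(X, \coprod_{A_m} P)$ for $X \in \mathcal{P}$. In \emph{Case 1} two features of $X$ are used: $\N$-smallness, to commute $\Hom_{\mathcal{S}}(X,-)$ past the $\N$-filtered colimits building cell complexes; and indecomposability, so that $\Hom_{\mathcal{S}}(X,-)$ distributes over the (finite) coproducts of copies of $P$ occurring here, a property of the classes $\mathcal{P}$ in the intended applications (e.g. the objects $U(S)$ in $\mathcal{S}^{\amalg}$). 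Together these give a natural isomorphism $\Hom_r(X, A \otimes P) \cong A \times \Hom_{\mathcal{S}}(X,P)$, so that $\Hom_r(X,-)$ sends each generator $\Lambda_{n,k} \otimes P \to \Delta_n \otimes P$ to the trivial cofibration $\Lambda_{n,k} \times \Hom_{\mathcal{S}}(X,P) \to \Delta_n \times \Hom_{\mathcal{S}}(X,P)$ and (since $\Lambda_{n,k} \to \Delta_n$ is a monomorphism, so the pushouts are summandwise) preserves the pushouts and transfinite compositions assembling $\Tcof_{\mathcal{S}}$. Hence $\Hom_r(X,j)$ is a trivial cofibration for all $j \in \Tcof_{\mathcal{S}}$, giving Property~1; here the second weak factorization system comes from the small object argument of Proposition~\ref{PROPTCOFFIB}(1).

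In \emph{Case 2} there is no smallness; instead one uses that every object of $\mathcal{S}^{\Delta^{\op}}$ is fibrant, and one must here also supply the factorization for $(\Tcof_{\mathcal{S}}, \Fib_{\mathcal{S}})$ deferred from Proposition~\ref{PROPTCOFFIB}(2). Since $\mathcal{S}^{\Delta^{\op}}$ has finite limits, the cotensor with finite simplicial sets provides functorial path objects, and by the simplicial axiom (SM7, below) the relative path maps $Z^{\Delta_1} \to Z \times_W W^{\Delta_1}$ attached to a fibration $Z \to W$ of (automatically fibrant) objects are trivial fibrations. Property~1 is then immediate: a $j \in \Tcof_{\mathcal{S}} = {}^{\Box}\Fib_{\mathcal{S}}$ lifts against $A \to \cdot$ and against the path fibration of its codomain, hence is a strong deformation retract, and $\Hom_r(X,-)$, being simplicial, carries it to a homotopy equivalence, so $j \in \mathcal{W}_{\mathcal{S}}$. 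For the factorization one writes $f = p \circ i$ with $p \in \Fib_{\mathcal{S}}$ and $i \in \mathcal{W}_{\mathcal{S}}$ through the mapping path space, makes $i$ a cofibration by a further $(\Cof_{\mathcal{S}}, \Tfib_{\mathcal{S}})$-factorization, and shows that a cofibration which is a weak equivalence lifts against every fibration. This last point is delicate: I would prove it by a path-object argument using only the lifting of cofibrations against \emph{trivial} fibrations (from $(\Cof_{\mathcal{S}}, \Tfib_{\mathcal{S}})$) together with fibrancy and SM7, thereby avoiding any circular appeal to lifting against general fibrations. \emph{This case is the main obstacle}, since acyclicity and the existence of the second factorization must be produced together out of fibrancy.

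It remains to record the simplicial structure, right properness, and the diagram version. The tensor, cotensor over finite simplicial sets, and mapping space making $\mathcal{S}^{\Delta^{\op}}$ simplicial exist by the coproduct and finite-limit hypotheses, and the enriched pushout--product axiom (SM7) follows from the transport set-up: by the composition law for $\boxplus$ and the fact that $\otimes$ is a morphism of weak factorization systems in the sense of \ref{PARWFSFUNCTORIALITY}, pushout--products of (trivial) cofibrations reduce to those of the generators, i.e.\@ to the classical pushout--product axiom in $\mathcal{SET}^{\Delta^{\op}}$. Right properness is exactly Lemma~\ref{LEMMARIGHTPROPER1}. For the diagram statement I would apply the theorem to $\mathcal{S}^{I}$ equipped with $(\mathcal{L}^p, \mathcal{R}^p)$, whose hypotheses transfer pointwise; the evaluation $i^{*}$ has left adjoint $i_!$ with $\Hom_{\mathcal{S}^I}(i_! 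P, -) = \Hom_{\mathcal{S}}(P, i^{*}(-))$, so the objects $i_! P$ with $P \in \mathcal{P}$ are $\N$-small (Case~1) and detect fibrancy, and every object of $\mathcal{S}^{I, \Delta^{\op}}$ is fibrant iff every object of $\mathcal{S}^{\Delta^{\op}}$ is (Case~2). The same adjunction identifies the detecting functors, whence $\Fib_{\mathcal{S}^I}$ and $\mathcal{W}_{\mathcal{S}^I}$ are exactly the pointwise classes, as claimed.
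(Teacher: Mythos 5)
Your global skeleton agrees with the paper's: reduce everything to Properties 1--3 of \ref{PARCOND}, get Properties 2 and 3 from Lemma~\ref{LEMMATFIB}, obtain the fourth inclusion by the retract argument, and get right properness from Lemma~\ref{LEMMARIGHTPROPER1}. Your Case~2 is also essentially the paper's argument (mapping path space $A \times_B \Hom(\Delta_1,B)$, strong deformation retracts, and a further $(\Cof_{\mathcal{S}},\Tfib_{\mathcal{S}})$-factorization to supply the missing factorization for $(\Tcof_{\mathcal{S}},\Fib_{\mathcal{S}})$).

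However, your Case~1 has a genuine gap. You prove $\Tcof_{\mathcal{S}} \subseteq \mathcal{W}_{\mathcal{S}}$ cell by cell, which requires $\Hom_r(X,-)$ for $X \in \mathcal{P}$ to turn $A \otimes P$ into $A \times \Hom(X,P)$ and to preserve the pushouts and transfinite compositions assembling a relative cell complex. As you yourself concede, this needs the objects of $\mathcal{P}$ to be \emph{connected} (so that $\Hom(X,-)$ distributes over the coproducts $\coprod_{A_m} P$, and commutes with pushouts along coproduct injections as in Lemma~\ref{LEMMACONNECTED}, which moreover is only proved there under extensivity). Neither connectedness nor extensivity is a hypothesis of Case~1: only $\N$-smallness of the objects of $\mathcal{P}$ and the existence of $\N$-filtered colimits are assumed, and $\N$-smallness controls the transfinite composition but says nothing about coproducts or pushouts. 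Covariant $\Hom(X,-)$ simply does not preserve pushouts in general, so $\Hom(X, X^{(\alpha)}) \rightarrow \Hom(X, X^{(\alpha+1)})$ need not be a weak equivalence and your conclusion does not follow. Your argument in fact proves the special (connected) case of Theorem~\ref{THEOREMMODELSOBJECTSII}, not Case~1 of the present theorem. The paper avoids this entirely by a different mechanism: the functor $\Ex^{\infty}$ exists under the stated hypotheses and satisfies $\Hom(P,\Ex^{\infty}X) = \Ex^{\infty}\Hom(P,X)$ for $\N$-small $P$ (only $\N$-filtered colimits and finite limits are needed), so $e_X \colon X \rightarrow \Ex^{\infty}X$ is a weak equivalence into a fibrant object; one then lifts $f \in \Tcof_{\mathcal{S}}$ against $\Ex^{\infty}(A) \rightarrow \cdot$ and against the path fibration of $\Ex^{\infty}(B)$, and concludes by a right-homotopy and two-out-of-six argument that $f \in \mathcal{W}_{\mathcal{S}}$. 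You should replace your cell-complex computation in Case~1 by this fibrant-replacement argument (or add connectedness as a hypothesis, which would weaken the theorem).
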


\begin{BEISPIEL}
\begin{enumerate}
\item Let  $\mathcal{S}$ be a category with finite limits, coproducts, and $\N$-filtered colimits. Consider the weak factorization system $(\mathcal{L}_{\proj,\eff}, \mathcal{R}_{\proj,\eff})$ and assume that for every 
object $X$ there is an effective epimorphism $\prod_{i}P_i \rightarrow  X$ where the $P_i$ are $\N$-small. Then Theorem~\ref{THEOREMMODELSOBJECTSI}, 1.\@ applies\footnote{Indeed, $\emptyset \rightarrow P$ for $P$ projective obviously generate $\mathcal{L}_{\proj,\eff}$. But for each such $P$ there is $\prod_{i}P_i \rightarrow P$ effective epimorphism with $P_i$ $\N$-small, which has a section. Therefore also $\emptyset \rightarrow P$ with $P$ $\N$-small generate $\mathcal{L}_{\proj,\eff}$. }. 
\item Let $I$ be a small category and set 
$\mathcal{S} = \mathcal{SET}^I$ with $(\mathcal{L}_{\proj}, \mathcal{R}_{\proj})$ 
and $\mathcal{P}$ is the set of representable objects. Those are connected and therefore Theorem~\ref{THEOREMMODELSOBJECTSI}, 1.\@ applies. This yields the projective model structure on simplicial presheaves. 
\end{enumerate}
\end{BEISPIEL}

\begin{proof}[Proof of Theorem~\ref{THEOREMMODELSOBJECTSI} (Quillen)] 
1.\@ Lemma~\ref{LEMMATFIB}, 3.\@ applies. 
If $\mathcal{S}$ has $\N$-filtered colimits then there is a functor
\[ \Ex^{\infty}: \mathcal{S}^{\Delta^{\op}} \rightarrow \mathcal{S}^{\Delta^{\op}} \]
satisfying $\Hom(P, \Ex^{\infty} X) = \Ex^{\infty} \Hom(P, X)$ for all $\N$-small $P$. It follows that the map $e_X: X \rightarrow \Ex^{\infty} X$ is in $\mathcal{W}_{\mathcal{S}}$. 
A morphism $f$ is a fibration if and only if $\Hom(P, f)$ is a fibration (Lemma~\ref{LEMMATFIB}, 3.). Hence $\Ex^{\infty} X \rightarrow \cdot$ is a fibration.
So if $f$ is in $\Tcof_{\mathcal{S}}$ then we may lift successively in 
\[ \xymatrix{
A \ar[r] \ar[d]_f & \Ex^\infty(A) \ar[d] \\
B \ar[r] \ar@{.>}[ru]^h & \cdot
}\qquad
\xymatrix{
A \ar[rr]^-{ce_Bf} \ar[d]_f  & & \Hom(\Delta_1, \Ex^\infty(B)) \ar[d] \\
B \ar[rr]_-{(e_B, \Ex^{\infty}(f)h)} \ar@{.>}[rru]  & & \Ex^\infty(B) \times \Ex^\infty(B)
}
\]
Therefore $e_B$ (which is a weak equivalence) is right homotopic to $\Ex^{\infty}(f)h$ and hence this is a weak equivalence. Since also $hf$ is a weak equivalence, $h$, and finally $f$, are weak equivalences. 

2. Lemma~\ref{LEMMATFIB}, 1.\@ and 2.\@ apply. Therefore 
$\Tfib_{\mathcal{S}} = \Fib_{\mathcal{S}} \cap  \mathcal{W}_{\mathcal{S}}$.
Note that we do not yet have factorizations for $(\Tcof_{\mathcal{S}}, \Fib_{\mathcal{S}})$ and will therefore show both inclusions for
$\Tcof_{\mathcal{S}} = \Cof_{\mathcal{S}} \cap \mathcal{W}_{\mathcal{S}}$.

If $f: A\rightarrow B$ is a morphism between fibrant objects then 
\[ \xymatrix{ A \ar[r]^-i & A \times_B \Hom(\Delta_1, B) \ar[r]^-p & B } \]
is a factorization into a strong deformation retract followed by a fibration. Note that the second morphism is a composition of a pullback of $\Hom(\Delta_1, B) \rightarrow B \times B$ with the projection $A\times B \rightarrow B$ and thus it is in $\Fib_{\mathcal{S}}$ by purely formal considerations (the factorization is not needed). 
Then $i$ is a weak equivalence because $\Hom(P, i)$ is a strong deformation retract again. 
If $f$ is in $\Tcof_{\mathcal{S}}$ then $f$ is a retract of $i$ and thus a weak equivalence. 
Conversely, if $f$ is in morphism $\Cof_{\mathcal{S}} \cap \mathcal{W}$ then $p$ is a trivial fibration. Hence $f$ is a retract of $i$ and thus 
 a strong deformation retract as well. Thus is has the left lifting property w.r.t.\@ $\Fib_{\mathcal{S}}$ and is thus in $\Tcof_{\mathcal{S}}$. 

To get the missing factorization factor $i = kj$ with $j$ cofibration and $k$ trivial fibration. Then $j$ is a weak equivalence and thus in $\Tcof_{\mathcal{S}}$.
Thus $f = (pk)j$ is the required factorization. 

In both cases the right properness follows from Lemma~\ref{LEMMARIGHTPROPER1}. The assertion on diagram categories follows because $(\mathcal{L}^p, \mathcal{R}^p)$ and the class
\[ \mathcal{P}^p := \{i_! X \ |\ i \in I, X \in \mathcal{P} \} \]
satisfy the same assumptions.
\end{proof}

\section{The split-projective model category}\label{SECTIONSPLITPROJECTIVE}

With slightly modified techniques we prove the following variant which is the novelty of this article:  

\begin{SATZ}\label{THEOREMMODELSOBJECTSII}
Let $\mathcal{S}$ be an extensive category with finite limits and consider the weak factorization system
$(\mathcal{L}_{\proj, \spl}, \mathcal{R}_{\proj, \spl})$. Let $\mathcal{P}$ be a class of $\N$-small objects and assume that every object of $\mathcal{S}$ is
a coproduct of these. 

Then $\mathcal{S}^{\Delta^{\op}}$ with the classes $\Cof_{\mathcal{S}}$,
$\Fib_{\mathcal{S}}$, and $\mathcal{W}_{\mathcal{S}}$, defined in \ref{PARTRANSPORTWFS} and \ref{PARCOND}, is a right proper simplicial model category.

Furthermore, if $(\mathcal{L}_{\proj, \spl}^p, \mathcal{R}_{\proj, \spl}^p)$ is the weak factorization system on $\mathcal{S}^{I}$ such that $\mathcal{R}_{\proj, \spl}^p$ is the point-wise extension of $\mathcal{R}_{\proj, \spl}$ (i.e.\@ obtained by Proposition~\ref{PROPWFSDIAGRAM}) then also $\mathcal{S}^{I, \Delta^{\op}}$, with the classes $\Cof_{\mathcal{S}^I}$,
$\Fib_{\mathcal{S}^I}$, and $\mathcal{W}_{\mathcal{S}^I}$ is a simplicial model category, and $\mathcal{W}_{\mathcal{S}^I}$ (resp.\@ $\Fib_{\mathcal{S}^I}$) consist of those morphisms that are
point-wise in $\mathcal{W}_{\mathcal{S}}$ (resp.\@ in $\Fib_{\mathcal{S}}$).
\end{SATZ}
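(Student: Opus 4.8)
The plan is to verify the three conditions of \ref{PARCOND} --- $\Tcof_{\mathcal{S}} \subset \mathcal{W}_{\mathcal{S}}$, $\Tfib_{\mathcal{S}} \subset \mathcal{W}_{\mathcal{S}}$ and $\Fib_{\mathcal{S}} \cap \mathcal{W}_{\mathcal{S}} \subset \Tfib_{\mathcal{S}}$ --- since then the model axioms follow formally from Lemma~\ref{LEMMATRIVIAL} and the retract argument, the $2$-out-of-$3$ property being built into the definition of $\mathcal{W}_{\mathcal{S}}$. The two underlying weak factorization systems are already at hand: $(\Cof_{\mathcal{S}}, \Tfib_{\mathcal{S}})$ by Proposition~\ref{PROPCOFTFIB}, and $(\Tcof_{\mathcal{S}}, \Fib_{\mathcal{S}})$ by Proposition~\ref{PROPTCOFFIB}.1, because $\mathcal{L}_{\proj,\spl}$ is generated by $\{\emptyset \rightarrow P \mid P \in \mathcal{P}\}$, whose members have $\N$-small domain and codomain. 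That this set generates is checked via Proposition~\ref{PROPEXTENSIVE}: a morphism $f$ lies in $\{\emptyset \rightarrow P\}^\Box$ iff $\Hom_{\mathcal{S}}(P,f)$ is surjective for all $P \in \mathcal{P}$, which --- every object being a coproduct of $\mathcal{P}$-objects --- forces $\Hom_{\mathcal{S}}(Z,f)$ surjective for all $Z$, i.e.\ $f$ to be a split epimorphism. Since the generators run between projectives ($\emptyset$ and $P \in \mathcal{P}$), Lemma~\ref{LEMMATFIB}.1 yields condition~$2$, Lemma~\ref{LEMMATFIB}.2--3 yield condition~$3$, and $f \in \Fib_{\mathcal{S}}$ iff $\Hom(P,f)$ is a Kan fibration for every $P \in \mathcal{P}$. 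Hence the only real issue is condition~$1$.

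Condition~$1$, $\Tcof_{\mathcal{S}} \subset \mathcal{W}_{\mathcal{S}}$, is the main obstacle and the place where the split-projective case genuinely differs from Theorem~\ref{THEOREMMODELSOBJECTSI}: there is no $\Ex^\infty$ available (no $\N$-filtered colimits) and not every object is fibrant. The new idea is that the generating trivial cofibrations are assembled purely out of coproduct injections. By Proposition~\ref{PROPTCOFFIB}.1 each map of $\Tcof_{\mathcal{S}}$ is a retract of a transfinite composition of pushouts of the generators $\Lambda_{k,n} \otimes P \rightarrow \Delta_n \otimes P$ (the corner term collapses since $\emptyset \otimes (-) = \emptyset$). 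For a \emph{connected} object $C$ of $\mathcal{S}$, i.e.\ one for which $\Hom_{\mathcal{S}}(C,-)$ preserves coproducts, I would show that $\Hom(C,-) \colon \mathcal{S}^{\Delta^{\op}} \rightarrow \mathcal{SET}^{\Delta^{\op}}$ carries every such map to an anodyne map of simplicial sets. Indeed $\Hom(C, \Lambda_{k,n} \otimes P \rightarrow \Delta_n \otimes P)$ is the map $\coprod_{\Hom_{\mathcal{S}}(C,P)} (\Lambda_{k,n} \rightarrow \Delta_n)$, which is anodyne; the relevant pushouts are degreewise of the shape $Z_m \rightarrow Z_m \amalg \coprod P$, and the transfinite compositions are degreewise total coproducts $Z_{0,m} \amalg \coprod P$, so the coproduct-preserving functor $\Hom(C,-)$ takes them to the corresponding pushouts and transfinite compositions of anodyne maps. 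Thus $\Hom(C,\phi)$ is anodyne, in particular a weak equivalence, for every $\phi \in \Tcof_{\mathcal{S}}$.

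It then remains to pass from connected objects to the (a priori non-connected) $\N$-small objects of $\mathcal{P}$. Here I would use that an $\N$-small object $Q$ of an extensive category splits as a \emph{finite} coproduct $Q \cong \coprod_{k=1}^{n} C_k$ of connected objects: compactness forces any coproduct decomposition to have only finitely many non-initial summands, so refining into indecomposables terminates and produces connected pieces. Then $\Hom(Q,\phi) = \prod_{k=1}^{n} \Hom(C_k,\phi)$ is a finite product of weak equivalences, hence a weak equivalence (finite products preserve weak equivalences of simplicial sets); so $\phi \in \mathcal{W}_{\mathcal{S}}$, establishing condition~$1$. Right properness is Lemma~\ref{LEMMARIGHTPROPER1}, and the simplicial (pushout-product) axiom follows, just as for Theorem~\ref{THEOREMMODELSOBJECTSI}, from $\otimes$ being a morphism of weak factorization systems (\ref{PARTRANSPORTWFS}) together with the identities $\Tcof_{\mathcal{S}} = \Cof_{\mathcal{S}} \cap \mathcal{W}_{\mathcal{S}}$ and $\Tfib_{\mathcal{S}} = \Fib_{\mathcal{S}} \cap \mathcal{W}_{\mathcal{S}}$ just obtained.

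For the diagram statement I would run the identical scheme on $\mathcal{S}^{I}$, which is again extensive with finite limits (limits and colimits being computed point-wise). The point-wise system $(\mathcal{L}_{\proj,\spl}^p, \mathcal{R}_{\proj,\spl}^p)$ of Proposition~\ref{PROPWFSDIAGRAM} is generated by $\{\emptyset \rightarrow i_! P \mid i \in I,\ P \in \mathcal{P}\}$; each $i_! P$ is $\N$-small, because $\Hom(i_! P,-) = \Hom_{\mathcal{S}}(P, i^*-)$ and $i^*$ preserves filtered colimits, and is connected (indeed a finite coproduct of connected objects) whenever $P$ is, because $i^*$ preserves coproducts and $i_!$ preserves the finite decomposition above. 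Crucially one needs here only that these morphisms \emph{generate}, checked point-wise exactly as in the first paragraph, and \emph{not} that every object of $\mathcal{S}^{I}$ be a coproduct of the $i_! P$. Taking $\mathcal{P}_I := \{i_! P\}$ gives $\mathcal{W}_{\mathcal{S}^I} = \{f \mid i^* f \in \mathcal{W}_{\mathcal{S}}\ \forall i\}$ and likewise $\Fib_{\mathcal{S}^I} = \{f \mid i^* f \in \Fib_{\mathcal{S}}\ \forall i\}$, the point-wise classes, after which the arguments above apply verbatim. I expect the two steps requiring most care to be the finite splitting of $\N$-small objects into connected ones, and the verification that $\Hom(C,-)$ really commutes degreewise with the towers produced by the small object argument.
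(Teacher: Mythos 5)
Your overall architecture is sound up to one step, and in fact your treatment of the case where the test objects are \emph{connected} is essentially identical to the paper's ``special case'': generation of $\mathcal{L}_{\proj,\spl}$ by $\{\emptyset\rightarrow P\}$, Proposition~\ref{PROPTCOFFIB}.1 for the weak factorization system $(\Tcof_{\mathcal{S}},\Fib_{\mathcal{S}})$, Lemma~\ref{LEMMATFIB} for conditions 2 and 3 of \ref{PARCOND}, and the computation that $\Hom(C,-)$ sends the generating trivial cofibrations, their pushouts and transfinite compositions to anodyne maps (this is exactly Lemma~\ref{LEMMACONNECTED} in the paper). The gap is in your reduction of the general case to the connected case. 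You claim that every $\N$-small object $Q$ of an extensive category splits as a \emph{finite} coproduct of connected objects, arguing that ``refining into indecomposables terminates''. Smallness does give that every single decomposition $Q\cong\coprod_i Q_i$ has only finitely many non-initial summands, but it does not bound the length of a strictly refining \emph{chain} of such decompositions, and the refinement process need not terminate. Indeed the claim is false: in the topos of sheaves on the Cantor set (a Grothendieck topos, hence infinitary extensive with finite limits), the terminal object is finitely presentable, yet it admits no decomposition into connected objects at all, since the Cantor set has no nonempty connected clopen subsets --- one can refine clopen partitions forever without ever reaching a connected piece. Nothing in the hypotheses of the theorem rules this phenomenon out, because the objects of $\mathcal{P}$ are only assumed $\N$-small, not connected, and the assumed decomposition of every object into $\mathcal{P}$-objects may be trivial.

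The paper gets around precisely this point by a different mechanism. It first proves the theorem in the free coproduct completion $\mathcal{S}^{\amalg}$, where the objects coming from $\mathcal{S}$ are connected \emph{by construction}, and then transports the result back along the adjunction $\amalg\dashv U$ using Lemma~\ref{LEMMAADJ} (which produces the factorizations for $(\Tcof_{\mathcal{S}},\Fib_{\mathcal{S}})$ and shows every trivial cofibration in $\mathcal{S}^{\Delta^{\op}}$ is a retract of one in $\amalg(\Tcof_{\mathcal{S}^{\amalg}})$) and Lemma~\ref{LEMMASCOPROD}. The latter replaces your finest decomposition by the extensivity formula of Proposition~\ref{PROPEXTENSIVE},
\[ \Hom(P,\amalg f)\cong\colim_{P\cong\amalg P'}\Hom(P',f), \]
a \emph{filtered} colimit over \emph{all} decompositions of $P$, each term of which is a finite product of weak equivalences by $\N$-smallness; no single decomposition into connected pieces is ever needed. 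If you want to salvage your direct approach you would have to import this filtered-colimit argument (or equivalently the detour through $\mathcal{S}^{\amalg}$); as written, the passage from connected to $\N$-small objects does not go through. The same issue propagates to your treatment of the diagram case, where you again invoke the (unavailable) finite connected decomposition of $i_!P$; the paper instead handles diagrams via Theorem~\ref{THEOREMMODELDIAGRAM}, reducing everything to the existence of the factorization for $(\Tcof_{\mathcal{S}^I},\Fib_{\mathcal{S}^I})$, which follows from Proposition~\ref{PROPTCOFFIB}.
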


Recall that $(\mathcal{L}_{\proj, \spl}, \mathcal{R}_{\proj, \spl})$ is adhesive (cf.\@ Definition~\ref{DEFADHESIVE}) in an extensive category and thus we get a very precise description of the cofibrant objects (cf.\@ Proposition~\ref{PROPADHESIVE}).  

Recall that a connected object $X$ in a category is an object such that $\Hom(X, -)$ commutes with coproducts. The following is therefore a direct consequence of the definitions: 
\begin{LEMMA}\label{LEMMACONNECTED}
Let $\mathcal{S}$ be an extensive category with the weak factorization system $(\mathcal{L}_{\proj, \spl}, \mathcal{R}_{\proj, \spl})$.
If $X$ is connected then the functor $\Hom(X, -)$ commutes with push-outs along morphisms in $\mathcal{L}_{\proj, \spl}$ and with transfinite compositions of morphisms in  $\mathcal{L}_{\proj, \spl}$.
\end{LEMMA}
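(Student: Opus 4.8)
The plan is to reduce both assertions to the single fact that $\Hom(X,-)$ commutes with coproducts, which is exactly the definition of connectedness of $X$. The key preliminary observation is the earlier characterization: in an extensive category every morphism in $\mathcal{L}_{\proj,\spl}$ is isomorphic to a coproduct injection $\iota_A\colon A \to A \amalg B$. So it suffices to treat pushouts and transfinite compositions of such coproduct injections.

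For pushouts, given $f\colon A \to C$ I first note that the pushout of $\iota_A\colon A \to A\amalg B$ along $f$ is the coproduct injection $C \to C \amalg B$. This is a purely formal consequence of the universal property of coproducts (a map out of $C \amalg_A (A\amalg B)$ is the same datum as a map out of $C$ together with a map out of $B$) and needs no extensivity. Applying $\Hom(X,-)$ and using connectedness turns the square
\[ \xymatrix{ A \ar[r]^-{\iota_A} \ar[d]_f & A \amalg B \ar[d] \\ C \ar[r] & C \amalg B } \]
into
\[ \xymatrix{ \Hom(X,A) \ar[r] \ar[d] & \Hom(X,A) \amalg \Hom(X,B) \ar[d] \\ \Hom(X,C) \ar[r] & \Hom(X,C) \amalg \Hom(X,B) } \]
whose horizontal maps are the evident coproduct injections; this is manifestly a pushout in $\mathcal{SET}$. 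Hence $\Hom(X,-)$ preserves the pushout.

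For transfinite compositions, let $(A_\alpha)_{\alpha<\lambda}$ be a $\lambda$-chain whose successor maps are coproduct injections $A_\alpha \to A_\alpha \amalg B_\alpha$ and whose limit stages are colimits. An induction on $\alpha$ (identifying each limit stage again as a coproduct) shows $A_\alpha \cong A_0 \amalg \coprod_{\beta<\alpha} B_\beta$ with the transition maps the evident coproduct injections, so that $\colim_\alpha A_\alpha \cong A_0 \amalg \coprod_{\beta<\lambda} B_\beta$ and the composite $A_0 \to \colim_\alpha A_\alpha$ is itself a coproduct injection. Applying $\Hom(X,-)$ and using connectedness gives $\Hom(X,\colim_\alpha A_\alpha) = \Hom(X,A_0) \amalg \coprod_{\beta<\lambda} \Hom(X,B_\beta)$, while $\colim_\alpha \Hom(X,A_\alpha)$ is the analogous transfinite composition of coproduct injections in $\mathcal{SET}$, computing to the same set; comparing the canonical maps shows the two coincide.

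The only point needing a little care — and the one I expect to be the main (minor) obstacle — is the inductive identification of the transfinite colimit with $A_0 \amalg \coprod_\beta B_\beta$ compatibly with the structure maps, i.e.\ checking that the limit-stage colimits are again coproducts; this is precisely where one uses that coproducts in an extensive category are well-behaved, so that colimits of chains of coproduct injections compute as coproducts. Everything else is a direct appeal to universal properties and to the commutation of $\Hom(X,-)$ with coproducts.
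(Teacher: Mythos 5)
Your proof is correct and is exactly the argument the paper has in mind: the paper gives no written proof, asserting that the Lemma ``follows directly from the definitions'', namely from the earlier characterization of $\mathcal{L}_{\proj,\spl}$ in an extensive category as the morphisms isomorphic to coproduct injections $A \to A \amalg B$, together with the definition of connectedness as commutation of $\Hom(X,-)$ with coproducts. The one step you flag as delicate --- identifying the limit stages of the transfinite composition as coproducts --- in fact needs only the existence of coproducts and their universal property (a compatible cocone under the chain is the same datum as a map out of $A_0$ together with maps out of each $B_\beta$), not extensivity proper, so there is no gap.
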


\begin{proof}[Proof of \ref{THEOREMMODELSOBJECTSII}, special case.]
We first consider the special case in which all objects of $\mathcal{P}$ are connected. This applies, for instance, to $\mathcal{S}$ being the free coproduct completion of a category  and $\mathcal{P}$ the class of all connected objects. Note that $\mathcal{L}$ is generated by the morphisms $\emptyset \rightarrow P$ and thus
Proposition~\ref{PROPTCOFFIB}, 1. applies. We thus get a weak factorization system $(\Tcof_{\mathcal{S}}, \Fib_{\mathcal{S}})$. 
Lemma~\ref{LEMMATFIB}, 3.\@ applies. Hence we are reduced to show $\Tcof_{\mathcal{S}} \subset \mathcal{W}_{\mathcal{S}}$. Furthermore,
 $\Fib$ consists precisely of the morphisms $f$ such that $\Hom(P, f)$ is a fibration for all $P \in \mathcal{P}$. 
By the construction in the proof of Proposition~\ref{PROPCOFTFIB} (keeping in mind $\kappa=\aleph_0$ here) every element in $\Tcof$ is a (retract of an) $\N$-transfinite composition of pushouts
of $f: \Lambda_{n,k} \otimes X \rightarrow \Delta_{n} \otimes X$. The latter are weak equivalences because $\Hom(P, f) = (\Lambda_{n,k} \rightarrow \Delta_n) \otimes \Hom(P,X)$ ($P$ is connected)
which is a (possibly infinite) union of trivial cofibrations of simplicial sets and so a trivial cofibration and hence a weak equivalence. 
Any push-out of $f$ and transfinite compositions of such push-outs is therefore also a weak equivalence by Lemma~\ref{LEMMACONNECTED}.
\end{proof}

The general case will be be reduced to this special case using the coproduct completion. For this we need a couple of Lemmas.

\begin{LEMMA}\label{LEMMAADJ}
Let 
\[ \xymatrix{    \mathcal{S}_0 \ar@<2.5pt>[r]^U & \ar@<2.5pt>[l]^C  \mathcal{S}_1  } \] 
be an adjunction of categories with weak factorization systems $(\mathcal{L}_0, \mathcal{R}_0)$ and $(\mathcal{L}_1, \mathcal{R}_1)$  
in the sense of \ref{PARWFSFUNCTORIALITY}. Assume in addition that 
\begin{enumerate}
\item the right adjoint $U$ is fully faithful,
\item $C$ commutes with finite limits,
\item $C(\mathcal{R}_1) \subset \mathcal{R}_0$.
\end{enumerate}
If $(\Tcof_{\mathcal{S}_1}, \Fib_{\mathcal{S}_1})$ is a weak factorization system using the construction in Proposition~\ref{PROPTCOFFIB}, also $(\Tcof_{\mathcal{S}_0}, \Fib_{\mathcal{S}_0})$ is a weak factorization system.
Furthermore $\Tcof_{\mathcal{S}_0} = rc(C(\Tcof_{\mathcal{S}_1}))$ where $rc$ means ``closure under retracts''.
\end{LEMMA}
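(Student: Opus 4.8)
The plan is to lift the adjunction $C \dashv U$ degreewise to an adjunction on simplicial objects, which I again denote $C \dashv U$ between $\mathcal{S}_1^{\Delta^{\op}}$ and $\mathcal{S}_0^{\Delta^{\op}}$, and to transport factorizations from $\mathcal{S}_1^{\Delta^{\op}}$ to $\mathcal{S}_0^{\Delta^{\op}}$ through it. Three compatibilities feed everything. First, since $U$ is fully faithful the counit $CU \to \id$ is an isomorphism, and this persists degreewise, so $CU \cong \id$ on $\mathcal{S}_0^{\Delta^{\op}}$. Second, $C$ is a left adjoint, hence commutes with the tensoring $\otimes$ over $\mathcal{SET}$ and with the pushouts and transfinite compositions defining $\boxplus$; in particular $C(a \boxplus (\Lambda_{n,k}\to\Delta_n)) \cong C(a) \boxplus (\Lambda_{n,k}\to\Delta_n)$ with $C(a) \in \mathcal{L}_0$, by condition (a) of a morphism of w.f.s. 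Third, and crucially, for a \emph{finite} simplicial set $K$ the object $\Hom_l(K,-)$ is a finite limit (an end, as in \ref{PARSIMPLICIALSTRUCT}), so the hypothesis that $C$ commutes with finite limits yields $\Hom_l(K, C Z) \cong C(\Hom_l(K, Z))$; and $U$, being a right adjoint, satisfies $\Hom_l(K, U Z) \cong U(\Hom_l(K, Z))$ unconditionally.

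Next I show that $C$ and $U$ respect the fibration classes. By the lifting translation of \ref{PARADJUNCTION}, a morphism $g$ lies in $\Fib_{\mathcal{S}_i}$ iff $a \Box \boxdot\Hom_l((\Lambda_{n,k}\to\Delta_n)^{\op}, g)$ for all $a \in \mathcal{L}_i$ and all $n,k$, which amounts to $\boxdot\Hom_l((\Lambda_{n,k}\to\Delta_n)^{\op}, g) \in \mathcal{L}_i^{\Box} = \mathcal{R}_i$. For $q \in \Fib_{\mathcal{S}_1}$ the third compatibility (together with $C$ preserving the pullback in $\boxdot$) identifies $\boxdot\Hom_l((\Lambda_{n,k}\to\Delta_n)^{\op}, Cq)$ with $C(\boxdot\Hom_l((\Lambda_{n,k}\to\Delta_n)^{\op}, q))$, which lies in $C(\mathcal{R}_1) \subseteq \mathcal{R}_0$ by condition 3; hence $C(\Fib_{\mathcal{S}_1}) \subseteq \Fib_{\mathcal{S}_0}$. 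The identical computation with $U$ in place of $C$, now using that $U$ preserves finite limits and that $U(\mathcal{R}_0) \subseteq \mathcal{R}_1$ (the equivalent form of condition (a)), gives $U(\Fib_{\mathcal{S}_0}) \subseteq \Fib_{\mathcal{S}_1}$. From the latter and the adjunction identity $C\lambda \Box \rho \Leftrightarrow \lambda \Box U\rho$ I deduce $C(\Tcof_{\mathcal{S}_1}) \subseteq \Tcof_{\mathcal{S}_0}$, since $\Tcof_{\mathcal{S}_1} = {}^\Box\Fib_{\mathcal{S}_1}$ and $\Tcof_{\mathcal{S}_0} = {}^\Box\Fib_{\mathcal{S}_0}$: for $\rho \in \Fib_{\mathcal{S}_0}$ one has $U\rho \in \Fib_{\mathcal{S}_1}$, so $j \Box U\rho$ forces $Cj \Box \rho$.

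The factorizations are then produced by transport. Given $f$ in $\mathcal{S}_0^{\Delta^{\op}}$, factor $Uf = q \circ j$ in $\mathcal{S}_1^{\Delta^{\op}}$ with $j \in \Tcof_{\mathcal{S}_1}$ and $q \in \Fib_{\mathcal{S}_1}$ (possible by the standing hypothesis that $(\Tcof_{\mathcal{S}_1}, \Fib_{\mathcal{S}_1})$ is a w.f.s.), and apply $C$. Using $CU \cong \id$ and naturality of the counit, the composite $Cq \circ Cj$ is identified with $f$, while by the previous paragraph $Cj \in \Tcof_{\mathcal{S}_0}$ and $Cq \in \Fib_{\mathcal{S}_0}$. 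Since $\Tcof_{\mathcal{S}_0}$ and $\Fib_{\mathcal{S}_0}$ are defined by a lifting property, the lifting axiom and closure under retracts are automatic; the existence of these factorizations is exactly the one missing axiom, so $(\Tcof_{\mathcal{S}_0}, \Fib_{\mathcal{S}_0})$ is a weak factorization system.

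Finally, for the description $\Tcof_{\mathcal{S}_0} = rc(C(\Tcof_{\mathcal{S}_1}))$, the inclusion $\supseteq$ is immediate from $C(\Tcof_{\mathcal{S}_1}) \subseteq \Tcof_{\mathcal{S}_0}$ and retract-closure. For $\subseteq$ I use the retract argument: given $f \in \Tcof_{\mathcal{S}_0}$, the factorization just built writes $f = (Cq) \circ (Cj)$ with $Cj \in C(\Tcof_{\mathcal{S}_1})$ and $Cq \in \Fib_{\mathcal{S}_0}$; as $f \Box Cq$, the morphism $f$ is a retract of $Cj$, so $f \in rc(C(\Tcof_{\mathcal{S}_1}))$. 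The main obstacle is the second paragraph's claim that the left adjoint $C$ preserves fibrations: a left adjoint has no general reason to preserve a right class, and it is precisely the two exceptional hypotheses --- that $C$ commutes with finite limits (hence with the finite cotensor $\Hom_l$ by finite simplicial sets) and that $C(\mathcal{R}_1) \subseteq \mathcal{R}_0$ --- that make this step go through, the remaining arguments being formal consequences of the adjunction and the lifting calculus of \ref{PARADJUNCTION}.
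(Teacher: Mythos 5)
Your proposal is correct and follows essentially the same route as the paper: factor $Uf$ in $\mathcal{S}_1^{\Delta^{\op}}$, push the factorization back through $C$ using $CU\cong\id$, justify that $C$ preserves fibrations via the $\boxdot\Hom_l(\Lambda_{n,k}\hookrightarrow\Delta_n,-)$ criterion together with preservation of finite limits and $C(\mathcal{R}_1)\subseteq\mathcal{R}_0$, and obtain the retract description by the retract argument. The only difference is that you spell out why $C$ preserves trivial cofibrations (through $U(\Fib_{\mathcal{S}_0})\subseteq\Fib_{\mathcal{S}_1}$ and the adjunction on lifting problems), a step the paper's proof asserts without comment.
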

\begin{proof}
Denote the induced functors on simplicial objects by the same letters $U$ and $C$. 
By Proposition~\ref{PROPFUNCTOR}, $C$ preserves trivial cofibrations and fibrations. 
Therefore, given a morphism $f$ in $\mathcal{S}_0^{\Delta^{\op}}$, factor $U(f)$ as trivial cofibration followed by fibration. Since $U$ is fully-faithful, applying $C$ to this factorization we get a a factorization of $f$ into trivial cofibration followed by a fibration.  
If $f \in \Tcof_{\mathcal{S}_0}$ the retract argument shows thus that $f$ is a retract of a morphism in $C(\Tcof_{\mathcal{S}_1})$.
\end{proof}

\begin{LEMMA}\label{LEMMASCOPROD}
Let $\mathcal{S}$ be an extensive category and form classes $\mathcal{W}_{\mathcal{S}}$ and $\mathcal{W}_{\mathcal{S}^{\amalg}}$ as in \ref{PARCOND} using 
a class $\mathcal{P}$ of $\N$-small objects, and the class of all connected objects (i.e.\@ the image of $\mathcal{S} \hookrightarrow \mathcal{S}^{\amalg}$), respectively. 
Then
\[ \amalg(\mathcal{W}_{\mathcal{S}^{\amalg}}) \subset \mathcal{W}_{\mathcal{S}}.  \]
\end{LEMMA}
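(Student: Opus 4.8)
The plan is to test $\amalg(g)$ against the generators $P \in \mathcal{P}$ of $\mathcal{W}_{\mathcal{S}}$ and to rewrite $\Hom_{\mathcal{S}}(P, \amalg(g))$ in terms of the connected generators of $\mathcal{W}_{\mathcal{S}^\amalg}$ by means of the colimit formula in Proposition~\ref{PROPEXTENSIVE}, 4. (which applies since an extensive $\mathcal{S}$ has all coproducts, so that $\amalg$ is defined and left adjoint to $U$). Fix $g\colon X \to Y$ in $\mathcal{W}_{\mathcal{S}^\amalg}$ and $P \in \mathcal{P}$. Applying $\amalg$ degree-wise we have $\amalg(X)_n = \amalg(X_n)$, and Proposition~\ref{PROPEXTENSIVE}, 4.\ gives, naturally in $n$ and compatibly with the simplicial structure maps, an isomorphism of simplicial sets
\[ \Hom_{\mathcal{S}}(P, \amalg(X)) \cong \colim_{P \cong \amalg P'} \Hom_{\mathcal{S}^\amalg}(P', X), \]
and likewise for $Y$, the (filtered) colimit running over the coproduct decompositions $P \cong \amalg P'$ of $P$ in $\mathcal{S}$. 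Since the indexing category does not depend on $g$, this identifies $\Hom_{\mathcal{S}}(P, \amalg(g))$ with the colimit of the maps $\Hom_{\mathcal{S}^\amalg}(P', g)$.

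First I would reduce each term of the colimit to a product over connected pieces. Writing $P' = (I, (P_i)_{i \in I})$, so that $P' \cong \coprod_{i \in I} U(P_i)$ in $\mathcal{S}^\amalg$, one gets
\[ \Hom_{\mathcal{S}^\amalg}(P', g) \cong \prod_{i \in I} \Hom_{\mathcal{S}^\amalg}(U(P_i), g). \]
Because $P$ is $\N$-small, every coproduct decomposition of $P$ into non-empty summands is finite: the identity of $P$ factors through a finite sub-coproduct of $\coprod_{i} P_i = \colim_{I_0 \subseteq I \text{ finite}} \coprod_{i \in I_0} P_i$, and since each inclusion of a sub-coproduct is a split coproduct injection this forces $P_i = \emptyset$ for all but finitely many $i$. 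Discarding the empty summands (whose factors $\Hom_{\mathcal{S}^\amalg}(U(\emptyset), -)$ are terminal), the product above is finite, and each surviving $U(P_i)$ is a connected object, so $\Hom_{\mathcal{S}^\amalg}(U(P_i), g)$ is a weak equivalence by the very definition of $\mathcal{W}_{\mathcal{S}^\amalg}$.

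It then remains to assemble these facts using two standard properties of simplicial sets: finite products of weak equivalences are weak equivalences (the Cartesian product is left Quillen in each variable and every object is cofibrant), whence each $\Hom_{\mathcal{S}^\amalg}(P', g)$ is a weak equivalence; and filtered colimits preserve weak equivalences (homotopy groups commute with filtered colimits), whence the colimit $\Hom_{\mathcal{S}}(P, \amalg(g))$ is a weak equivalence as well. As this holds for every $P \in \mathcal{P}$, we conclude $\amalg(g) \in \mathcal{W}_{\mathcal{S}}$.

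The hard part, and the only place where the $\N$-smallness of the objects of $\mathcal{P}$ is genuinely needed, is the passage from products to \emph{finite} products: an arbitrary product of weak equivalences of (possibly non-fibrant) simplicial sets need not be a weak equivalence, and the objects $\Hom_{\mathcal{S}^\amalg}(U(P_i), X)$ are not known to be Kan. The finiteness of the decompositions of each $P \in \mathcal{P}$, forced by $\N$-smallness, is exactly what rescues the argument; the filtered-colimit and finite-product stability statements are then routine.
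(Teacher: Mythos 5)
Your overall strategy is exactly the paper's: test $\amalg g$ against $P \in \mathcal{P}$, rewrite $\Hom_{\mathcal{S}}(P, \amalg g)$ as the filtered colimit $\colim_{P \cong \amalg P'} \Hom_{\mathcal{S}^\amalg}(P', g)$ via Proposition~\ref{PROPEXTENSIVE}, 4., use $\N$-smallness to reduce to finitely many components, and conclude from the stability of weak equivalences of simplicial sets under finite products and filtered colimits. Your finiteness argument (the identity of $P$ factoring through a finite sub-coproduct, extensivity forcing the complementary summands to be initial) and the two closure properties are precisely what the paper's proof uses, only spelled out in more detail.

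There is, however, one step that fails as written: the claim that the factors of $\Hom_{\mathcal{S}^\amalg}(P', g)$ indexed by the empty summands are terminal. If $P_i = \emptyset_{\mathcal{S}}$, then $U(P_i) = (\{\ast\}, (\emptyset))$ is \emph{not} the initial object of $\mathcal{S}^\amalg$ (that would be the empty family), and one computes $\Hom_{\mathcal{S}^\amalg}(U(\emptyset), (J, (Z_j))) \cong J$, the indexing set, rather than a point. These factors are still weak equivalences (as $U(\emptyset)$ is connected), but a decomposition of $P$ may have \emph{infinitely} many initial summands, so your product then contains an infinite power of a weak equivalence between generally non-Kan simplicial sets, which is not covered by finite-product stability --- exactly the pitfall you yourself flag at the end. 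The repair is the one the paper makes in its parenthetical ``in the colimit we may neglect components which are initial objects'': the decompositions with no initial summands form a cofinal subcategory of the filtered index category (collapse the empty summands), so the colimit may be computed over those, and there every term genuinely is a finite product of weak equivalences. With that cofinality observation substituted for the terminality claim, your proof coincides with the paper's.
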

\begin{proof}
 Let $f: X \rightarrow Y$ be a weak equivalence in $\mathcal{S}^{\amalg}$. We have to show that 
 \[ \Hom(P, \amalg f) \]
is a weak equivalence for all $\N$-small $P$. Because we are in an extensive category we have (Proposition~\ref{PROPEXTENSIVE})
 \[ \Hom(P, \amalg f) \cong \colim_{\substack{P \cong \amalg P'}} \Hom(P', f).   \]
 Since $P$ is $\N$-small the $P'$ actually all have finitely many components (in the colimit we may neglect components which are initial objects). Hence $\Hom(P', f)$ is a weak equivalence being a product of finitely many weak equivalences and, since the colimit is filtered, also $\Hom(P, \amalg f)$ is a weak equivalence. 
\end{proof}

\begin{proof}[Proof of \ref{THEOREMMODELSOBJECTSII}, general case.]
By assumption, the adjunction
\[ \xymatrix{    \mathcal{S} \ar@<2.5pt>[r]^U & \ar@<2.5pt>[l]^{\amalg}  \mathcal{S}^{\amalg}  } \]
satisfies the requirement of Lemma~\ref{LEMMAADJ}, hence we get a weak factorization system $(\Tcof_{\mathcal{S}}, \Fib_{\mathcal{S}})$. 

Lemma~\ref{LEMMATFIB}, 2.\@ applies, hence
it remains to show that $\Tcof_{\mathcal{S}} \subset \mathcal{W}_{\mathcal{S}}$. Take $f \in \Tcof_{\mathcal{S}}$. By Lemma~\ref{LEMMAADJ} it
is a retract of a morphism $f'$ which is in the image under $\amalg$ of $\Tcof_{\mathcal{S}^\amalg}$. Since by  Lemma~\ref{LEMMASCOPROD}  $\amalg$ preserves weak equivalences and those
are closed under retracts $f$ is a weak equivalence as well.

For the extension to diagrams, Theorem~\ref{THEOREMMODELDIAGRAM} reduces to show that the factorization for $(\Tcof_{\mathcal{S}^I}, \Fib_{\mathcal{S}^I})$ exists. This exists by Proposition~\ref{PROPTCOFFIB}, because  $\mathcal{L}^p$ is generated by the morphisms $\emptyset \rightarrow i_! P$ for $i \in I$ and $P \in \mathcal{P}$ and thus by morphisms with $\N$-small domain and codomain. 

The right properness follows from Lemma~\ref{LEMMARIGHTPROPER1}.
\end{proof}

The question arises, of course, how the split-projective structures on $\mathcal{S}^{\Delta^{\op}}$ and $\mathcal{S}^{\amalg, \Delta^{\op}}$ are related, provided they exist:

\begin{SATZ}\label{SATZEXTENSIVEMODELCAT}
Let $\mathcal{S}$ be an extensive category with finite limits, and such that every object is a coproduct of $\N$-small objects. 
Consider the split-projective model categories $\mathcal{S}^{\Delta^{\op}}$ and $\mathcal{S}^{\amalg, \Delta^{\op}}$. 

Then the left Bousfield localization $\mathcal{S}^{\amalg, \Delta^{\op}}_{loc}$  of $\mathcal{S}^{\amalg, \Delta^{\op}}$ at those morphisms that become weak equivalences in $\mathcal{S}^{\Delta^{\op}}$ exists and we have
Quillen adjunctions
\[ \xymatrix{ \mathcal{S}^{\Delta^{\op}} \ar@<2.5pt>[rr]^-U & & \ar@<2.5pt>[ll]^-{\amalg} \mathcal{S}^{\amalg, \Delta^{\op}}_{loc} 
\ar@<2.5pt>[rr]^-{\id} & & \ar@<2.5pt>[ll]^-{\id} \mathcal{S}^{\amalg, \Delta^{\op}} }. \]
The left hand side adjunction is a Quillen equivalence. 
\end{SATZ}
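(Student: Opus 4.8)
The plan is to read the right-hand identity adjunction as the defining data of a left Bousfield localization and to show that $\amalg$ then becomes a Quillen equivalence onto it. Throughout I write $U = R$ for the full embedding $\mathcal{S} \hookrightarrow \mathcal{S}^{\amalg}$ (and its levelwise extension to simplicial objects), so that $\amalg \dashv U$ by Proposition~\ref{PROPEXTENSIVE}, $U$ is fully faithful, and the counit $\amalg U \to \id$ is an isomorphism. I first record that $\amalg$ is left Quillen for the \emph{unlocalized} structures: being a left adjoint it preserves coproducts, pushouts and transfinite compositions, and it sends the generating (trivial) cofibrations $(\emptyset \to U(S)) \boxplus (\partial\Delta_n \to \Delta_n)$ (resp.\ with $\Lambda_{n,k}$) to $(\emptyset \to S)\boxplus(\cdots)$, which are (trivial) cofibrations of $\mathcal{S}^{\Delta^{\op}}$ since $\emptyset \to S$ is a coproduct injection; alternatively $\amalg$ preserves all weak equivalences by Lemma~\ref{LEMMASCOPROD}. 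The localized model category $\mathcal{S}^{\amalg,\Delta^{\op}}_{loc}$ is to have the same cofibrations $\Cof_{\mathcal{S}^\amalg}$ and weak equivalences $W_{loc} := \amalg^{-1}(\mathcal{W}_{\mathcal{S}})$. Granting its existence, the right-hand identity adjunction is the tautological Quillen adjunction of a left Bousfield localization (the left Quillen functor being $\id\colon\mathcal{S}^{\amalg,\Delta^{\op}} \to \mathcal{S}^{\amalg,\Delta^{\op}}_{loc}$, which preserves cofibrations and, since $\mathcal{W}_{\mathcal{S}^\amalg} \subseteq W_{loc}$ by Lemma~\ref{LEMMASCOPROD}, trivial cofibrations), and $\amalg$ is left Quillen for the localized structure because $\amalg(W_{loc}) \subseteq \mathcal{W}_{\mathcal{S}}$ holds by the very definition of $W_{loc}$.

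For the existence of the localization I would use the Bousfield--Friedlander machinery rather than a combinatorial localization theorem, since $\mathcal{S}$ need not be small; what is required is only the functorial factorizations of the base model structure, which Theorem~\ref{THEOREMMODELSOBJECTSII} provides. Put $Q := U\circ(-)^{\mathrm{fib}}\circ\amalg$, where $(-)^{\mathrm{fib}}$ is a functorial fibrant replacement in $\mathcal{S}^{\Delta^{\op}}$, with $\eta\colon\id \to Q$ the unit followed by $U$ applied to the fibrant replacement. The first task is to identify the $Q$-equivalences with $W_{loc}$. Since $\amalg Q \cong (-)^{\mathrm{fib}}\amalg$, a $Q$-equivalence $f$ has $\amalg Q f \cong (\amalg f)^{\mathrm{fib}}$ a weak equivalence by Lemma~\ref{LEMMASCOPROD}, whence $\amalg f \in \mathcal{W}_{\mathcal{S}}$; conversely, if $\amalg f \in \mathcal{W}_{\mathcal{S}}$ then writing any $S = \coprod_i P_i$ with $P_i \in \mathcal{P}$ (the standing hypothesis on $\mathcal{S}$) gives $\Hom(S,(\amalg f)^{\mathrm{fib}}) = \prod_i \Hom(P_i,(\amalg f)^{\mathrm{fib}})$, a product of weak equivalences of Kan complexes (each factor is a weak equivalence by definition of $\mathcal{W}_{\mathcal{S}}$ and $2$-out-of-$3$), hence $Qf = U((\amalg f)^{\mathrm{fib}})$ is a weak equivalence in $\mathcal{S}^{\amalg,\Delta^{\op}}$. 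Axioms (A1) and (A2) of Bousfield--Friedlander then follow formally from Lemma~\ref{LEMMASCOPROD} and the isomorphism $\amalg U \cong \id$: both $\eta_{QX}$ and $Q\eta_X$ become fibrant replacements of fibrant objects after applying $\amalg$, and $U$ of such maps is a weak equivalence by the same product argument.

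The hard part will be axiom (A3): that the pullback of a $Q$-equivalence along a $Q$-fibration is again a $Q$-equivalence. Here I would combine the right properness of $\mathcal{S}^{\amalg,\Delta^{\op}}$ (Lemma~\ref{LEMMARIGHTPROPER1}) with the explicit shape of $Q$. A $Q$-fibration sits in a homotopy-pullback square relating it to its image under $Q$; applying the exact functor $\amalg$, which preserves finite limits (Proposition~\ref{PROPEXTENSIVE}) and weak equivalences (Lemma~\ref{LEMMASCOPROD}), should reduce the assertion to right properness in the base $\mathcal{S}^{\Delta^{\op}}$, once more via Lemma~\ref{LEMMARIGHTPROPER1}. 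With (A1)--(A3) established, Bousfield--Friedlander produces the model structure $\mathcal{S}^{\amalg,\Delta^{\op}}_{loc}$ with cofibrations $\Cof_{\mathcal{S}^\amalg}$, weak equivalences $W_{loc}$, and fibrant objects the $Q$-local ones, which is exactly the asserted localization (at the coproduct covers).

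Finally, for the Quillen equivalence of the left-hand adjunction I would check that the derived unit and counit of $\amalg \dashv U$ are weak equivalences. For the counit, given fibrant $Y \in \mathcal{S}^{\Delta^{\op}}$ take a cofibrant replacement $c(UY)\to UY$ computed in the unlocalized structure (a weak equivalence there, hence in $W_{loc}$, with $c(UY)$ cofibrant in both structures); applying $\amalg$ and using $\amalg U \cong \id$ yields $\amalg c(UY) \to Y$, which lies in $\mathcal{W}_{\mathcal{S}}$ by Lemma~\ref{LEMMASCOPROD}. For the unit, given cofibrant $A$ the derived unit $A \to U((\amalg A)^{\mathrm{fib}})$ maps under $\amalg$, via the triangle identity and $\amalg U \cong \id$, to the fibrant-replacement map $\amalg A \to (\amalg A)^{\mathrm{fib}}$, a weak equivalence in $\mathcal{S}^{\Delta^{\op}}$; hence the derived unit lies in $W_{loc}$ by definition of $W_{loc}$. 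Since both derived unit and counit are weak equivalences, $\mathbb{L}\amalg \dashv \mathbb{R}U$ is an adjoint equivalence of homotopy categories, i.e.\ the left-hand adjunction is a Quillen equivalence.
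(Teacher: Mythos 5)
Your overall strategy is sound but genuinely different from the paper's. The paper does not invoke any localization theorem: it defines $\mathcal{W}' := \amalg^{-1}(\mathcal{W}_{\mathcal{S}})$ and constructs the factorization into $\Cof\cap\mathcal{W}'$ followed by $\Fib'$ by hand --- factor $\amalg f$ in $\mathcal{S}^{\Delta^{\op}}$ as $\amalg X \to Z \to \amalg Y$, pull $RZ \to R\amalg Y$ back along the unit $Y \to R\amalg Y$ to obtain $Z'\to Y$, and factor the remaining map $X \to Z'$ in the unlocalized structure upstairs; the retract argument then finishes the axioms, and the Quillen equivalence is obtained exactly as in your last paragraph (both functors preserve weak equivalences, the unit lies in $\mathcal{W}'$, the counit is an isomorphism since $R$ is fully faithful). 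Your Bousfield--Friedlander route buys a more recognizable packaging and an identification of the fibrant objects as the $Q$-local ones, at the cost of verifying the BF axioms; note that you must also check that the BF proof uses only the colimits actually available in $\mathcal{S}^{\amalg,\Delta^{\op}}$ (which is not cocomplete --- only coproducts, pushouts along $\mathcal{L}$-maps and their transfinite compositions exist), but the factorization step of BF needs only a pullback and the two existing factorizations, so this is harmless. Your identification of the $Q$-equivalences with $\amalg^{-1}(\mathcal{W}_{\mathcal{S}})$ and the verification of (A1)--(A2) are correct; the product argument does require the targets to be Kan complexes, which you correctly arrange via fibrant replacement and Lemma~\ref{LEMMATFIB}.

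The one real gap is (A3), which you leave at the level of ``should reduce to right properness.'' It does, and more easily than you suggest: there is no need to pass through homotopy-pullback squares relating $f$ to $Qf$. Since $\mathcal{W}_{\mathcal{S}^{\amalg}}\subseteq W_{loc}$ by Lemma~\ref{LEMMASCOPROD}, every cofibration that is a weak equivalence is a $Q$-trivial cofibration, so every $Q$-fibration is an honest fibration; it therefore suffices to show that the pullback of a $Q$-equivalence along a fibration of $\mathcal{S}^{\amalg,\Delta^{\op}}$ is a $Q$-equivalence. Now $\amalg$ preserves pullbacks because $\mathcal{S}$ is extensive (Proposition~\ref{PROPEXTENSIVE}), and it preserves fibrations because $\boxdot\Hom_l(\Lambda_{n,k}\hookrightarrow\Delta_n,-)$ is a finite-limit construction commuting with $\amalg$ and split epimorphisms are preserved by any functor. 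Hence applying $\amalg$ to the square yields a pullback of a weak equivalence along a fibration in $\mathcal{S}^{\Delta^{\op}}$, and Lemma~\ref{LEMMARIGHTPROPER1} gives the claim. With this step inserted your argument is complete.
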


\begin{proof}
We define on $\mathcal{S}^{\amalg, \Delta^{\op}}$ new classes $\mathcal{W}':=\amalg^{-1}(\mathcal{W})$, 
and $\Fib' := {}^\Box (\Cof \cap \mathcal{W}')$, and proceed to show that $(\Cof, \Fib', \mathcal{W}')$ is a model category structure, denoted by $\mathcal{S}^{\amalg, \Delta^{\op}}_{loc}$.
Note that $\mathcal{W}'$ satisfies 2-out-of-3 because $\mathcal{W}$ does. 

We have that $U(\Fib) \subset \Fib'$ because $\amalg$ preserves $\Cof$ and $\mathcal{W}'$ by construction.

We start by constructing a factorization into $\Cof \cap \mathcal{W}'$ and $\Fib'$ as follows. Let $f: X \rightarrow Y$ be a morphism in $\mathcal{S}^{\amalg, \Delta^{\op}}$. We factor 
$\amalg f$  in $\mathcal{S}^{\Delta^{\op}}$ as follows: 
\[ \xymatrix{ \amalg X \ar[rr]^-{\Cof \cap \mathcal{W}}  && Z \ar[rr]^-{\Fib} &&  \amalg Y . } \]
Consider the diagram
\[ \xymatrix{
X \ar[d] \ar[r]^{\Cof}  & X'  \ar[rr]^{\Fib \cap \mathcal{W}} & & Z' \ar[rr]  \ar[d] &&  Y \ar[d] \\
U \amalg X \ar[rrr]_{U(\Cof \cap \mathcal{W})} & & &  UZ \ar[rr]_{U(\Fib)} & & U \amalg Y
} \]
in which the right hand side square is Cartesian and the morphism $X \rightarrow Z'$ has been factored in $\mathcal{S}^{\amalg, \Delta^{\op}}$. Applying $\amalg$ to the left square, we see that 
the morphism $X \rightarrow X'$ is in fact in $\Cof \cap \mathcal{W}'$ (use that $\amalg$ commutes with fibre products and hence $\amalg Z' \cong \amalg U Z \cong Z$). 
The morphism in $\Fib \cap \mathcal{W}$ is obviously in $\Fib'$. Since $U(\Fib) \subset \Fib'$ also the pullback $Z' \rightarrow Y$ is in $\Fib'$. 
Is remains only to show $\Fib' \cap \mathcal{W}' = \Fib \cap \mathcal{W}$. Since $\Cof \Box \Fib \cap \mathcal{W}$ and $\mathcal{W} \subset \mathcal{W}'$ it follows that 
$\Fib \cap \mathcal{W} \subset \Fib' \cap \mathcal{W}'$. It remains to see that $\Cof \Box \Fib' \cap \mathcal{W}'$. Consider a diagram
\[ \xymatrix{
A \ar[r] \ar[dd]_{\Cof} & X \ar@{=}[rr] \ar[rd] \ar[dd]_{\Cof}  & & X \ar[dd]^{\Fib' \cap \mathcal{W}'}  \\  
& &  Y'  \ar[rd]_{\Fib \cap \mathcal{W}} \ar@{.>}[ru]  \\
B \ar[r] &  \lefthalfcap \ar[ru]^{\Cof}\ar[rr]  & & Y
}\]
in which the left square is coCartesian and the morphism $\lefthalfcap \rightarrow Y$ has been factored according to the decoration. 
By 2-out-of-3 the morphism $X \rightarrow Y'$ is in $\Cof \cap \mathcal{W}'$ and thus a lift exists using that $X \rightarrow Y$ is in $\Fib'$. 

The functors $\amalg$ and $U$ are obviously a Quillen adjunction between $\mathcal{S}^{\Delta^{\op}}$ and the localized structure $\mathcal{S}^{\amalg, \Delta^{\op}}_{loc}$. But now 
$\amalg$ and $U$ both preserve weak equivalences and the unit $X \rightarrow U \amalg X$ is in $\mathcal{W}'$ for every $X$. Hence we have a Quillen equivalence. 
\end{proof}

The relation to simplicial pre-sheaves is clarified by the following: 

\begin{PROP}
If $\mathcal{S}$ is small
there is an equivalence of categories with weak equivalences
\[ \xymatrix{ (\mathcal{S}^{\amalg, \Delta^{\op}}, \mathcal{W}) \ar[rr]^-R & &   (\mathcal{SET}^{\mathcal{S}^{\op} \times \Delta^{\op}}, \mathcal{W}) } \]
in which 
\[ R(\{U_i\}_i) (S) := \prod_i \Hom(S, U_i).  \]
Moreover the functor $R$ preserves also cofibrations and fibrations where on the right hand side the projective model category structure is considered.  
\end{PROP}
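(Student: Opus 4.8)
The plan is to recognize $R$ as the degreewise restricted Yoneda functor along the embedding $U\colon \mathcal S \hookrightarrow \mathcal S^{\amalg}$, and then to show it is a homotopical functor which is fully faithful, homotopically essentially surjective, and compatible with the simplicial enrichment, so that it induces an equivalence of the associated homotopy theories. First I would identify $R$: for $X \in \mathcal S^{\amalg,\Delta^{\op}}$ and $S \in \mathcal S$ one has $R(X)(S) = \Hom_r(U(S), X)$, i.e.\ $R$ is induced degreewise by the functor $\mathcal S^{\amalg} \to \mathcal{SET}^{\mathcal S^{\op}}$ sending a formal coproduct to the corresponding coproduct of representables $h_{U_i}$. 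Since each $U(S)$ is connected (Proposition~\ref{PROPEXTENSIVE}) and representables are connected in $\mathcal{SET}^{\mathcal S^{\op}}$, this functor is fully faithful (a presheaf morphism between coproducts of representables is exactly a morphism in $\mathcal S^{\amalg}$), with essential image the coproducts of representables. Hence $R$ is fully faithful, with essential image the simplicial presheaves that are degreewise coproducts of representables.

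Next I would treat the weak equivalences, where almost nothing is to be done. By \ref{PARCOND} the class $\mathcal W_{\mathcal S^{\amalg}}$ is defined through the class $\mathcal P$ of connected objects, which are precisely the $U(S)$, and $R(f)(S) = \Hom_r(U(S), f)$. Thus $f \in \mathcal W_{\mathcal S^{\amalg}}$ if and only if $R(f)(S)$ is a weak equivalence of simplicial sets for every $S$, i.e.\ if and only if $R(f)$ is a sectionwise weak equivalence; so $R$ both preserves and reflects $\mathcal W$. The same description (Lemma~\ref{LEMMATFIB}, 3) shows $f$ is a fibration exactly when each $R(f)(S)$ is a Kan fibration, i.e.\ exactly when $R(f)$ is a projective fibration; and $R$ sends the generating cofibrations $U(S)\otimes(\partial\Delta_n\to\Delta_n)$ to $h_S\otimes(\partial\Delta_n\to\Delta_n)$, so, preserving the relevant coproducts, pushouts and transfinite compositions, it carries cofibrations to projective cofibrations and the initial object to the initial object.

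I would then prove homotopical essential surjectivity. Given a simplicial presheaf $F$, take the projective cofibrant replacement $QF \to F$ produced by the small object argument on $\{h_S\otimes(\partial\Delta_n\to\Delta_n)\}$: it is a sectionwise weak equivalence, and $QF$ is a genuine cell complex, hence degreewise a coproduct of representables (cf.\ the cofibrancy description in Proposition~\ref{PROPADHESIVE} specialized to $\mathcal{SET}^{\mathcal S^{\op}}$). By full faithfulness $QF = R(\widetilde X)$ for some $\widetilde X \in \mathcal S^{\amalg,\Delta^{\op}}$, so every object is weakly equivalent to one in the image of $R$. Finally, to get homotopical full faithfulness I would use that both sides are simplicial model categories and that $R$ is a simplicial functor: it preserves the tensoring by finite simplicial sets (a finite colimit) and hence the simplicial hom, giving a natural isomorphism between $\Hom_r(X,Y)$ and the simplicial mapping space of $R(X), R(Y)$. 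Since $R$ takes cofibrant objects to projectively cofibrant ones and fibrant objects to projectively fibrant ones, for $X$ cofibrant and $Y$ fibrant this identifies the derived mapping spaces on the two sides; combined with essential surjectivity this yields the equivalence of homotopy theories, i.e.\ the asserted equivalence of categories with weak equivalences.

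The hard part will be this last step: upgrading the $1$-categorical full faithfulness together with the reflection of weak equivalences to an honest equivalence of homotopy theories. This forces one to check that $R$ is genuinely compatible with the simplicial enrichment and tensoring and that it preserves (co)fibrancy, so that derived mapping spaces are computed in the same way on both sides, and to fix the precise meaning of ``equivalence of categories with weak equivalences'' (e.g.\ a Dwyer--Kan equivalence of the associated relative categories) and verify it. By contrast, the creation of weak equivalences is essentially built into the definition of $\mathcal W_{\mathcal S^{\amalg}}$ in \ref{PARCOND}, and the essential surjectivity is a routine cofibrant replacement.
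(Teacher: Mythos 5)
Your proof follows essentially the same route as the paper's: full faithfulness of the (degreewise restricted Yoneda) functor $R$, creation of weak equivalences (which is built into the definition of $\mathcal{W}_{\mathcal{S}^{\amalg}}$ via the connected objects $U(S)$), and the observation that projective cofibrant replacement lands in the essential image of $R$. The only divergences are cosmetic: you obtain the last point from the cell-complex description of the small-object-argument replacement, whereas the paper argues via pointwise projectivity plus idempotent completeness of $\mathcal{S}$, and your additional verification of compatibility with the simplicial enrichment is more than the paper records but harmless.
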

\begin{proof}
We have the following three facts:

1.\@ $R$ is fully faithful. 

2. $\mathcal{W}$ on the left is precisely the preimage of $\mathcal{W}$ on the right under $R$. 

3. Cofibrant objects on the right (w.r.t.\@ the projective model category structure) are in the essential image of $R$. Indeed, the cofibrant objects in the transported model structure on $\mathcal{SET}^{\mathcal{S}^{\op} \times \Delta^{\op}}$ are, in particular, degree-wise projective w.r.t.\@ the weak factorization system $(\mathcal{L}_{\proj}, \mathcal{R}_{\proj})$ (cf.\@ Proposition~\ref{PROPADHESIVE}). Thus they are coproducts 
of representables because $\mathcal{S}$ is idempotent complete (having finite limits). 

Therefore $R^{-1} Q$ is an inverse (up to natural transformation consisting of weak equivalences) of $R$, where $Q$ is a cofibrant replacement functor and $R^{-1}$ is an inverse of $R$ on the essential image.  

The additional statement follows from Proposition~\ref{PROPFUNCTOR}. 
Indeed, $R$ commutes with finite limits and coproducts. Thus is commutes also with push-outs along morphisms in $\mathcal{L}_{\proj, \spl}$ and transfinite compositions of morphisms in $\mathcal{L}_{\proj, \spl}$. Furthermore we have obviously
\[ R(\mathcal{L}_{\proj, \spl}) \subset \mathcal{L}_{\proj} \quad \text{ and } \quad R(\mathcal{R}_{\proj, \spl}) \subset \mathcal{R}_{\proj}. \]
\end{proof}

\section{Bousfield-Kan formulas and the associated derivator}

Let $(\mathcal{L}, \mathcal{R})$ be a weak factorization system on a category $\mathcal{S}$. Let $I$ be a small category. Recall that $(\mathcal{L}^p, \mathcal{R}^p)$ is defined as the 
pair of classes of morphisms in $\mathcal{S}^I$ in which $\mathcal{R}^p$ consists of those morphisms point-wise in $\mathcal{R}$ and $\mathcal{L}^p:= {}^\Box \mathcal{R}^p$. 
It is not automatic that this constitutes a weak factorization system again. 
However, if $(\mathcal{L}, \mathcal{R})$ is of projective type, then the extension exists automatically (Proposition~\ref{PROPWFSDIAGRAM}). 
Theorem~\ref{THEOREMMODELDIAGRAM} below asserts that, if furthermore $(\mathcal{L}, \mathcal{R})$ yields a model category structure on $\mathcal{S}^{\Delta^{\op}}$,
 then also the projective extension yields a model category structure on $(\mathcal{S}^{\Delta^{\op}})^I$ {\em provided} the relevant factorizations exists. 
 This is in particular the case for the split-projective structure considered in Section~\ref{SECTIONSPLITPROJECTIVE}. 
  
\begin{SATZ}\label{THEOREMMODELDIAGRAM}
Let $\mathcal{S}$ be a category with finite limits and all coproducts, equipped with a fixed weak factorization system $(\mathcal{L}, \mathcal{R})$ such that 
the extensions $(\Cof_{\mathcal{S}}, \Tfib_{\mathcal{S}})$ and $(\Tcof_{\mathcal{S}}, \Fib_{\mathcal{S}})$ (\ref{PARTRANSPORTWFS}) assemble to a model category structure on $\mathcal{S}^{\Delta^{\op}}$.

Assume that 
$(\mathcal{L}^p, \mathcal{R}^p)$
exists and that the corresponding extensions $(\Cof_{\mathcal{S}^I}, \Tfib_{\mathcal{S}^I})$ and 
$(\Tcof_{\mathcal{S}^I}, \Fib_{\mathcal{S}^I})$ exist and are constructed by means of Proposition~\ref{PROPTCOFFIB}. 
Then these assemble to a model category structure on $\mathcal{S}^{I \times \Delta^{\op}}$ with weak equivalences $\mathcal{W}_I$ consisting of those natural transformations which are point-wise in $\mathcal{W}$. Furthermore, we have $(\Cof_{\mathcal{S}^I}, \Tfib_{\mathcal{S}^I}) = (\Cof_{\mathcal{S}}^p, \Tfib_{\mathcal{S}}^p)$ and $(\Tcof_{\mathcal{S}^I}, \Fib_{\mathcal{S}^I}) = (\Tcof_{\mathcal{S}}^p, \Fib_{\mathcal{S}}^p)$ 
In other words, the operations of extension to simplicial objects and forming projective extensions commute. 
\end{SATZ}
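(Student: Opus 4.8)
The plan is to prove the two asserted identities of weak factorization systems first --- these are exactly the ``commutation'' claim --- and then to read off the model structure from \ref{PARCOND} by checking its three conditions. Everything rests on the fact that all the structure involved is objectwise in the $I$-direction. Concretely, writing $i^{*}\colon \mathcal{S}^{I\times\Delta^{\op}}\to\mathcal{S}^{\Delta^{\op}}$ for evaluation at $i\in I$, I would record at the start that: limits and colimits in $\mathcal{S}^{I}$ are formed objectwise, so $i^{*}$ preserves every limit and colimit that exists; the tensoring is objectwise, $i^{*}(X\otimes K)=(i^{*}X)\otimes K$ and likewise $j_{!}(X\otimes K)=(j_{!}X)\otimes K$; the matching object $M_{n}$ is a finite limit, hence objectwise; the adjunction $j_{!}\dashv j^{*}$ gives natural isomorphisms $\Hom_{r}(j_{!}A,Z)=\Hom_{r}(A,j^{*}Z)$ and $\Hom(j_{!}P,Z)=\Hom(P,j^{*}Z)$; one has $i^{*}j_{!}A=\coprod_{\Hom_{I}(j,i)}A$, and $\coprod_{\Hom_{I}(j,i)}\phi$ lies in $\mathcal{L}$ for every $\phi\in\mathcal{L}$ because $\mathcal{L}$ is closed under coproducts; and, by Proposition~\ref{PROPWFSDIAGRAM} together with $j_{!}\dashv j^{*}$, the class $\mathcal{L}^{p}$ is generated by $\{j_{!}\phi\}$ as $\phi$ runs over a generating class of $\mathcal{L}$, while $\mathcal{R}^{p}$ is objectwise $\mathcal{R}$ and $j_{!}P$ is projective in $\mathcal{S}^{I}$ for $P\in\mathcal{P}$.

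For the ``easy'' pair I would use the matching-object description of $\Tfib$ from Proposition~\ref{PROPCOFTFIB}, valid in $\mathcal{S}^{I}$ since $\mathcal{S}^{I}$ has finite limits: a morphism $f$ lies in $\Tfib_{\mathcal{S}^{I}}$ iff $X_{n}\to M_{n}X\times_{M_{n}Y}Y_{n}$ is in $\mathcal{R}^{p}$, i.e.\ objectwise in $\mathcal{R}$; as $M_{n}$ is objectwise, this says precisely $i^{*}f\in\Tfib_{\mathcal{S}}$ for all $i$. Hence $\Tfib_{\mathcal{S}^{I}}=\Tfib_{\mathcal{S}}^{p}$ and, applying ${}^{\Box}(-)$, $\Cof_{\mathcal{S}^{I}}={}^{\Box}\Tfib_{\mathcal{S}^{I}}={}^{\Box}\Tfib_{\mathcal{S}}^{p}=\Cof_{\mathcal{S}}^{p}$. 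For the fibrations I would test $\Fib_{\mathcal{S}^{I}}$ against the generators $j_{!}\phi$ of $\mathcal{L}^{p}$ (it suffices to test against a generating class): by the adjunction, $\boxdot\Hom_{r}((j_{!}\phi)^{\op},f)=\boxdot\Hom_{r}(\phi^{\op},j^{*}f)$, so $f\in\Fib_{\mathcal{S}^{I}}$ iff $\boxdot\Hom_{r}(\phi^{\op},j^{*}f)\in\Fib$ for all $j$ and all generators $\phi$, i.e.\ iff $j^{*}f\in\Fib_{\mathcal{S}}$ for all $j$. Hence $\Fib_{\mathcal{S}^{I}}=\Fib_{\mathcal{S}}^{p}$ and $\Tcof_{\mathcal{S}^{I}}={}^{\Box}\Fib_{\mathcal{S}^{I}}=\Tcof_{\mathcal{S}}^{p}$. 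These two computations are the claimed commutation. Finally, defining $\mathcal{W}_{\mathcal{S}^{I}}$ via the projectives $\mathcal{P}^{p}=\{j_{!}P\}$ as in \ref{PARCOND}, the isomorphism $\Hom(j_{!}P,f)=\Hom(P,j^{*}f)$ shows $f\in\mathcal{W}_{\mathcal{S}^{I}}$ iff $j^{*}f\in\mathcal{W}_{\mathcal{S}}$ for all $j$, i.e.\ $\mathcal{W}_{\mathcal{S}^{I}}$ is the class of objectwise weak equivalences.

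With the classes identified I would verify the three conditions of \ref{PARCOND}. The $2$-out-of-$3$ property and conditions $2$ ($\Tfib_{\mathcal{S}^{I}}\subset\mathcal{W}_{\mathcal{S}^{I}}$) and $3$ ($\Fib_{\mathcal{S}^{I}}\cap\mathcal{W}_{\mathcal{S}^{I}}\subset\Tfib_{\mathcal{S}^{I}}$) are immediate because $\Fib_{\mathcal{S}^{I}},\Tfib_{\mathcal{S}^{I}},\mathcal{W}_{\mathcal{S}^{I}}$ are all objectwise and the corresponding inclusions already hold in the model category $\mathcal{S}^{\Delta^{\op}}$; indeed objectwise one gets $\Tfib_{\mathcal{S}^{I}}=\Fib_{\mathcal{S}^{I}}\cap\mathcal{W}_{\mathcal{S}^{I}}$ at once. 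The remaining condition $1$, $\Tcof_{\mathcal{S}^{I}}\subset\mathcal{W}_{\mathcal{S}^{I}}$, is the one point that cannot be seen objectwise, since the left classes $\Tcof$ and $\Cof$ are not objectwise. Here I would argue: by the construction of Proposition~\ref{PROPTCOFFIB} (case~$1$, which covers the intended applications) every $f\in\Tcof_{\mathcal{S}^{I}}$ is a retract of a transfinite composite of pushouts of the generators $(j_{!}\phi)\boxplus(\Lambda_{n,k}\to\Delta_{n})$; applying $i^{*}$, which preserves retracts, pushouts and transfinite composites and sends each such generator to $(\coprod_{\Hom_{I}(j,i)}\phi)\boxplus(\Lambda_{n,k}\to\Delta_{n})\in\Tcof_{\mathcal{S}}$, and using that $\Tcof_{\mathcal{S}}$ is closed under exactly these operations, we get $i^{*}f\in\Tcof_{\mathcal{S}}\subset\mathcal{W}_{\mathcal{S}}$, so $f\in\mathcal{W}_{\mathcal{S}^{I}}$. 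The identity $\Tcof_{\mathcal{S}^{I}}=\Cof_{\mathcal{S}^{I}}\cap\mathcal{W}_{\mathcal{S}^{I}}$ then follows from the retract argument, using Lemma~\ref{LEMMATRIVIAL} and that $(\Tcof_{\mathcal{S}^{I}},\Fib_{\mathcal{S}^{I}})$ is assumed to be a weak factorization system; right properness is Lemma~\ref{LEMMARIGHTPROPER1}, and the simplicial enrichment, being objectwise, transports axiom SM7 from $\mathcal{S}^{\Delta^{\op}}$.

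The main obstacle is precisely condition $1$: because $\Tcof$ and $\Cof$ are generated rather than objectwise, $\Tcof_{\mathcal{S}^{I}}\subset\mathcal{W}_{\mathcal{S}^{I}}$ cannot be checked pointwise, and one must instead exploit that evaluation preserves objectwise colimits and carries the explicit generators of $\Tcof_{\mathcal{S}^{I}}$ into $\Tcof_{\mathcal{S}}$, whose closure under pushout and transfinite composition keeps us inside the trivial cofibrations. In the one case where $\Tcof_{\mathcal{S}^{I}}$ is not presented by such generators (Proposition~\ref{PROPTCOFFIB}, case~$2$, ``every object fibrant''), condition~$1$ is instead obtained by repeating the strong deformation retract argument of Theorem~\ref{THEOREMMODELSOBJECTSI}, part~$2$, on $\mathcal{S}^{I\times\Delta^{\op}}$, which is legitimate because fibrancy of all objects passes to $\mathcal{S}^{I\times\Delta^{\op}}$ (a morphism to the terminal object is objectwise a fibration). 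A minor point worth checking is that all the adjunction and matching-object identities use only finite limits and the coproducts, pushouts and transfinite composites already assumed in $\mathcal{S}$, so nothing beyond the stated hypotheses is required.
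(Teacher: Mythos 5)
Your proposal is correct and follows essentially the same route as the paper: identify $\Tfib_{\mathcal{S}^I}$ and $\Fib_{\mathcal{S}^I}$ as the pointwise classes (whence conditions 2 and 3 of \ref{PARCOND} and the commutation statement are immediate), and establish the one non-pointwise inclusion $\Tcof_{\mathcal{S}^I}\subset\mathcal{W}_I$ from the explicit cell-complex construction of factorizations in Proposition~\ref{PROPTCOFFIB}. You merely spell out in more detail what the paper leaves implicit (that evaluation at $i$ carries the generators $(j_!\phi)\boxplus(\Lambda_{n,k}\to\Delta_n)$ into $\Tcof_{\mathcal{S}}$, which is closed under the relevant operations), and you additionally cover case 2 of that proposition, which is a reasonable extra precaution.
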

 
\begin{proof}
From the description of the pair $(\Cof_{\mathcal{S}^I}, \Tfib_{\mathcal{S}^I})$ one can infer that 
$\Tfib_{\mathcal{S}^I}$ consists of those morphisms that are point-wise in $\Tfib_{\mathcal{S}}$. For that pair therefore the claimed commutativity is clear. 

Define $\mathcal{W}_I$ to be the class of those morphisms that are point-wise in $\mathcal{W}$. 
Assuming that factorizations exist, we have to show
\[ \Tcof_{\mathcal{S}^I} \subset  \mathcal{W}_I   \]
\[ \Tfib_{\mathcal{S}^I} \subset  \mathcal{W}_I   \]
\[ \Fib_{\mathcal{S}^I} \cap  \mathcal{W}_I  \subset \Tfib_{\mathcal{S}^I}. \]
The fourth inclusion follows from the retract argument, as usual. 

We have $\Tcof_{\mathcal{S}^I} \subset  \mathcal{W}_I$ because from the explicit construction of factorizations for $(\Tcof_{\mathcal{S}^I}, \Fib_{\mathcal{S}^I})$, cf.\@ the proof of Proposition~\ref{PROPTCOFFIB}, we see that morphisms in $\Tcof_{\mathcal{S}^I}$ are, in particular, point-wise trivial cofibrations. 
We see immediately that $\Fib_{\mathcal{S}^I}$ consists of those morphisms point-wise in $\Fib_{\mathcal{S}}$, because
a morphism is a fibration if for all  $k, n$ we have
\[ \boxdot \Hom(\Lambda_{n,k} \hookrightarrow \Delta_n, f) \in \mathcal{R}^p \]
and $\boxdot \Hom(\Lambda_{n,k} \hookrightarrow \Delta_n, -)$ is computed  point-wise. From this it follows that 
\[ \Fib_{\mathcal{S}^I} \cap  \mathcal{W}_I = \Tfib_{\mathcal{S}^I} \]
because the equation holds point-wise. 
Since $\Fib_{\mathcal{S}^I}$ and $\Tfib_{\mathcal{S}^I}$ are thus characterized point-wise the extended weak factorization systems are also the projective extensions of the ones on $\mathcal{S}^{\Delta^{\op}}$.
\end{proof}

The goal of the rest of this section is to prove that for $\mathcal{S}^{\Delta^{\op}}$ homotopy limits for (homotopically) finite diagrams and homotopy colimits for arbitrary diagrams exist. Also homotopy Kan extensions exist --- in particular, we get an associated derivator (left derivator on all diagrams and right derivator on homotopically finite diagrams).

To prove a result of this nature there are, as usual, two main strategies: Either one uses model category structures on $(\mathcal{S}^{\Delta^{\op}})^I$, as constructed above, and  
derives the usual limit or colimit functor using the machinery of model categories. Or one uses the simplicial structure to construct homotopy limits and colimits explicitly by establishing  
a Bousfield-Kan formula. We will follow the second strategy here. It gives the results in their most general form without the need for any injective structure on  $(\mathcal{S}^{\Delta^{\op}})^I$ which we did not construct. 
Assume, as usual,  that $\mathcal{S}$ has all colimits and all finite limits.

\begin{SATZ}\label{SATZHOLIM}
Let $\mathcal{S}$ be a category with all coproducts and finite limits. 
Let $\mathcal{S}^{\Delta^{\op}}$ be equipped with a simplicial model category structure as above (with functorial factiorizations). 
Then homotopy colimits exist, i.e.\@ for all diagrams $I$ there is an adjunction
\[ \xymatrix{  (\mathcal{S}^{\Delta^{\op}})^I[\mathcal{W}_I^{-1}] \ar@<-5pt>[rr]_{\hocolim} & &  \ar@<-5pt>[ll]_{p_I^*}   \mathcal{S}^{\Delta^{\op}}[\mathcal{W}^{-1}] }  \]
with $\hocolim$ left adjoint, given for point-wise cofibrant diagrams by the formula
\[ \hocolim X = \int^{I} N(- \times_{/I} I) \otimes X.  \]
(This particular coend is computed by means of coproducts and thus exists, see Remark~\ref{REMCOPROD} below)

Similarly, homotopy limits exist for $I$ homotopically finite, i.e.\@ there is an adjunction
\[ \xymatrix{  (\mathcal{S}^{\Delta^{\op}})^I[\mathcal{W}_I^{-1}] \ar@<5pt>[rr]^{\holim} & &  \ar@<5pt>[ll]^{p_I^*}  \mathcal{S}^{\Delta^{\op}}[\mathcal{W}^{-1}] }  \]
with $\holim$ right adjoint, given for point-wise fibrant diagrams by the formula
\[ \holim X = \int_{I} \Hom_l( N(I \times_{/I} -),  X).  \]
(This particular end is computed by means of finite limits and thus exists, cf.\@ \ref{PARSIMPLICIALSTRUCT}.)
\end{SATZ}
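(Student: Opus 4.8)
The plan is to establish the two adjunctions separately, in each case first verifying that the proposed Bousfield-Kan formula lands in the right place and computes the derived functor, then appealing to the general theory of derived functors of Quillen adjunctions to get the induced adjunction on homotopy categories. The key structural fact I would exploit throughout is that the model structures are assembled degree-wise (point-wise) from the weak factorization systems on $\mathcal{S}$, so that $\pi^*: \mathcal{S}^{\Delta^{\op}} \to (\mathcal{S}^{\Delta^{\op}})^I$ (the constant diagram functor) is a right Quillen functor for the projective-type structure: it obviously preserves point-wise fibrations and point-wise weak equivalences, hence preserves $\Fib_{\mathcal{S}^I}$ and $\mathcal{W}_I$ as characterized in Theorem~\ref{THEOREMMODELDIAGRAM}. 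Dually, $\pi^*$ is left Quillen for the homotopy-limit statement under the injective-type characterization. The existence of the adjunctions on localizations then follows formally once I know the relevant left/right derived functors exist.

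For the \emph{homotopy colimit} half, I would first observe that the coend formula $\int^{i \in I} N(i \times_{/I} I) \otimes X(i)$ is built using the tensoring $\otimes: \mathcal{S} \times \mathcal{SET}^{\Delta^{\op}} \to \mathcal{S}^{\Delta^{\op}}$ from~\ref{PARSIMPLICIALSTRUCT} together with coproducts, so it exists in $\mathcal{S}^{\Delta^{\op}}$ despite the absence of general colimits (this is the content of the promised Remark~\ref{REMCOPROD}). The main work is to show that this formula computes the left derived functor of $\colim = \pi_!$, i.e.\@ that for a point-wise cofibrant $X$ the canonical map from the Bousfield-Kan coend to (a point-wise cofibrant replacement of) the honest colimit is a weak equivalence, and that the construction sends point-wise weak equivalences between point-wise cofibrant diagrams to weak equivalences. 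The latter I would reduce, via the formula $\Hom(P, -)$ for $P \in \mathcal{P}$ and the fact that $\Hom(P,-)$ commutes with the relevant coproducts and tensors (using that the objects of $\mathcal{P}$ are $\N$-small or connected, cf.\@ Lemma~\ref{LEMMACONNECTED} and Lemma~\ref{LEMMASCOPROD}), to the corresponding classical statement in simplicial sets: $\int^{i} N(i \times_{/I} I) \otimes \Hom(P, X(i))$ is the standard homotopy-colimit formula, which is homotopy invariant on point-wise cofibrant diagrams.

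For the \emph{homotopy limit} half, the formula $\int_{i \in I} \Hom(N(I \times_{/I} i), X(i))$ uses $\Hom_l$, the cotensor over finite simplicial sets, which exists because $\mathcal{S}$ has finite limits; the coend over $I$ becomes an end, computed as a finite limit precisely because $I$ is homotopically finite, so only finitely many nondegenerate simplices of the nerves contribute up to the relevant truncation. I would argue dually: test against $\Hom(P,-)$, reduce to the classical statement that the Bousfield-Kan homotopy-limit formula computes $\holim$ for point-wise fibrant diagrams of simplicial sets, and use homotopical finiteness to ensure the cotensor is by a (homotopically) finite simplicial set so that the formula is well defined and homotopy invariant.

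The hard part will be the careful bookkeeping showing that the Bousfield-Kan coend and end actually \emph{exist} in $\mathcal{S}^{\Delta^{\op}}$ given only coproducts and finite limits --- and that they agree, on cofibrant (resp.\@ fibrant) inputs, with the derived $\pi_!$ (resp.\@ $\pi_*$) one would get if all colimits existed. Concretely, the obstacle is that $\pi_!$ need not exist as an honest functor, so I cannot simply cite ``the derived functor of a left Quillen functor''; instead I must verify directly that the explicit formula satisfies the required universal property \emph{up to homotopy}, i.e.\@ represents the correct mapping space $\Hom_{\mathcal{S}^{\Delta^{\op}}[\mathcal{W}^{-1}]}(\hocolim X, Z) \cong \Hom_{(\mathcal{S}^{\Delta^{\op}})^I[\mathcal{W}_I^{-1}]}(X, \pi^* Z)$. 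This I would do by computing both sides after applying $\Hom(P,-)$ and reducing to the known adjunction for simplicial sets, using that weak equivalences and fibrations are detected point-wise by the objects of $\mathcal{P}$.
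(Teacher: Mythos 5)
Your proposal correctly identifies the two real difficulties (the coend need not exist a priori, and $\pi_!$ need not exist as an honest functor), but the resolutions you sketch for both have genuine gaps. For existence, you defer to Remark~\ref{REMCOPROD}, but that remark is stated as a \emph{consequence} of the proof, not an input to it: the actual content is the rewriting $\int^{i \in I} N(i \times_{/I} I) \otimes X(i) = \int^{\Delta^{\op}} \delta \otimes (p_! i^* X)$ via the Grothendieck construction $\int N(I) \rightarrow \Delta^{\op}$, which exhibits the coend as the diagonal of the bisimplicial object $\coprod_{i_0 \rightarrow \cdots \rightarrow i_n} X(i_0)_m$ and hence as something built from coproducts alone. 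Without this (or an equivalent) reduction your formula is not yet known to denote an object of $\mathcal{S}^{\Delta^{\op}}$.

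The more serious gap is in how you propose to obtain the adjunction on localized categories. Applying $\Hom(P,-)$ for $P \in \mathcal{P}$ detects weak equivalences of \emph{objects}, but it cannot be applied to, and does not compute, the hom-sets $\Hom_{\mathcal{S}^{\Delta^{\op}}[\mathcal{W}^{-1}]}(\hocolim X, Z)$; so "computing both sides after applying $\Hom(P,-)$" does not establish the universal property. What is needed, and what the paper's proof supplies, is an honest right adjoint to the coend functor, namely $C: Z \mapsto \{ i \mapsto \Hom(N(i \times_{/I} I), Z)\}$, together with the verification that $C$ sends $\Fib_{\mathcal{S}}$ and $\Tfib_{\mathcal{S}}$ into the right-lifting classes of the point-wise (trivial) cofibrations, and that $C(Z) \rightarrow \pi^* Z$ is a point-wise weak equivalence for fibrant $Z$ (contractibility of the nerves); Ken Brown's lemma then gives homotopy invariance of the coend on point-wise cofibrant diagrams and identifies its derived functor as left adjoint to the derived $\pi^*$. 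Two further points: your reduction to simplicial sets needs $\Hom(P,-)$ to commute with the (infinite) coproducts occurring in the coend, which holds for connected $P$ but in the stated generality (e.g.\@ $\mathcal{P}$ all projectives, no smallness) is not available; and for the $\holim$ half you invoke an injective-type structure to make $\pi^*$ left Quillen, whereas no injective model structure is assumed to exist --- the paper deliberately works only with the bare classes of point-wise (trivial) cofibrations and the two-variable adjunction $\Hom_l \dashv \Hom_r$ to avoid exactly this.
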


\begin{BEM}\label{REMCOPROD}
From the proof it follows that more explicitly
\[  \left( \int^{I} N(- \times_{/I} I) \otimes X \right)_n = \widetilde{X}_{n,n}, \]
where $\widetilde{X}$ is the bisimplicial object
\[ \widetilde{X}_{n,m} = \coprod_ {i_0 \rightarrow \cdots \rightarrow i_n} X(i_0)_m.  \]
So the coend exists and is computed by means of coproducts.  
\end{BEM}

We first prove:

\begin{LEMMA}\label{LEMMAHOCOLIM}The functor defined for $X \in (\mathcal{S}^{\Delta^{\op}})^{I}$ by
\[ \hocolim_I X := \int^{I} N(- \times_{/I} I) \otimes X \]
maps object-wise weak equivalences between object-wise cofibrant objects to weak equivalences provided that the coend exists, and the functor defined for $X \in (\mathcal{S}^{\Delta^{\op}})^{I}$ by
\[ \holim X := \int_{I}\Hom(N(I \times_{/I} -), X) \]
maps object-wise weak equivalences between object-wise fibrant objects to weak equivalences provided that the end exists.
\end{LEMMA}
\begin{proof}
We concentrate on the case of the coend, the other argument is completely dual. 
If the coend exists, consider the functor associating with $X \in (\mathcal{S}^{\Delta^{\op}})^{I}$ and $Y \in \mathcal{S}^{\Delta^{\op}}$ the set
\begin{eqnarray*} 
F(X,Y) &:=& \Hom_{\mathcal{S}^{\Delta^{\op}}}( \int^{I} N(- \times_{/I} I ) \otimes X, Y)   \\
 &\cong& \Hom_{(\mathcal{SET}^{\Delta^{\op}})^{I^{\op}}}(N(- \times_{/I} I), \Hom_r(X, Y)) 
 \end{eqnarray*}
We have to show that $\boxdot F(f, g)$ is surjective, if $g$ is a trivial fibration and $f$ a point-wise cofibration, 
and also if $g$ is a fibration and $f$ a point-wise trivial cofibration. Indeed, this shows that $X \mapsto \int^{I} N(- \times_{/I} I ) \otimes X$ maps point-wise (trivial) cofibrations to (trivial) cofibrations and thus
preserves all weak equivalences between point-wise cofibrant objects. (In any case, point-wise cofibrant objects form a category of cofibrant objects, which is enough to be able to apply Ken Brown's Lemma.)

We have
\[ \boxdot F(f, g) = \Hom_{(\mathcal{SET}^{\Delta^{\op}})^{I^{\op}}}(N(- \times_{/I} I), \boxdot \Hom_r(f, g)) \]
and if $f$ and $g$ are of the kind above,  $\boxdot \Hom_r(f, g)$ is a point-wise trivial fibration (i.e.\@ a projective trivial fibration). Since the diagram
$i \mapsto N(i \times_{/I} I)$ is projectively cofibrant, the statement follows. 
\end{proof}

\begin{proof}[Proof of Theorem~\ref{SATZHOLIM}]
For the case of homotopy colimits 
consider the diagram
\[ \xymatrix{ I &  \ar[l]_-\iota \int N(I) \ar[r]^-p & \Delta^{\op} } \]
where $\int N(I)$ is the Grothendieck construction applied to the functor $N(I): \Delta^{\op} \rightarrow \mathcal{SET} \subset \Cat$. The functor $p$ is thus an opfibration. 
The functor $\iota$ maps an element $(\Delta_n, \mu: \Delta_n \rightarrow I)$ to $\mu(0)$.
With those functors we have %the formula using Lemma~\ref{LEMMAEND}
\[ \int^{I} N(- \times_{/I} I) \otimes X = \int^{\Delta^{\op}} \delta \otimes (p_! \iota^* X)   \]
in the sense that, if one coend exists then also the other does, and we have a canonical isomorphism. This follows, using Lemma~\ref{LEMMAEND} for the coend, formally from $N(- \times_{/I} I) = \iota_!^{\op} (p^{\op})^* \delta$ where $\delta$ is the canonical cosimplicial simplicial set. 
However, for (bi)simplicial objects $X: \Delta^{\op} \rightarrow \mathcal{S}^{\Delta^{\op}}$ the coend
$\int^{\Delta^{\op}} \delta \otimes X$ always exists and is given by the diagonal simplicial set $(X_{n,n})_n$. This yields the formula given in Remark~\ref{REMCOPROD}. 
Hence the coend exists and by Lemma~\ref{LEMMAHOCOLIM} maps object-wise weak equivalences between object-wise cofibrant objects to weak equivalences. 
We want to apply Theorem~\ref{THEOREMHOCOLIM} to the full subcategory of point-wise cofibrant objects. Observe that then on $\mathcal{S}^{\Delta^{\op}}$ one gets homotopy colimit functors
by composition with a (functorial) cofibrant replacement. 

We use the consideration in \ref{EXAMPLEBK} to establish that $\hocolim_I$ as defined in the statement of the theorem is a calculus of homotopy colimits in the sense of Definition~\ref{DEFFUNCTCALCHOCOLIM}. 
 Note that here there are canonical isomorphisms which can be taken as $\rho$ and $\epsilon$. We proved that 1.\@ the occurring coend exists and 2.\@ the functor $\hocolim_I$ 
 maps point-wise weak equivalences to weak equivalences. Hence we are left to show that $1'$ is a weak equivalence. 
 However, more generally for homotopically finite diagrams, the functor
\[ \hocolim_I X := \int^{I} N(- \times_{/I} I) \otimes X \]
has a right adjoint
\[ Y \mapsto \Hom(N(- \times_{/I} I, Y). \]
This functor maps weak equivalences between fibrant $Y$ to object-wise weak equivalences. Furthermore
the canonical
\[  \Hom(N(- \times_{/I} I), Y) \rightarrow \Hom(\Delta_0, Y) = Y \]
is a point-wise weak equivalence for fibrant $Y$ because $N(i \times_{/I} I)$ is contractible for all $i$.  Therefore $\hocolim_I Q$ (where $Q$ is a functorial cofibrant replacement in $\mathcal{S}^{\Delta^{\op}}$) on the level of homotopy categories is adjoint to a functor which is isomorphic to the constant functor. 
This shows that $\hocolim_I$ is a homotopy colimit for {\em homotopically finite} $I$. In particular, it follows that $1': 1^*X \rightarrow \hocolim_{\Delta_1} X$ is an isomorphism in the homotopy category and thus it is a weak equivalence. 

In the case of homotopy limits, the end in question always exists because we assumed that $I$ is homotopically finite (i.e.\@ $N(I)$ and hence also all $N(I \times_{/I} i)$ are finite simplicial sets, cf.\@ also \ref{PARSIMPLICIALSTRUCT}) and $\mathcal{S}$ is assumed to have finite limits. In this case $\holim_I$ always has a left adjoint whose derived functor is isomorphic to the constant functor. Hence also in this case $1'$ is a weak equivalence,
and we need to invoke Theorem~\ref{THEOREMHOCOLIM} only to get right homotopy Kan extensions\footnote{Alternatively, one can show in this case  that the functor $\alpha_*$ as defined in the statement of Theorem~\ref{THEOREMHOCOLIM} has a left adjoint (before taking homotopy categories) whose derived functor is isomorphic to $\alpha^*$.}.
\end{proof}

\begin{KOR}\label{KORDERIVATOR}
For a functor $\alpha: I \rightarrow J$ between small categories there is an adjunction
\[ \xymatrix{  (\mathcal{S}^{\Delta^{\op}})^I[\mathcal{W}_I^{-1}] \ar@<-5pt>[rr]_{\alpha_!} & &  \ar@<-5pt>[ll]_{\alpha^*}   (\mathcal{S}^{\Delta^{\op}})^J[\mathcal{W}_J^{-1}] }  \]
where the left adjoint (homotopy left Kan extension) is given for point-wise cofibrant objects by the formula
\[ (\alpha_! X)_j =  \hocolim_{I \times_{/J} j} \iota^* X \]
in which $\iota: I \times_{/J} j \rightarrow I$ is the projection and $\hocolim$ denotes the explicit functorial homotopy colimit of Theorem~\ref{SATZHOLIM}.

For a functor $\alpha: I \rightarrow J$ between homotopically finite categories there is an adjunction
\[ \xymatrix{  (\mathcal{S}^{\Delta^{\op}})^I[\mathcal{W}_I^{-1}] \ar@<5pt>[rr]^{\alpha_*} & &  \ar@<5pt>[ll]^{\alpha^*}   (\mathcal{S}^{\Delta^{\op}})^J[\mathcal{W}_J^{-1}] }  \]
where the right adjoint (homotopy right Kan extension) is given for point-wise fibrant objects by the formula
\[ (\alpha_* X)_j = \holim_{j \times_{/J} I} \iota^* X \]
in which $\iota: j \times_{/J} I \rightarrow I$ is the projection, and $\holim$ denotes the explicit functorial homotopy limit of Theorem~\ref{SATZHOLIM}.

In particular the association
\[ I \mapsto \DD(I):= (\mathcal{S}^{\Delta^{\op}})^I[\mathcal{W}_I^{-1}] \]
is a left derivator on all diagrams and a right derivator on homotopically finite diagrams. 
\end{KOR}
\begin{proof}
It was already shown in the proof of Theorem~\ref{SATZHOLIM} that $\hocolim$ (resp.\@ $\holim$) is a transitive calculus of homotopy Kan extensions on point-wise cofibrant (resp.\@ fibrant) objects. Hence,
by Theorem~\ref{THEOREMHOCOLIM} in the appendix, the claimed formulas yield homotopy Kan extensions and the pre-derivator is a left derivator on all small categories (resp.\@ a right derivator
on homotopically finite diagrams). 
\end{proof}

\appendix

\section{Functorial homotopy (co)limits}

In this appendix, we discuss an explicit construction of homotopy (co)limits and Kan extensions, which was used in the proof of Theorem~\ref{SATZHOLIM} and Corollary~\ref{KORDERIVATOR}.
It is of wide applicability --- not only for simplicial categories. The discussion is completely formal and we state the definitions and consequences only for the case of colimits. 
An analogous theory for limits exists and is completely dual. We leave the translation to the reader. 

Consider a category with weak equivalences $(\mathcal{C}, \mathcal{W})$.
Assume that $\mathcal{W}$ is saturated\footnote{i.e.\@ a morphism is in $\mathcal{W}$ if and only if it becomes an isomorphism in $\mathcal{C}[\mathcal{W}^{-1}]$}. Let $\Dia$ be either the category of homotopically finite diagrams (finite directed or, equivalently, inverse categories) or the category of all diagrams.  

\begin{DEF}\label{DEFFUNCTCALCHOCOLIM}
A {\bf functorial\footnote{the adjective ``functorial'' indicates that functoriality in the argument and in the diagram holds in the category $\mathcal{C}$ {\em before} inverting $\mathcal{W}$ } calculus of homotopy colimits} for $(\mathcal{C}, \mathcal{W})$ is the following datum: 
\begin{enumerate} 
\item For $I \in \Dia$ a functor 
\[ \hocolim_I:  \mathcal{C}^I \rightarrow \mathcal{C}. \]
\item For each functor $\alpha: I \rightarrow J$ in $\Dia$ a natural transformation 
\[ \alpha':  \hocolim_I \alpha^* \rightarrow \hocolim_J. \]
\item A natural transformation 
\[ \mathrm{can}: \hocolim_{\{\cdot\}} \rightarrow \id  \]
\end{enumerate} 
satisfying the following axioms: 
\begin{enumerate} 
\item[(HC1)] $\hocolim_I$ maps point-wise weak equivalences to weak equivalences, and $\mathrm{can}$ is an object-wise weak equivalence; 
\item[(HC2)] For all $I \in \Dia$, considering the functor $(\id,1): I \times \Delta_0 \hookrightarrow I \times \Delta_1$, the transformation $(\id,1)'$ is an object-wise weak equivalence;
\item[(HC3)] 
$\id_I'$ is the identity for all $I \in \Dia$, and for $\alpha: I \rightarrow J$ and $\beta: J \rightarrow K$ the diagram
\begin{equation*} \xymatrix{  \hocolim_I (\beta \alpha)^*  \ar[r]^-{(\beta\alpha)'} \ar@{=}[d] &  \hocolim_K \\
 \hocolim_I  \alpha^* \beta^* \ar[r]_{\alpha'} &  \hocolim_J   \beta^*  \ar[u]_{\beta'}
 } 
 \end{equation*} 
 commutes;
\end{enumerate} 
\end{DEF}

 Assume that a category with weak equivalences with a calculus of homotopy colimits is given. 
 
 \begin{LEMMA}\label{LEMMAA2}
 Let $\alpha, \beta: I \rightarrow J$ be functors in $\Dia$ and $\mu: \alpha \Rightarrow \beta$ a natural transformation. Then the two compositions
 \[ \xymatrix{ \hocolim_{I} \alpha^* \ar[r]^{\mu^*}  & \hocolim_{I} \beta^* \ar[r]^{\beta'} & \hocolim_{J(i)} }  \]
 \[ \xymatrix{ \hocolim_{I} \alpha^* \ar[r]^{\alpha'} & \hocolim_{J} }  \]
 are equal in $\Hom(\mathcal{C}^{I}, \mathcal{C})[\mathcal{W}_{\mathcal{C}^I}^{-1}]$. 
 \end{LEMMA}
 
\begin{proof}
Let $M: I \times \Delta_1 \rightarrow J$ be the functor encoding $\mu$ and let $\widetilde{\mu}: M \Rightarrow \beta \circ \pr_1$ be the canonical natural transformation. 
We have the commutative diagrams
 \[ \xymatrix{ 
 \hocolim_{I} \alpha^* \ar[r]^-{(\id,0)'} \ar[d]^{\mu^*} \ar@/^20pt/[rr]^{\alpha'}  & \hocolim_{I \times \Delta_1}  M^* \ar[r]^-{M'} \ar[d]^{\widetilde{\mu}^*}& \hocolim_{J}  \\
 \hocolim_{I} \beta^* \ar[r]^-{(\id,0)'} \ar@/_20pt/[rr]_{\beta'}  & \hocolim_{I \times \Delta_1}  \pr_1^* \beta^*  \ar[r]^-{\beta' \pr_1'} & \hocolim_{J} 
 }   \]
 and
 \[ \xymatrix{ 
 \hocolim_{I} \beta^* \ar[r]^-{(\id,1)'} \ar@{=}[d]  \ar@/^20pt/[rr]^{\beta'}  & \hocolim_{I \times \Delta_1}  M^* \ar[r]^-{M'} \ar[d]^{\widetilde{\mu}^*}& \hocolim_{J}   \\
 \hocolim_{I} \beta^* \ar[r]^-{(\id,1)'}  \ar@/_20pt/[rr]_{\beta'}   & \hocolim_{I \times \Delta_1}  \pr_1^* \beta^*  \ar[r]^-{\beta' \pr_1'} & \hocolim_{J} 
 }   \]
 Both morphisms denoted $(\id,1)'$ are weak equivalences by (HC2) and both horizontal compositions to $\hocolim_{J}$ are the same. Thus we have $M' = \beta' \pr_1' \widetilde{\mu}^*$ in $\Hom(\mathcal{C}^I, \mathcal{C})[\mathcal{W}_{\mathcal{C}^I}^{-1}]$.
 Therefore also $\beta' \mu^*  = \alpha'$ in $\Hom(\mathcal{C}^I, \mathcal{C})[\mathcal{W}_{\mathcal{C}^I}^{-1}]$.
\end{proof}

\begin{LEMMA}\label{LEMMAA3}
 Let $i: \{\cdot\} \hookrightarrow I$ be the inclusion of a final object. Then $i'$ is a weak equivalence. 
\end{LEMMA}
\begin{proof}
We have the natural transformation $\nu: \id \rightarrow i \circ p_I$. Therefore, by Lemma~\ref{LEMMAA2},
\[   \xymatrix{ \hocolim_I   \ar[r]^-{\nu^*} &   \hocolim_I  p_I^* i^* \ar[r]^{p_I'} &  \hocolim_{\{\cdot\}} i^* \ar[r]^{i'}  &  \hocolim_{I} }  \]
is the same as the identity in $\Hom(\mathcal{C}^I, \mathcal{C})[\mathcal{W}_{\mathcal{C}^I}^{-1}]$.  
Furthermore, the composition
\[  \xymatrix{ \hocolim_{\{\cdot\}} i^* \ar[r]^-{i'}  \ar@/_10pt/[rr]_-{i'}  &  \hocolim_I   \ar[r]^-{\nu^*}  &   \hocolim_I  p_I^* i^* \ar[r]^{p_I'}  &  \hocolim_{\{\cdot\}} i^* }   \]
is the identity. Since $\mathcal{W}$ is saturated the statement follows. 
\end{proof}

\begin{PAR}
The goal is to prove: given a functorial calulus of homotopy colimits, we have that
\[ X \mapsto \alpha_! X := ( j \mapsto \hocolim_{I\times_{/J} j} \iota_j^* X ) \]
is a relative Kan extension, i.e.\@ that the above functor defines an adjunction
\[ \xymatrix{ \mathcal{C}^{I}[\mathcal{W}_I^{-1}] \ar@<-3pt>[rr]_{\alpha_!} & & \ar@<-3pt>[ll]_{\alpha^*} \mathcal{C}^{J}[\mathcal{W}_J^{-1}]  }\]
with $\alpha_!$ left adjoint. The functoriality of $\alpha_!$ is given by the following morphism, letting $\mu: j \rightarrow j'$ be a morphism in $J$ and denoting by $\mu$ also
the corresponding functor ``composition'' $I \times_{/J} j \rightarrow I \times_{/J} j'$:
\[ \xymatrix{ \hocolim_{I\times_{/J} j} \iota_j^* X  \ar[r]^{\mu'} & \hocolim_{I\times_{/J} j'} \mu^* \iota_j^* X \ar@{=}[r] &  \hocolim_{I\times_{/J} j'}  \iota_{j'}^* X   } \]
It follows from (HC3) that this indeed defines a functor. 

Using the definition we can write down a (potential) unit and a (potential) counit of the envisaged adjunction:
The counit is given object-wise by the composition
\[ c: \xymatrix{ \hocolim_{I \times_{/J} j} \iota_j^* \alpha^*  \ar[r]^-{\nu^*} & \hocolim_{I \times_{/J} j} p_{I \times_{/J} j}^* j^* \ar[r]^-{p_{I \times_{/J} j}'} & \hocolim_{\{ \cdot \}} j^* \ar[r]^-{ \mathrm{can} } & j^*  } \]
and the unit is given by the composition
\[ u: \xymatrix{  \id & \ar[l]_-{\rho}  \id_{I,!} \ar[r]^-{\kappa} & \alpha^* \alpha_!  } \]
where $\kappa_i$ is given by
\[ \xymatrix{  \hocolim_{I\times_{/I} i} \iota_i^* \ar[r]^-{\widetilde{\alpha}'} & \hocolim_{I\times_{/J} \alpha(i)} \iota_{\alpha(i)}^* }    \]
where $\widetilde{\alpha}: I\times_{/I} i \rightarrow I\times_{/J} \alpha(i)$ denotes the obvious functor induced by $\alpha$,
and $\rho_i$ is given by the composition
\[ \xymatrix{  \hocolim_{I\times_{/I} i} \iota_i^* \ar[r]^-{\nu*} &  \hocolim_{I\times_{/I} i} p_{I \times_{/I} i}^* i^*  \ar[rr]^-{p_{I\times_{/I} i}'} &&  \hocolim_{\{\cdot\}} i^* \ar[r]^-{\mathrm{can}} & i^*.    }   \]
It follows from (HC3) that $c$, $\kappa$ and $\rho$ are indeed natural in $i$. 
Furthermore the diagram
\[ \xymatrix{ \hocolim_{\{\cdot\}} i^* \ar[r]^-{(\id_i)'} \ar@{=}[d] & \hocolim_{I\times_{/I} i} \iota_I^* \ar[d]^{\nu^*}   \\
 \hocolim_{\{\cdot\}} i^* \ar[r]^-{(\id_i)'} \ar@/_20pt/[rrr]_{\id} & \hocolim_{I\times_{/I} i} p_{I \times_{/I} i}^* i^* \ar[rr]^-{p_{I\times_{/I} i}'} & &    \hocolim_{\{\cdot\}} i^*    } \]
 is commutative. As $(\id_i)'$ is a weak equivalence by Lemma~\ref{LEMMAA3} it follows, using (HC1), that also $\rho_i$ is a weak equivalence.
 Therefore $\rho$ is a weak equivalence and can be inverted, and thus the definition of unit makes sense. 
\end{PAR}

 \begin{PAR}To get a valid adjunction, we have to verify the unit/counit equations. Without imposing further axioms they probably will not hold. 
 Let $I \in \Dia$, consider a functor $F: I \rightarrow \Dia$, and let $X \in \mathcal{C}^{\int F}$. It follows from (HC3) that the association 
 \[ i \mapsto \hocolim_{F(i)} X|_{F(i)} \]
 is functorial in $i$. 
 \end{PAR}
 
 \begin{DEF}\label{DEFFUNCTCALCHOCOLIMTRANSITIVE}
 A {\bf transitive} functorial calculus of homotopy colimits for $(\mathcal{C}, \mathcal{W})$ is as in Definition~\ref{DEFFUNCTCALCHOCOLIM} with
 \begin{enumerate} 
\item for each $I \in \Dia$ and functor $F: I \rightarrow \Dia$  a natural transformation
\[ \Xi_{F}: \hocolim_I \hocolim_{F(-)} \rightarrow \hocolim_{\int F}  \]
between functors $\mathcal{C}^{\int F} \rightarrow \mathcal{C}$,
\end{enumerate} 
satisfying in addition: 
\begin{enumerate} 
 \item[(HC4)] For any functor $\alpha: I \rightarrow J$ in $\Dia$ and a functor $F: J \rightarrow \Dia$ the following diagram commutes:
 \[ \xymatrix{ \hocolim_{I}  \alpha^* \hocolim_{F(\alpha(-))} \ar[d]_{\Xi_{\alpha^* F}} \ar[rr]^-{\alpha' } & & \hocolim_{J} \hocolim_{F(-)}  \ar[d]^{\Xi_{F}} \\
\hocolim_{\int \alpha^* F} (\alpha')^*     \ar[rr]_{(\alpha')'}  & &  \hocolim_{\int F}  } \]
Similarly, for $I \in \Dia$ and a natural transformation $\mu: F \Rightarrow G$ of functors $F, G: I \rightarrow \Dia$, the following diagram commutes:
 \[ \xymatrix{ \hocolim_{I} \hocolim_{F(-)} \mu(-)^*  \ar[d]_{\Xi_F} \ar[rr]^-{\hocolim_I \mu(-)'} & & \hocolim_{I} \hocolim_{G(-)}  \ar[d]^{\Xi_G} \\
 \hocolim_{\int F}  (\int \mu)^* \ar[rr]_-{(\int \mu)'}  & & \hocolim_{\int G}   } \]
\item[(HC5)] The morphisms  $\Xi$ and $\hocolim_{I} \circ \mathrm{can}$ 
 \[ \xymatrix{ \hocolim_{I} \hocolim_{\{\cdot\}}  \ar[r] & \hocolim_{I}  } \]
 are equal in $\Hom(\mathcal{C}^I, \mathcal{C})[\mathcal{W}_{\mathcal{C}^I}^{-1}]$ and similarly in the other order.  
\end{enumerate} 
\end{DEF}
 
 A transitive functorial calculus of homotopy colimits is a constructive device (without the need for model category structures) to obtain a left derivator: 
 \begin{SATZ}\label{THEOREMHOCOLIM}
Let $(\mathcal{C}, \mathcal{W})$ be a category with weak equivalences, assume that $\mathcal{W}$ is saturated, and let $I \mapsto \hocolim_I$ be a transitive functorial calculus of homotopy colimits. Then
$\hocolim_I$ is a homotopy colimit and 
\[ X \mapsto \alpha_! X := ( j \mapsto \hocolim_{I\times_{/J} j} \iota_j^* X ) \]
is a relative Kan extension satisfying Kan's formula. 
In other words, the pre-derivator
\[ \DD: I \mapsto \mathcal{C}^I [\mathcal{W}_I^{-1}]  \]
is a left derivator with domain $\Dia$ (in which the categories $\DD(I)$ are not necessarily locally small).
\end{SATZ}
 
 \begin{BEISPIEL}[Bousfield-Kan] \label{EXAMPLEBK}
 Let $(\mathcal{C}, \mathcal{W})$ be a category with weak equivalences with a functor\footnote{which need not necessarily be part of the structure of a simplicial category}
 \[  \otimes: \mathcal{SET}^{\Delta^{\op}} \times \mathcal{C} \rightarrow \mathcal{C}   \]
 (respectively restricted to finite simplicial sets, if $\Dia$ is the category of homotopically finite diagrams) with an ``Eilenberg-Zilber'' weak equivalence:
 \[ \rho_{Y,X,C}: Y \otimes (X \otimes C) \rightarrow (Y \times X) \otimes C  \]
 functorial in $Y, X$ and $C$, 
 and a weak equivalence
 \[  \epsilon_C: \Delta_0 \otimes C \rightarrow C \]
 functorial in $C$ such that $\rho$ and $\epsilon$ are compatible in the sense that
 \[ \xymatrix{ Y \otimes (\Delta_0 \otimes C) \ar@<3pt>[rr]^-{\rho_{Y, \Delta_0, C}} \ar@<-3pt>[rr]_-{Y \otimes (\epsilon_C)} & & Y \otimes C } \]
are equal in $\Fun(\mathcal{C}, \mathcal{C})[\mathcal{W}^{-1}_{\mathcal{C}}]$ and similarly in the other order.
 Then 
 \[ \hocolim_I(X) := \int^I N(- \times_{/I} I) \otimes X \]
 is a transitive calculus of homotopy colimits on $\Dia$ as in Definition~\ref{DEFFUNCTCALCHOCOLIM}, provided that 
 \begin{enumerate}
 \item the coend exists,
 \item $\hocolim_I$ maps object-wise weak equivalences to weak equivalences, and
 \item $(\id, 1)'$ (defined in the proof) is a weak equivalence, where $(\id, 1): I \times \Delta_0 \rightarrow I \times \Delta_1$. 
 (If $\rho$ consists of isomorphisms, then it suffices to check this for $I = \Delta_0$.)
 \end{enumerate}
 \end{BEISPIEL}
 \begin{proof}
The morphism $\alpha'$ is given by
 \[ \alpha': \int^I N(- \times_{/I} I) \otimes (\alpha^* X) \rightarrow \int^I ((\alpha^{\op})^* N(- \times_{/J} J)) \otimes (\alpha^* X) \rightarrow \int^J N(- \times_{/J} J) \otimes X  \]
 using the functoriality of the coend in $I$. 
 The morphism $\mathrm{can}$ is given by
 \[ \mathrm{can}: \xymatrix{  \int_{\{\cdot\}} N(\cdot) \otimes X = \Delta_0 \otimes X \ar[r]^-{\epsilon_X} & X  }   \]
 and the morphism $\Xi_F$ (transitivity) for a functor $F: I \rightarrow \Dia$ and $X: \int F \rightarrow \mathcal{C}$ is given by the following construction: 
 First we have, by definition, 
 \begin{eqnarray*}
  \int^I N(- \times_{/I} I) \otimes \int^{F(-)} N(- \times_{/F(-)} F(-)) \otimes X 
  &=& \colim_{\tw I \times_{I} \int \tw F} F(i \rightarrow i', \underbrace{j \rightarrow j'}_{\in \Mor(F(i))}) \\
   \int_{\int F} N(- \times_{/\int F} \int F) \otimes X &=& \colim_{\tw (\int F)} G(\alpha: i \rightarrow i', j \in F(i), \underbrace{F(\alpha)(j) \rightarrow j'}_{\in \Mor(F(i'))})
 \end{eqnarray*}
 with 
\begin{eqnarray*}
 F(i \rightarrow i', j \rightarrow j') &:=& N(i' \times_{/I} I) \otimes (N(j' \times_{/F(i)} F(i)) \otimes X(i,j))  \\
 G(\alpha: i \rightarrow i', j, F(\alpha)(j) \rightarrow j') &:=& N((i',j') \times_{/\int F} \int F) \otimes X(i,j). 
\end{eqnarray*}
 Note that neither $F$, nor $G$, depend on the morphisms, as always in the construction of coend as colimit over the twisted arrow category. 
 Consider the functor
 \begin{eqnarray*}
  \Omega: \tw I \times_{I} \int \tw F &\rightarrow& \tw (\int F)  \\
  (\alpha: i \rightarrow i', \underbrace{j \rightarrow j'}_{\in \Mor(F(i))}) & \mapsto &(\alpha:  i  \rightarrow i', j, F(\alpha)(j) \rightarrow F(\alpha)(j')).   
  \end{eqnarray*}
We define $\Xi_F$ by the morphism
 \[ F \rightarrow \Omega^* G \]
 given at an object $(\alpha: i \rightarrow i', j \rightarrow j') \in \tw I \times_{I} \int \tw F$ by the composition
 \begin{gather*} \xymatrix{ 
 & N(i' \times_{/I} I) \otimes \left(N(j' \times_{/F(i)} F(i)) \otimes X(i,j)\right)  
 \ar[r]^{\rho}  & \left(N((i' \times_{/I} I) \times (j' \times_{/F(i)} F(i)))\right) \otimes X(i,j)  } \\
 \xymatrix{ \ar[r] & N( (i', F(\alpha)(j')) \times_{/\int F} \int F) \otimes X(i,j) } \end{gather*}
where the first map is the Eilenberg-Zilber map and the second is induced by the following functor
\begin{eqnarray*}
  (i' \times_{/I} I) \times (j' \times_{/F(i)} F(i)) &\rightarrow& (i',F(\alpha)(j')) \times_{/\int F} \int F \\
  (\beta: i' \rightarrow i'_2, j' \rightarrow j_2') &\mapsto& (i' \rightarrow i'_2, F(\alpha)(j'), F(\beta\alpha)(j') \rightarrow F(\beta\alpha)(j_2')).
\end{eqnarray*}
This functor is independent of $\alpha$ and an isomorphism, if $F$ is constant. In that case also $\Omega$ is an isomorphism. By consequence $\Xi_F$ is an isomorphism if $F$ is constant and the Eilenberg-Zilber map is an isomorphism. 
The verification of the axioms (HC1)--(HC5) is left to the reader. 
 \end{proof}

\begin{LEMMA}For a transitive functorial calculus of homotopy colimits, (HC2) is, in the presence of the the other axioms, equivalent to:
\begin{enumerate} 
\item[(HC2')] $\Xi_{I,\Delta_1}: \hocolim I \hocolim \Delta_1 \rightarrow \hocolim_{I \times \Delta_1}$ is an object-wise weak equivalence, and for $1: \Delta_0 \hookrightarrow \Delta_1$, $1'$ is an object-wise weak equivalence.
\end{enumerate} 
\end{LEMMA}
\begin{proof}
 We have a commutative diagram
 \[ \xymatrix{ \hocolim_I \hocolim_{\Delta_0} \ar[r]^-{1'} \ar[d]_{\Xi_{I,\Delta_0}} & \hocolim_I \hocolim_{\Delta_1}  \ar[d]^{\Xi_{I,\Delta_1}} \\
  \hocolim_I \ar[r]_-{(\id,1)'} & \hocolim_{I \times \Delta_1}
 } \]
By (HC1) and (HC5) the left vertical morphism is an object-wise weak equivalence. If (HC2) holds then the top and bottom horizontal morphisms are object-wise weak equivalences for all $I$.
Hence also the right vertical morphism is an object-wise weak equivalence for all $I$ which implies (HC2'). If (HC2') holds then the right vertical and top horizontal morphisms are weak equivalences for all $I$. Thus
also the bottom horizontal morphism is an object-wise weak equivalence, i.e.\@ (HC2) holds. 
\end{proof}

\begin{proof}[Proof of Theorem~\ref{THEOREMHOCOLIM}.]
(Der1) is clear and (Der2) follows from the saturatedness of $\mathcal{W}$.

We have to check the unit/counit equations.
The first equation asserts that the composition
\[ \xymatrix{ \alpha^* \ar[rr]^-{u \alpha^*} & & \alpha^* \alpha_! \alpha^* \ar[rr]^-{\alpha^* c} & &  \alpha^*  }\]
 is object-wise the identity in $\Fun(I, \mathcal{C})[\mathcal{W}^{-1}_I]$. We will use the abbreviation 
\[ \hocolimshort_{I} := \hocolim_{I}. \]
 Inserting the definitions, it suffices to show that the composition
\[ \footnotesize \xymatrix{ \hocolimshort_{\{\cdot\}} \alpha(i)^* & \ar[l]_-{p_{I \times_{/I} i}'} \hocolimshort_{I \times_{/I} i} p_{I \times_{/I} i}^* \alpha(i)^* \ar@/_40pt/[rrr]_-{\widetilde{\alpha}'}  & \ar[l]_-{\nu^*} \hocolimshort_{I \times_{/I} i} \iota_i^* \alpha^* \ar[r]^-{\widetilde{\alpha}'} & \hocolimshort_{I \times_{/J} \alpha(i)} \iota_{\alpha(i)}^* \alpha^* \ar[r]^{\nu^*} & \hocolimshort_{I \times_{/J} \alpha(i)} p_{I \times_{/J} \alpha(i)}^* \alpha(i)^* \ar[r]^-{p_{I \times_{/J} j}'} & \hocolimshort_{\{\cdot\}} \alpha(i)^*   } \]
is the identity for all $i$. This follows from the functoriality of $\hocolim$ and (HC3). The second unit/counit equation asserts that the composition
\[ \xymatrix{  \alpha_! \ar[rr]^-{\alpha_! u} & &  \alpha_! \alpha^* \alpha_! \ar[rr]^-{c \alpha_!} & &  \alpha_!  }\]
is object-wise the identity in $\Fun(J, \mathcal{C})[\mathcal{W}^{-1}_J]$. Inserting the definitions, it suffices to show that the composition in the top row of Figure~\ref{fig:unitcounit} is the identity in $\Fun(\mathcal{C}^I, \mathcal{C})[\mathcal{W}_{\mathcal{C}^I}^{-1}]$. 
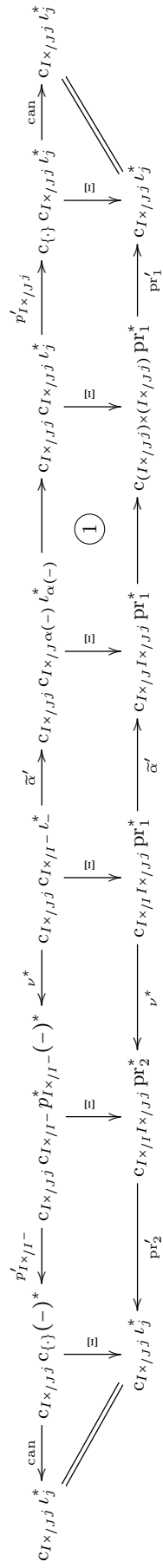
\begin{sidewaysfigure}
    \centering
\[ \footnotesize
 \xymatrix{ 
\hocolimshort_{I \times_{/J} j} \iota_j^* \ar@{=}[rd] &
 \ar[l]_-{\mathrm{can}} \hocolimshort_{I \times_{/J} j} \hocolimshort_{\{\cdot\}} (-)^* \ar[d]^-{\Xi} & 
 \ar[l]_-{p_{I \times_{/I} -}'} \hocolimshort_{I \times_{/J} j} \hocolimshort_{I \times_{/I} -} p_{I \times_{/I} -}^* (-)^*  \ar[d]^{\Xi}  & 
 \ar[l]_-{\nu^*} \hocolimshort_{I \times_{/J} j} \hocolimshort_{I \times_{/I} -} \iota_{-}^*  \ar[r]^-{\widetilde{\alpha}'}  \ar[d]^{\Xi}  & 
 \hocolimshort_{I \times_{/J} j} \hocolimshort_{I \times_{/J} \alpha(-)} \iota_{\alpha(-)}^* \ar@{}[rd]|{\ccircled{1}}  \ar[r]  \ar[d]^{\Xi}  & 
 \hocolimshort_{I \times_{/J} j} \hocolimshort_{I \times_{/J} j} \iota_j^* \ar[r]^-{p_{I \times_{/J} j}'}  \ar[d]^{\Xi}  & 
 \hocolimshort_{\{\cdot\}} \hocolimshort_{I \times_{/J} j} \iota_j^* \ar[r]^-{\mathrm{can}} \ar[d]^{\Xi}  &
  \hocolimshort_{I \times_{/J} j} \iota_j^* \ar@{=}[ld]  \\
& 
 \hocolimshort_{I \times_{/J} j} \iota_j^* &
 \hocolimshort_{I \times_{/I} I \times_{/J} j} \pr_2^*  \ar[l]^-{\pr_{2}'}  &
 \hocolimshort_{I \times_{/I} I \times_{/J} j} \pr_1^* \ar[l]^-{\nu^*}  \ar[r]_-{\widetilde{\alpha}'}  & 
 \hocolimshort_{I \times_{/J} I  \times_{/J} j} \pr_1^*  \ar[r]  & 
 \hocolimshort_{(I \times_{/J} j) \times (I  \times_{/J} j)} \pr_1^*  \ar[r]_-{\pr_{1}'}  & 
 \hocolimshort_{I \times_{/J} j}  \iota_j^*  
}  \]
    \caption{The second unit/counit equation}
    \label{fig:unitcounit}
\end{sidewaysfigure}
It follows from (HC4) that all squares in Figure~\ref{fig:unitcounit} are commutative. We illustrate this for the perhaps most involved square marked as $\ccircled{1}$ in the figure:
\[
 \xymatrix{  
 \hocolimshort_{I \times_{/J} j} \hocolimshort_{I \times_{/J} \alpha(-)} \iota_{\alpha(-)}^*  \ar[rr]^-{\hocolimshort_{I \times_{/J} j} (\widetilde{\beta}')_{\beta} }  \ar[d]_{\Xi}  & & 
 \hocolimshort_{I \times_{/J} j} \hocolimshort_{I \times_{/J} j} \iota_j^* \ar[d]^{\Xi} \\
 \hocolimshort_{I \times_{/J} I  \times_{/J} j} \pr_1^*  \ar[rr]_-{(\int \mu)'}  & & 
 \hocolimshort_{(I \times_{/J} j) \times (I  \times_{/J} j)} \pr_1^*   
}
\]
Its commutativity is (HC4) for the morphisms of functors $\mu: F \Rightarrow G$, where $F: I \times_{/J} j \rightarrow \Dia$ is the functor mapping
$(i, \beta: \alpha(i) \rightarrow j)$ to $I \times_{/J} \alpha(i)$ and $G: I \times_{/J} j \rightarrow \Dia$ is the constant functor
$I \times_{/J} j$ and $\mu: F \Rightarrow G$ at $(i, \beta: \alpha(i) \rightarrow j)$ is given by the functor ``composition'' $\widetilde{\beta}: I \times_{/J} \alpha(i) \rightarrow I \times_{/J} j$.

Since all squares in Figure~\ref{fig:unitcounit} commute in $\Hom(\mathcal{C}^I, \mathcal{C})[\mathcal{W}_{\mathcal{C}^I}^{-1}]$, 
 we are reduced to show that the two morphisms (composition all the way to the left, and to the right, respectively)
\[ \hocolimshort_{I \times_{/I} I \times_{/J} j} \pr_1^* \rightarrow \hocolimshort_{I \times_{/J} j} \iota_j^* \] 
 are the same in $\Hom(\mathcal{C}^{I}, \mathcal{C})[\mathcal{W}_{\mathcal{C}^{I}}^{-1}]$. By Lemma~\ref{LEMMAA2} applied to the canonical natural transformation
 \[ \pr_{13} \Rightarrow \pr_{23}:  I \times_{/I} I \times_{/J} j \rightarrow I \times_{/J} j \]
 this is indeed the case.
This shows (Der3), and (Der4) holds by construction.  
\end{proof}

At several places the  following folklore Lemma was used: 
\begin{LEMMA} \label{LEMMAEND}
Let $F: I^{\op} \times J \rightarrow \mathcal{C}$ be a functor, and $\alpha: I \rightarrow J$. Then we have
\[ \int_J (\alpha^{\op}, \id)_* F \cong \int_I (\id, \alpha)^* F    \]
in the sense that, if one end exists, so does the other, and we have a canonical isomorphism. Similarly for the coend: 
\[ \int^J (\alpha^{\op}, \id)_! F \cong \int^I (\id, \alpha)^* F.    \]
\end{LEMMA}

\newpage
\bibliographystyle{abbrvnat}
\bibliography{6fu}

\end{document}